\newtheorem{thm}{Theorem}[section]
 \newtheorem{cor}{Corollary}[section]
 \newtheorem{prop}{Proposition}[section]
 \newtheorem{defn}{Definition}[section]
\newtheorem{rem}{Remark}[section]
\def\Id{{\rm Id}\,}
\def\d{\partial}
\def\ddj{\dot \Delta_j}
\def\ddk{\dot \Delta_k}
\def\tilde{\widetilde}
\def\wt{\widetilde}
\newcommand\R{\mathbb{R}}
\newcommand\Z{\mathbb{Z}}
\newcommand{\N}{\mathbb{N}}
\newcommand{\ep}{\varepsilon}
\newcommand{\Supp}{\hbox{Supp}\,}
\renewcommand{\div}{\mbox{\rm div}\;\!}
\def\cA{{\mathcal A}}
\def\cC{{\mathcal C}}
\def\cD{{\mathcal D}}
\def\cF{{\mathcal F}}
\def\cP{{\mathcal P}}
\def\cQ{{\mathcal Q}}
\def\cS{{\mathcal S}}
\def\cX{{\mathcal X}}
\newcommand{\Int}{\displaystyle \int}
\newcommand{\Frac}{\displaystyle \frac}
\newcommand{\Sup}{\displaystyle \sup}
\begin{document}
\title[Decay estimates for the compressible Navier-Stokes equations]{Optimal time-decay estimates for the compressible Navier-Stokes equations in the critical $L^{p}$ framework}
\author{Rapha\"el Danchin}
\address{Universit\'{e} Paris-Est,  LAMA (UMR 8050), UPEMLV, UPEC, CNRS, 61 avenue du G\'en\'eral de Gaulle, 94010 Cr\'eteil Cedex 10}
\email{danchin@univ-paris12.fr}
\thanks{The first author is supported by ANR-15-CE40-0011 and by the  Institut Universitaire de France.}

\author{Jiang Xu}
\address{Department of Mathematics,  Nanjing
University of Aeronautics and Astronautics,
Nanjing 211106, P.R.China,}
\email{jiangxu\underline{ }79math@yahoo.com}
\thanks{The second author  is partially supported by the National
Natural Science Foundation of China (11471158), the Program for New Century Excellent
Talents in University (NCET-13-0857) and the Fundamental Research Funds for the Central
Universities (NE2015005). He would like to thank Professor A. Matsumura for  introducing  him
to  the decay problem for partially parabolic equations when he visited Osaka University.  He is also grateful to
Professor R. Danchin for his kind hospitality when visiting the LAMA in UPEC}

\subjclass{76N15, 35Q30, 35L65, 35K65}
\keywords{Time decay rates; Navier-Stokes equations; critical spaces;  $L^p$ framework.}

\begin{abstract}
The global existence issue for the isentropic compressible
Navier-Stokes equations in the critical regularity framework has been  addressed in \cite{D0} more than
fifteen years ago.
However,   whether (optimal) time-decay rates
could be shown in general critical spaces and any dimension $d\geq2$  has remained an open question.
Here we give a positive answer to that issue not only in the $L^2$ critical framework of \cite{D0} but also in the more general
 $L^p$ critical framework  of \cite{CD,CMZ2,H2}.
 More precisely, we show that under a mild additional decay assumption
 that is satisfied if the low frequencies of the initial data are in e.g. $L^{p/2}(\R^d)$, the $L^p$ norm
(the slightly stronger $\dot B^0_{p,1}$ norm in fact) of the critical global solutions
 decays like $t^{-d(\frac 1p-\frac14)}$ for $t\to+\infty,$
exactly as firstly observed by A. Matsumura and T. Nishida in \cite{MN1} in the  case $p=2$ and $d=3,$
 for  solutions with high  Sobolev regularity.

Our method relies on refined time weighted inequalities in the Fourier space, and
is likely to be effective   for other  hyperbolic/parabolic systems that are encountered
in fluid mechanics or mathematical physics.
\end{abstract}

\maketitle

\section{Introduction}\setcounter{equation}{0}
In Eulerian coordinates, the motion of a general barotropic compressible fluid in the whole space $\R^d$ is governed by the
following Navier-Stokes system:
\begin{equation}\label{R-E1}
\left\{\begin{array}{l}
\partial_t\varrho+\div(\varrho u)=0,\\[1ex]
\partial_t(\varrho u)+\div(\varrho u\otimes u)-\div\bigl(2\mu D(u)+\lambda\,\div u\, \Id\bigr)+\nabla\Pi=0.
\end{array}\right.
\end{equation}
Here $u=u(t,x)\in \mathbb{R}^{d}$ (with  $(t,x)\in \mathbb{R}_{+}\times \mathbb{R}^{d}$)  stands for the velocity field
and $\varrho=\varrho(t,x)\in \mathbb{R}_{+}$ is the density. The barotropic assumption means that
 $\Pi\triangleq P(\varrho)$ for some given function $P$ (that will be taken suitably smooth in all that follows).
 The notation  $D(u)\triangleq\frac12(D_x u+{}^T\!D_x u)$ stands for  the {\it deformation tensor}, and $\div$
 is the divergence operator with respect to the space variable.
 The  density-dependent functions $\lambda$ and $\mu$  (the  \emph{bulk} and \emph{shear viscosities})
 are supposed to be smooth enough and to satisfy
 \begin{equation}\label{eq:pos-visc}
 \mu > 0\quad\hbox{and}\quad \nu\triangleq\lambda+2\mu>0.
 \end{equation}
System \eqref{R-E1}
is supplemented with initial data
\begin{equation}\label{R-E2}
(\varrho,u)|_{t=0}=(\varrho_0,u_0),
\end{equation}
and we focus on solutions going to some constant state $(\varrho_\infty,0)$ with $\varrho_\infty>0,$ at infinity.
\medbreak
As in many works dedicated to nonlinear evolutionary PDEs, \emph{scaling invariance} 
will play a fundamental role in our paper. The reason why is that  whenever such an invariance exists,
  suitable critical quantities (that is, having the
same scaling invariance as the system under consideration) control the possible finite time blow-up, and
the global existence of strong solutions.
In our situation, we observe that  \eqref{R-E1} is invariant by the transformation
 \begin{equation}\label{scaling}
\varrho(t,x)\leadsto \varrho(\ell ^2t,\ell x),\quad
u(t,x)\leadsto \ell u(\ell^2t,\ell x), \qquad\ell>0,
\end{equation}
up to a change of the pressure term $\Pi$ into $\ell^2\Pi.$
Therefore we expect critical norms or spaces for investigating \eqref{R-E1}, to have the scaling invariance \eqref{scaling}.
\medbreak
 As observed  by the first author in \cite{D0}, one may solve
  \eqref{R-E1} in critical  homogeneous Besov spaces of type $\dot B^s_{2,1}$
  (see Def. \ref{d:besov}). In that context, in accordance with \eqref{scaling}, with the scaling properties of Besov spaces (see \eqref{eq:scaling} below) and with the conditions at $\infty,$   criticality  means that
   $a_0\triangleq\varrho_0-\varrho_\infty$ and $u_0$ have to be taken in  $\dot B^{d/2}_{2,1}$ and  $\dot B^{d/2-1}_{2,1},$ respectively.
Besides smallness however, in order to achieve \emph{global existence},
an additional  condition has to be prescribed
on the low frequencies  of the density. This  comes from   the fact that the scaling invariance in \eqref{scaling} modifies the (low order) pressure term.
Schematically,  in the low-frequency regime, the first order terms of \eqref{R-E1}  predominate and  hyperbolic energy methods
are thus expected to be appropriate. In particular it is suitable to work at the same level of regularity for $a\triangleq\varrho-\varrho_\infty$ and $u,$ that is $\dot B^{\frac d2-1}_{2,1}$ (the influence of the viscous term $\cA u$ is decisive, though,  as it supplies parabolic decay estimates for both $a$ and $u$ in low frequencies). To handle the  high-frequency part of the solution,
the main difficulty comes from the convection term in the density equation, as it may cause a   loss of one derivative. This is  overcome 
in \cite{D0} by  performing  an energy method on the mixed type system \eqref{R-E1} after spectral localization.

 The result of \cite{D0} has been
  extended to  Besov spaces that are not related to $L^2.$  The original proof
 of  \cite{CD} and \cite{CMZ2}  relies on a paralinearized version
of System \eqref{R-E1} combined with a Lagrangian change of variables after spectral localization.
In a recent paper \cite{H2}, B. Haspot achieved essentially the same  result  by means of   a more elementary approach
which is based on the use of Hoff's viscous effective flux \cite{Hoff} (see also   \cite{DH} for  global
results in  more general spaces, in the density dependent viscosity coefficients case).
This eventually leads to the following statement\footnote{Throughout the paper  $z^\ell$ and $z^h$
designate the low and high frequency parts of any  tempered distribution $z,$ that is
$\cF(z^\ell)\triangleq \psi\,\cF z$ and $z^h\triangleq z-z^\ell$ where $\psi$ is a suitable smooth
compactly supported function, equal to  $1$ in a neighborhood of $0.$}:
\begin{thm}\label{thm1.1} Let $d\geq2$ and  $p$ satisfying
\begin{equation}\label{eq:p}
2\leq p \leq \min(4,2d/(d-2))\quad\hbox{and, additionally, }\  p\not=4\ \hbox{ if }\ d=2.\end{equation}
Assume  that $P'(\varrho_\infty)>0$ and that \eqref{eq:pos-visc} is fulfilled.  There exists a  constant $c=c(p,d,\lambda,\mu,P,\varrho_\infty)$ such that
if    $a_0\triangleq \varrho_0-\varrho_\infty$ is in $\dot B^{\frac d{p}}_{p,1},$
 if $u_0$ is in $\dot B^{\frac d{p}-1}_{p,1}$  and if in addition   $(a_0^\ell,u_0^\ell)\in\dot B^{\frac d2-1}_{2,1}$
   with
    \begin{equation}\label{R-E4}
\cX_{p,0}\triangleq \|(a_0,u_0)\|^\ell_{\dot B^{\frac d2-1}_{2,1}}+\|a_0\|^h_{\dot B^{\frac dp}_{p,1}}
+\|u_0\|^h_{\dot B^{\frac d{p}-1}_{p,1}}\leq c\end{equation}
then \eqref{R-E1}  has a unique global-in-time  solution $(\varrho,u)$  with  $\varrho=\varrho_\infty+a$ and
$(a,u)$ in the space $X_p$ defined by\footnote{The subspace $\,\wt\cC_b(\R_+;\dot B^{s}_{q,1})\,$ of $\,\cC_b(\R_+;\dot B^{s}_{q,1})\,$
is defined in \eqref{eq:tilde}, and the norms $\|\cdot\|_{\wt L^\infty(\dot B^s_{p,1})}$
are introduced just below Definition \ref{d:besov}.}:
$$\displaylines{
(a,u)^\ell\in \wt\cC_b(\R_+;\dot B^{\frac d2-1}_{2,1})\cap  L^1(\R_+;\dot B^{\frac d2+1}_{2,1}),\quad
a^h\in \wt\cC_b(\R_+;\dot B^{\frac dp}_{p,1})\cap L^1(\R_+;\dot B^{\frac dp}_{p,1}),
\cr u^h\in  \wt\cC_b(\R_+;\dot B^{\frac dp-1}_{p,1})
\cap L^1(\R_+;\dot B^{\frac dp+1}_{p,1}).}
$$

Furthermore, we have for some constant $C=C(p,d,\lambda,\mu,P,\varrho_\infty),$
\begin{equation}\label{R-E5}
\cX_p\leq C \cX_{p,0},
\end{equation} with
\begin{multline}\label{eq:Xp}
 \cX_{p}\triangleq\|(a,u)\|^{\ell}_{\wt L^\infty(\dot B^{\frac d2-1}_{2,1})}+\|(a,u)\|^{\ell}_{L^1(\dot B^{\frac d2+1}_{2,1})}
\\+\|a\|^{h}_{\wt L^\infty(\dot B^{\frac dp}_{p,1})\cap L^1(\dot B^{\frac dp}_{p,1})}
+\|u\|^{h}_{\wt L^\infty(\dot B^{\frac dp-1}_{p,1})\cap L^1(\dot B^{\frac dp+1}_{p,1})}.
\end{multline}
\end{thm}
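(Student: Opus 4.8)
The plan is to combine a by-now classical fixed-point construction for local-in-time solutions with a global-in-time a priori estimate proved by a frequency-localized energy method, and then to close the argument by a continuity/bootstrap scheme that exploits the smallness of $\cX_{p,0}$.

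For the local existence and uniqueness in $X_p$, I would recast \eqref{R-E1} in terms of $(a,u)$ with $a\triangleq\varrho-\varrho_\infty$ as a transport equation for $a$ coupled to a constant-coefficient Lam\'e-type parabolic system for $u$, the right-hand sides being at least quadratic in $(a,u)$ and their first derivatives. One then applies the $L^1_T$ maximal-regularity estimate for the Lam\'e operator (gain of two derivatives), the $\wt L^\infty_T$--$L^1_T$ estimates for the transport equation, and the tame product and composition laws in homogeneous Besov spaces, and closes a contraction on a short time interval. The one genuinely delicate point, already isolated in \cite{D0}, is the potential loss of one derivative coming from the convection term $u\cdot\nabla a$ in the density equation; this forces $a^h$ to be measured at the higher regularity $\dot B^{d/p}_{p,1}$, and the worst term is tamed either by a $j$-localized energy estimate on the mixed system (as in \cite{D0,CD,CMZ2}) or, following \cite{H2}, by working with Hoff's effective velocity, schematically $w\triangleq u+\nabla(-\Delta)^{-1}a$ up to a fixed multiplicative constant, which obeys a genuinely damped equation for the compressible part.

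The core of the proof is the global bound \eqref{R-E5}, and there the low- and high-frequency regimes must be handled differently. On low frequencies, Bernstein's inequality makes all Lebesgue exponents equivalent up to a fixed constant, so one works in the $L^2$ framework of \cite{D0}: after localizing in frequency and using the assumption $P'(\varrho_\infty)>0$, the linearized part of the system is symmetric-hyperbolic in $(a,u)$ at first order but regularized by $\cA u$; a Matsumura--Nishida-type Lyapunov functional combining $\|\ddj a\|_{L^2}^2$, $\|\ddj u\|_{L^2}^2$ and the cross term $\langle\ddj u\,|\,\nabla\ddj a\rangle$ yields, upon summation with the weights $2^{j(\frac d2-1)}$, the parabolic-type control of $(a,u)^\ell$ in $\wt L^\infty_t(\dot B^{\frac d2-1}_{2,1})\cap L^1_t(\dot B^{\frac d2+1}_{2,1})$ by the data and the nonlinear source. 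On high frequencies, the viscous term acts directly on $u$ and, through the effective flux (or the localized energy method), produces an exponential damping of $a^h$; this gives a closed estimate for $a^h$ in $\wt L^\infty_t(\dot B^{\frac dp}_{p,1})\cap L^1_t(\dot B^{\frac dp}_{p,1})$ and for $u^h$ in $\wt L^\infty_t(\dot B^{\frac dp-1}_{p,1})\cap L^1_t(\dot B^{\frac dp+1}_{p,1})$, again modulo nonlinear terms.

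It then remains to bound the nonlinear source terms -- products of type $a\,\div u$, $u\cdot\nabla a$, $a\,\cA u$ and composition terms $I(a)\cA u$, $k(a)\nabla a$ with $I,k$ smooth and vanishing at $0$ -- in the spaces above. Here the restrictions \eqref{eq:p} on $p$ are used: they guarantee $\dot B^{d/p}_{p,1}\hookrightarrow L^\infty$, that the regularity gap $\frac d2-\frac dp$ stays within the range for which the mixed $L^2$-low/$L^p$-high product estimates hold, and that the low-frequency $L^2$ norms control the nonlinear interactions arising from both regimes. Every such term is at least quadratic, so the a priori inequality reads schematically $\cX_p\le C\cX_{p,0}+C\cX_p^2$; combined with the local existence statement and a standard continuity argument on the maximal existence time, this yields $\cX_p\le 2C\cX_{p,0}$ for all $t\ge0$ provided $\cX_{p,0}\le c$ with $c$ small enough, which gives both global existence and \eqref{R-E5}. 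The main obstacle throughout is the high-frequency density analysis: the loss of one derivative in $u\cdot\nabla a$ is defeated only through the pressure coupling with the momentum equation, which -- after passing to the effective velocity, or via the $j$-localized energy estimate -- turns into a damping of the high frequencies of $a$; obtaining this damping with constants uniform in the frequency parameter, in the $L^p$ rather than $L^2$ setting where integration by parts is not freely available, is the real technical heart, the rest being Besov product estimates and a routine bootstrap.
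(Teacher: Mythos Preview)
Your outline is correct and, in particular, your identification of the main obstacle (the derivative loss from $u\cdot\nabla a$ in the density equation, defeated either by a $j$-localized energy method on the coupled system or by Haspot's effective velocity) is exactly right. Note however that the paper does \emph{not} actually prove Theorem~\ref{thm1.1}: it is quoted from \cite{CD,CMZ2,H2} and taken for granted (see the sentence opening Section~\ref{sec:3}). So there is no ``paper's own proof'' to compare against; what can be said is that your sketch is faithful to those references, and that the effective-velocity mechanism you describe is precisely the one the paper itself recycles in Step~2 of the proof of Theorem~\ref{thm2.1} (introducing $w=\nabla(-\Delta)^{-1}(a-\div u)$ and showing that $a$ then satisfies a damped transport equation while $w$ and $\cP u$ satisfy heat equations).

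One small clarification: the constraints \eqref{eq:p} on $p$ are used not only to guarantee $\dot B^{d/p}_{p,1}\hookrightarrow L^\infty$ (which holds for all finite $p$) but more crucially to make the product laws close between the $L^2$-based low-frequency norms and the $L^p$-based high-frequency norms --- concretely, the condition $p\leq 2d/(d-2)$ ensures $\frac dp-1\geq -\frac d{p'}$ so that the paraproduct/remainder estimates needed for terms like $a\,\cA u$ do not fall outside the admissible range, and $p\leq4$ keeps $s_0=\frac{2d}p-\frac d2\geq0$ so that the low-frequency embedding $\dot B^{\frac d2-1}_{2,1}\hookrightarrow\dot B^{\frac dp-1}_{p,1}$ goes the right way.
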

%\begin{rem} The global statement obtained in \cite{CD} is slightly different : there,  it is just assumed that
%$2\leq p<2d,$ but the additional condition that
%$a_0$ and $u_0$ are small in $\dot B^s_{2,\infty}\cap\dot B^{s+1}_{2,\infty}$ and $\dot B^{s}_{2,\infty},$ respectively,
%for some $s$ in $(-\min(1,d/p),d/2-1),$ is required.\end{rem}

One may wonder how the global strong solutions constructed above  look like for large time.
Although providing an accurate  long-time asymptotic description  is still out of reach,
a number of results concerning the time decay rates of smooth global  solutions
-- sometimes referred to as $L^q-L^r$ decay estimates -- are available.
In this direction, the first achievement is due to  Matsumura and Nishida \cite{MN1,MN2} in the 80ies.
There, in the 3D case, the authors    proved the global  existence of classical solutions to  \eqref{R-E1}
supplemented with data $(\varrho_0,u_0)$  which are small perturbations in $L^1(\mathbb{R}^3)\times H^3(\mathbb{R}^3)$
of $(\varrho_\infty,0),$ and established  the following fundamental decay  estimate:
\begin{equation} \label{R-E6a}\|(\varrho-\varrho_{\infty},u)(t)\|_{L^2}\leq C\langle t\rangle^{-\frac{3}{4}}\quad\hbox{with }\
\langle t\rangle\triangleq \sqrt{1+t^2}.\end{equation}

The decay rate in \eqref{R-E6a} (which is the same as  for the heat equation with data in $L^1(\R^3)$) turns out to be
the  optimal one  for the linearized system \eqref{R-E1} about~$(\varrho_\infty,0).$
For that reason, it is  often  referred to as the optimal time-decay rate.

Shortly after Matsumura and Nishida,  still for data with high Sobolev regularity,
Ponce obtained in  \cite{P}  the following optimal $L^{p}$ decay rates for \eqref{R-E1}:
\begin{equation}
\|\nabla^{k}(\varrho-\varrho_{\infty},u)(t)\|_{L^p}\leq C\langle t\rangle^{-\frac{d}{2}(1-\frac{1}{p})-\frac k2}, \ \ \ 2\leq p\leq \infty, \ \ 0\leq k\leq 2,\ \ d=2,3. \label{R-E6}
\end{equation}
Similar results have  been established in some  situations
where the fluid domain is not $\R^d$: the half-space or exterior domain cases have  been investigated by   Kagei and Kobayashi in \cite{KK1,KK2}, Kobayashi in \cite{K},
and Shibata and Kobayashi in \cite{KS}.
\smallbreak
To find out  what kind  of asymptotic behavior is likely to
be true for  the global strong solutions of  the compressible Navier-Stokes equations constructed above, 
 it is natural to first investigate the decay  properties
 of the  linearized system \eqref{R-E1} about $(\varrho_\infty,0).$  As observed by different authors,
this is strongly connected to  the information given by  wave propagation. In that respect,
one may mention the work by  Zeng in \cite{Z} dedicated  the one-dimensional case, 
and the detailed analysis of the Green function for the multi-dimensional case carried out by Hoff and Zumbrun in  \cite{HZ1,HZ2}, 
that leads to  $L^p$ decay rates
  towards diffusion waves that are the  same  as in \eqref{R-E6}.   In \cite{LW}, Liu and Wang exhibited
   pointwise estimates of  diffusion waves with the optimal time-decay rate in odd dimension (as having Huygens's principle
 plays an important role  therein).
Let us finally mention the recent work by   Guo and Wang in \cite{GW} that
uses homogeneous Sobolev norms  of \emph{negative order} and allows  to get  optimal rates
without resorting to time-decay properties of the linear system.

%%%%%%%%%%%%%%%%%%%%%%%%%%%%%%%%

\section{Main results}

Let us emphasize that all the aforementioned  works concern solutions with \emph{high Sobolev regularity.}
The optimal time-decay estimates issue for \eqref{R-E1} in the critical regularity framework has been addressed only very recently,
by Okita in \cite{O}. There, thanks to a smart modification of the method
of \cite{D0}, Inequality  \eqref{R-E6} with $k=0$ is proved in the $L^2$ critical framework in dimension $d\geq3$
provided the data are additionally in some 
superspace of $L^1$. In the survey paper \cite{D8}, the first author proposed  another description of the
time decay which allows to handle  dimension $d\geq2$ in the $L^2$ critical framework.
\smallbreak
Our aim here is to develop the method of \cite{D8} so as to establish optimal decay results \emph{in the general $L^p$ critical framework of Theorem \ref{thm1.1} and  in  any dimension $d\geq2.$} As a by-product, we shall actually
obtain a very accurate description of the decay rates, not only for Lebesgue spaces, but also
for a full family of  Besov norms with negative or positive  regularity indices.
\medbreak
Before writing out the main statement of our paper, we need to introduce  some notation and definition.
To start with, we need  a  \emph{Littlewood-Paley decomposition}.
To this end,  we fix some  smooth radial non increasing function $\chi$
supported in $B(0,\frac 43)$ and with value $1$ on $B(0,\frac34),$ then set
$\varphi(\xi)=\chi(\xi/2)-\chi(\xi)$ so that
$$
\qquad\sum_{k\in\Z}\varphi(2^{-k}\cdot)=1\ \hbox{ in }\ \R^d\setminus\{0\}
\quad\hbox{and}\quad \Supp\varphi\subset \big\{\xi\in\R^d : 3/4\leq|\xi|\leq8/3\big\}\cdotp
$$
The homogeneous dyadic blocks $\ddk$ are defined by
$$\ddk u\triangleq\varphi(2^{-k}D)u=\cF^{-1}(\varphi(2^{-k}\cdot)\cF u)=2^{kd}h(2^k\cdot)\star u
\quad\hbox{with}\quad h\triangleq\cF^{-1}\varphi.
$$
The Littlewood-Paley decomposition of a general  tempered distribution $f$ reads
\begin{equation}\label{eq:decompo}
f=\sum_{k\in\Z}\ddk f.
\end{equation}
As it holds  only modulo polynomials, it is convenient to consider only 
tempered distributions $f$ such that
\begin{equation}\label{eq:Sh}
\lim_{k\rightarrow-\infty}\|\dot S_kf\|_{L^\infty}=0,
\end{equation}
where $\dot S_kf$ stands for the low frequency cut-off defined by $\dot S_kf\triangleq\chi(2^{-k}D)f$.
Indeed, for those distributions, \eqref{eq:decompo} holds true in $\cS'(\R^d).$ 
%Let us emphasize that  whenever $f\in \cS'_h$, one may write
%$\dot S_jf=\sum_{k\leq j-1}\ddk f$ and thus \eqref{eq:decompo} holds true.
\medbreak
Let us now turn to the definition of the main functional spaces and norms that will come into play in our paper. 
\begin{defn}\label{d:besov}
 For $s\in\R$ and
$1\leq p,r\leq\infty,$  the homogeneous Besov space $\dot B^s_{p,r}$ is  the
set of tempered distributions $f$ satisfying \eqref{eq:Sh} and
$$
\|f\|_{\dot B^s_{p,r}}\triangleq\Bigl\|\Bigl(2^{ks}\|\ddk  f\|_{L^p}\Bigr)\Bigr\|_{\ell^r(\Z)}<\infty.
$$
\end{defn}
In the case where $f$ depends also on the time variable, we shall often consider
the subspace $\wt L^\infty_T(\dot B^s_{p,1})$ of those functions of $L^\infty(0,T;\dot B^s_{p,1})$ such that
$$
\|f\|_{\wt L^\infty_T(\dot B^s_{p,1})}\triangleq\sum_{k\in\Z} 2^{ks}\sup_{t\in[0,T]}\|\ddk  f(t,\cdot)\|_{L^p}<\infty.
$$
Restricting the above norms to the low or high frequencies parts of distributions will be
fundamental in our approach. To this end,  we fix some suitable  integer $k_0$ (the value of which will follow from the proof of the main theorem)
and put\footnote{Note that for technical reasons, we need a small
overlap between low and high frequencies.} 
$$\displaylines{
\|f\|_{\dot B^s_{p,1}}^\ell\triangleq\sum_{k\leq k_0} 2^{ks}\|\ddk f\|_{L^p}\quad\hbox{and}\quad
\|f\|_{\dot B^s_{p,1}}^h\triangleq\sum_{k\geq k_0-1} 2^{ks}\|\ddk f\|_{L^p},\cr
\|f\|_{\wt L^\infty_T(\dot B^s_{p,1})}^\ell\triangleq\sum_{k\leq k_0} 2^{ks}\|\ddk f\|_{L^\infty_T(L^p)}\quad\hbox{and}\quad
\|f\|_{\wt L^\infty_T(\dot B^s_{p,1})}^h\triangleq\sum_{k\geq k_0-1} 2^{ks}\|\ddk f\|_{L^\infty_T(L^p)},}
$$
where, for any  Banach space $X,$ we denote by $L^\infty_T(X)\triangleq L^{\infty}([0,T];X)$
the  set  of  essentially bounded mesurable functions from $[0,T]$ to $X.$ \medbreak
Finally, we agree that throughout the paper $C$ stands for a positive harmless ``constant", the meaning of which is clear
 from the context. Similarly,  $f\lesssim g$ means that  $f\leq Cg$ and $f\thickapprox g$ means that $f\lesssim g$ and
$g\lesssim f$.  It will be also understood that  $\|(f,g)\|_{X}\triangleq\|f\|_{X}+\|g\|_{X}$ for all  $f,g\in X$.
\medbreak
To simplify the presentation, it is wise to perform a suitable  rescaling  so as to reduce our study to
the case where, at infinity, the density  $\varrho_\infty,$ the sound speed $c_\infty\triangleq\sqrt{P'(\varrho_\infty)}$
and the total viscosity $\nu_\infty\triangleq \lambda_\infty+2\mu_\infty$ (with
$\lambda_{\infty}\triangleq\lambda(\varrho_\infty)$ and $\mu_{\infty}\triangleq\mu(\varrho_\infty)$)
are equal to $1.$
 This may be done by making  the change of unknowns:
\begin{equation}\label{eq:change}
\wt a (t,x)\triangleq
\frac\varrho{\varrho_\infty}\biggl(\frac{\nu_\infty}{\varrho_\infty c_\infty^2}\,t,\frac{\nu_\infty}{\varrho_\infty c_\infty}\,x\biggr)-1
\quad\hbox{and}\quad \wt u(t,x)\triangleq\frac u{c_\infty}\biggl(\frac{\nu_\infty}{\varrho_\infty c_\infty^2}\,t,\frac{\nu_\infty}{\varrho_\infty c_\infty}\,x\biggr)\cdotp
\end{equation}
Assuming that\footnote{For the statement of decay estimates in
the general case, the reader is referred to Section \ref{sec:4}.}
 $\varrho_\infty=1,$ $P'(\varrho_\infty)=1$ and  $\nu_\infty=1,$ our main result is the following one.
\begin{thm}\label{thm2.1} Let $d\geq2$ and  $p$ satisfying Condition \eqref{eq:p}.
 Let  $(\varrho_0,u_0)$ fulfill the assumptions of Theorem~\ref{thm1.1}, and denote by~$(\varrho,u)$
 the corresponding global solution of System \eqref{R-E1}.
There exists a    positive constant $c=c(p,d,\lambda,\mu,P)$ such that if in addition
\begin{equation}\label{R-E9}
{\cD}_{p,0}\triangleq \|(a_0, u_0)\|_{\dot{B}^{-s_{0}}_{2,\infty}}^\ell \leq c\quad\hbox{with}\quad s_{0}\triangleq d\biggl(\frac2p-\frac12\biggr),
\end{equation}
then   we have  for all $t\geq0,$
\begin{equation}\label{R-E10}
{\cD}_{p}(t)\lesssim \bigl({\cD}_{p,0}+\|(\nabla a_0,u_0)\|^{h}_{\dot B^{\frac dp-1}_{p,1}}\bigr),
\end{equation}
where the norm ${\cD}_{p}(t)$ is defined by
\begin{multline}\label{eq:Dp}
{\cD}_{p}(t)\triangleq\sup_{s\in(-s_{0},2]}\|\langle\tau\rangle^{\frac {s_0+s}2}(a,u)\|_{L^\infty_t(\dot B^s_{2,1})}^\ell
\\+\|\langle\tau\rangle^{\alpha}(\nabla a,u)\|_{\wt L^\infty_t(\dot B^{\frac dp-1}_{p,1})}^h
+\|\tau\nabla  u\|_{\wt L^\infty_t(\dot B^{\frac dp}_{p,1})}^h
\end{multline}
and  $\alpha\triangleq\frac {s_{0}}{2}+\min(2,\frac {d}{4}+\frac12-\varepsilon)$ for some
arbitrarily small   $\varepsilon>0.$
\end{thm}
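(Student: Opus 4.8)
The plan is to prove Theorem~\ref{thm2.1} by a bootstrap argument on the functional $\cD_p(t)$. By Theorem~\ref{thm1.1} the solution $(a,u)$ already lies in $\cX_p$ with $\cX_p\lesssim\cX_{p,0}$ small, and $\cD_p(0)$ is finite and small, being dominated by $\cD_{p,0}$ together with the low-frequency part of $\cX_{p,0}$ through interpolation between $\dot B^{-s_0}_{2,\infty}$ and $\dot B^{\frac d2-1}_{2,1}$. It therefore suffices to establish, on any interval $[0,T]$ on which $\cD_p$ is finite, an a priori bound
\[
\cD_p(t)\lesssim\cD_{p,0}+\|(\nabla a_0,u_0)\|^h_{\dot B^{\frac dp-1}_{p,1}}+\bigl(\cX_p+\cD_p(t)\bigr)\,\cD_p(t),
\]
and then to absorb the last term by a standard continuation argument, using that $\cX_p$ is small and that $\cD_p$ stays small because the first two terms on the right are. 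To set things up I would perform the rescaling~\eqref{eq:change} and rewrite~\eqref{R-E1} as a perturbed linear system $\partial_t a+\div u=f$, $\partial_t u-\cA u+\nabla a=g$, where $\cA$ is the (normalized) Lamé operator and $f=-\div(au)$, while $g$ collects the convection term $-u\cdot\nabla u$, a pressure correction and a viscous-coefficient correction, all at least quadratic in $(a,u)$.

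For the low-frequency part the argument has two stages. First, \emph{propagation of the negative index}: on $\{|\xi|\lesssim1\}$ the linear part acts like the heat flow, so a Duhamel estimate restricted to low frequencies gives
\[
\|(a,u)(t)\|^\ell_{\dot B^{-s_0}_{2,\infty}}\lesssim\|(a_0,u_0)\|^\ell_{\dot B^{-s_0}_{2,\infty}}+\int_0^t\|(f,g)(\tau)\|^\ell_{\dot B^{-s_0}_{2,\infty}}\,d\tau,
\]
and since $-s_0>-\tfrac d2$ under Condition~\eqref{eq:p}, the products in $(f,g)$ can be estimated in $\dot B^{-s_0}_{2,\infty}$ by Bony decomposition (paraproduct and remainder bounds), in terms of norms belonging to $\cX_p$ and $\cD_p(t)$, the time integral being convergent thanks to the decay already encoded in $\cD_p$. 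Second, \emph{production of the sharp rate}: splitting $u$ into its divergence-free and potential parts via the Hodge decomposition, the localized energies of the corresponding components satisfy, by means of well-chosen Lyapunov functionals $\cL_k\simeq\|\ddk(a,u)\|_{L^2}$ as in~\cite{D0,D8}, dyadic inequalities of the type $\tfrac d{dt}\cL_k+c\,2^{2k}\cL_k\lesssim\|\ddk(f,g)(t)\|_{L^2}$ for $k\le k_0$; multiplying by $2^{ks}$, using $\|\ddk(a_0,u_0)\|_{L^2}\le 2^{ks_0}\cD_{p,0}$ and the elementary bound $\sum_{k\le k_0}2^{k(s_0+s)}e^{-c\,2^{2k}t}\lesssim\langle t\rangle^{-\frac{s_0+s}2}$ (valid for $s>-s_0$), and handling the Duhamel term with the decay of $(f,g)^\ell$ obtained in the first stage, yields the weighted estimate for each $s\in(-s_0,2]$ --- the cap $s\le2$ being as much as the nonlinear terms can afford.

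For the high-frequency part the density equation loses one derivative, so I would use Hoff's viscous effective flux (as in~\cite{H2,Hoff}), equivalently the effective unknown of~\cite{CD,CMZ2}, to recast the high-frequency system as a genuinely damped equation for $a^h$, with damping rate $\simeq1$, coupled to a parabolic equation for the effective velocity; since the linearized high frequencies decay exponentially, this produces, for $k\ge k_0-1$, bounds of the form
\[
\|\ddk(\nabla a,u)(t)\|_{L^p}\lesssim e^{-ct}\|\ddk(\nabla a_0,u_0)\|_{L^p}+\int_0^t e^{-c(t-\tau)}\|\ddk(f,g)(\tau)\|_{L^p}\,d\tau,
\]
together with a parabolic-smoothing version controlling $\tau\,\|\ddk\nabla u(\tau)\|_{L^p}$. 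Multiplying by $\langle t\rangle^{\alpha}2^{k(\frac dp-1)}$ (resp.\ by $\tau\,2^{k\frac dp}$), summing over $k\ge k_0-1$, splitting the time integral at $\tau=t/2$, and using $\langle t\rangle^{\alpha}e^{-ct}\lesssim1$, reduces everything to controlling $\int_0^t\langle\tau\rangle^{\alpha}\|(f,g)(\tau)\|^h_{\dot B^{\frac dp-1}_{p,1}}\,d\tau$; the high-frequency source is driven by low--high and high--high frequency interactions whose decay is borrowed in part from the low-frequency bounds --- whence the term $\tfrac{s_0}2$ in $\alpha$ --- and the value $\alpha=\tfrac{s_0}2+\min(2,\tfrac d4+\tfrac12-\varepsilon)$ is exactly what makes this integral converge, the $\min$ reflecting the restriction $s\le2$ in the low-frequency estimate and the $\varepsilon$ removing a borderline logarithmic loss. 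Once $\cD_p(t)$ is under control, Bernstein's inequality and Besov embeddings convert it into Lebesgue-space decay rates, recovering in particular the Matsumura--Nishida rate at $p=2$.

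The main obstacle is the system of nonlinear estimates underlying all three stages: each quadratic term has to be split into low--low, low--high and high--high frequency contributions and bounded in the relevant norm --- negative-regularity $L^2$-based for the propagation step, $L^2$-based for the low-frequency rate, and $L^p$-based for the high-frequency part --- so that, after multiplication by the weights $\langle\tau\rangle^{\frac{s_0+s}2}$, $\langle\tau\rangle^{\alpha}$ or $\tau$, the time integrals converge. The delicate point is that low-frequency decay feeds high-frequency decay and conversely, so one must check that this coupling is not circular; it is harmless because every such contribution carries a genuinely small factor --- either $\cX_p$ (small by Theorem~\ref{thm1.1}) or a positive power of $\cD_p(t)$ (small by the continuation argument) --- and one must then verify that the exponents close with precisely the stated value of $\alpha$.
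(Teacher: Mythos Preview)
Your overall strategy matches the paper's: bootstrap on $\cD_p$, treat low and high frequencies separately, use the effective velocity for the high-frequency damping, and close by absorbing the quadratic term. But several points are either superfluous, too vague, or miss a genuine difficulty.

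\textbf{Low frequencies.} Your two-stage scheme is not what the paper does, and Stage~1 is unnecessary. The paper never propagates the endpoint norm $\|(a,u)\|^\ell_{\dot B^{-s_0}_{2,\infty}}$; instead it goes directly to the weighted bound via Duhamel and the explicit semi-group decay $|\cF(E(t)U)(\xi)|\le Ce^{-c_0|\xi|^2 t}|\cF U(\xi)|$ for $|\xi|\lesssim1$. This already yields
\[
\langle t\rangle^{\frac{s_0+s}2}\|(a,u)(t)\|^\ell_{\dot B^s_{2,1}}\lesssim\cD_{p,0}
+\int_0^t\langle t-\tau\rangle^{-\frac{s_0+s}2}\|(f,g)(\tau)\|^\ell_{\dot B^{-s_0}_{2,\infty}}\,d\tau,
\]
so the only task is to show the integrand decays like $\langle\tau\rangle^{-\sigma}(\cD_p^2+\cX_p^2)$ with $\sigma>1$. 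Your Stage~2 Lyapunov description is equivalent to this, but your sentence ``handling the Duhamel term with the decay of $(f,g)^\ell$ obtained in the first stage'' is confused: Stage~1 gives no decay of $(f,g)$, only a bound on $(a,u)$, and that bound is never used.

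\textbf{The real gap: the case $p>d$.} You write that ``products in $(f,g)$ can be estimated in $\dot B^{-s_0}_{2,\infty}$ by Bony decomposition'', invoking only $-s_0>-\tfrac d2$. This is where the paper's main technical work lies, and your proposal overlooks it. When $p\le d$ one may use $L^p\times L^p\to L^{p/2}\hookrightarrow\dot B^{-s_0}_{2,\infty}$ together with $\dot B^{\frac dp-1}_{p,1}\hookrightarrow L^p$. But for $p>d$ (which Condition~\eqref{eq:p} allows in $d=2,3$) the exponent $\frac dp-1$ is negative and that embedding fails; standard paraproduct estimates do not close in $\dot B^{-s_0}_{2,\infty}$ for terms like $(u^h\cdot\nabla a^h)^\ell$ or $(\nabla F(a)\otimes\nabla u^h)^\ell$. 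The paper introduces nonstandard product laws (its Proposition~\ref{prop2.4b}) that exploit the low-frequency cut-off in an essential way, producing bounds such as
\[
\|FG^h\|^\ell_{\dot B^{-s_0}_{2,\infty}}\lesssim\bigl(\|F\|_{\dot B^{1-\frac dp}_{p,1}}+\|\dot S_{k_0+N_0}F\|_{L^{p^*}}\bigr)\|G^h\|_{\dot B^{\frac dp-1}_{p,1}},
\]
and then checks, term by term, that the extra $L^{p^*}$ piece is controlled by $\cD_p$. Without this you cannot close the low-frequency estimate for $p>d$.

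\textbf{High frequencies.} Two omissions. First, the damped transport equation for $a$ still contains $u\cdot\nabla a$, so applying $\partial_i\ddk$ generates a commutator $[u\cdot\nabla,\partial_i\ddk]a$; the paper handles this via a dedicated commutator estimate (its Proposition~\ref{prop2.5}, in time-weighted form), which you do not mention. Second, you reduce to controlling $\int_0^t\langle\tau\rangle^{\alpha}\|(f,g)(\tau)\|^h_{\dot B^{\frac dp-1}_{p,1}}\,d\tau$, but after inserting the exponential kernel the paper reduces instead to $\sum_{k\ge k_0}2^{k(\frac dp-1)}\sup_{\tau}\tau^\alpha S_k(\tau)$, i.e.\ a weighted $\wt L^\infty$-in-time bound rather than an $L^1$-in-time bound; this distinction matters when checking that the nonlinear estimates close with the given $\alpha$.

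\textbf{The third term of $\cD_p$.} Your ``parabolic-smoothing version'' is too vague. The paper treats $\|\tau\nabla u\|^h_{\wt L^\infty_t(\dot B^{\frac dp}_{p,1})}$ in a separate step: it writes the equation satisfied by $t\cA u$, namely $\partial_t(t\cA u)-\cA(t\cA u)=\cA u+t\cA F$, and applies maximal regularity for the Lam\'e semi-group. This step is what makes the argument work in $d=2$ and for all admissible $p$, and it feeds back into Step~2 through the factor $\|\tau\nabla u\|_{\wt L^\infty(\dot B^{\frac dp}_{p,1})}$ appearing in several nonlinear bounds.
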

Some comments are in order.
\begin{enumerate}
\item There is some freedom in the choice  of $s_0.$
In the standard case $p=2$ and for regular solutions, it is usual
to assume that the data are in $L^1$ which, by critical embedding corresponds to $s_0=-d/2.$
 This value of $s_0$ is  relevant in other contexts like the  Boltzmann equation (see the work by Sohinger and Strain \cite{SS}), or hyperbolic systems with dissipation (see the paper by the second author and Kawashima  \cite{XK2}). 

The reason why the space $L^1$ is natural when working in  $L^2$-type framework is just because products of two terms in $L^2,$
are in $L^1.$ In our  $L^p$ framework, the similar heuristics would 
bring us to replace $L^1$ by $L^{p/2}.$
Choosing  $s_0$ as above corresponds exactly to the critical embedding $L^{p/2}\hookrightarrow \dot B^{-s_0}_{2,\infty}.$
\item The decay rate for the low frequencies of the solution (first term of $\cD_p$) is optimal
inasmuch as it corresponds to the one of the linearized system \eqref{R-E1} about $(\varrho_\infty,0)$
for general data  in $\dot B^{-s_0}_{2,\infty}.$   The last term of $\cD_p$ is consistent with the critical functional
framework given by the bulk regularity of the velocity. Finally, the (maximal) value of $\alpha$ in the second term of $\cD_p$
may be guessed from the fact that in order to close the estimates,  we need
$\|u^\ell\cdot\nabla u^h\|_{\dot B^{\frac dp-1}_{p,1}}$ to  decay like $\tau^{-\alpha},$
while  the decay of $\nabla u^h$ in $\dot B^{\frac dp}_{p,1}$ is only $\tau^{-1}.$
Then applying the following product law in Besov spaces:
$$
\| \tau^\alpha  u^\ell\cdot\nabla u^h\|_{\wt L^\infty_t(\dot B^{\frac dp-1}_{p,1})} \lesssim
\|\tau^{\alpha-1}u^\ell\|_{\wt L^\infty_t(\dot B^{\frac d2-1}_{2,1})}\| \tau\nabla u\|_{\wt L^\infty_t(\dot B^{\frac dp}_{p,1})},
$$
and using the low frequency decay rate for $u$ gives us the constraint
$\alpha-1<\frac12(s_0+\frac d2-1)$ (at least if $\frac d2-1\leq2$). \smallbreak

\item  If  replacing \eqref{R-E9} by the stronger assumption $ \|(a_0, u_0)\|_{\dot{B}^{-s_{0}}_{2,1}}^\ell \leq c,$
then one can take $\varepsilon=0$ and change the first term of $\cD_p(t)$ for the slightly stronger norm
$\sup_{s\in(-s_{0},2]}\|\langle\tau\rangle^{\frac{s+s_0}2}(a,u)\|_{\wt L^\infty_t(\dot B^s_{2,1})}^\ell.$
\smallbreak
\item In physical dimensions $d=2,3,$  Condition \eqref{eq:p} allows us to consider the case  $p>d,$ so that the  
regularity exponent $d/p-1$ for the velocity becomes negative.
Our result thus applies  to \emph{large} highly oscillating initial velocities (see  \cite{CD,CMZ2} 
for more explanation).
\smallbreak
\item  Our functional  $\cD_p$
 has been worked out to encode enough decay information to handle all the nonlinear terms.
 Having a more accurate description than  in \cite{O} and, in particular, exhibiting
 gain of regularity and decay altogether (last term of $\cD_p$) is the key to getting optimal
 decay estimates in dimension $d=2$ and for $p>2.$
 Let us also emphasize that one may deduce $L^q$-$L^{r}$ decay estimates  in the spirit of \eqref{R-E6}
 from the expression of $\cD_p$ (see Corollaries \ref{cor1.1} and \ref{cor1.2} below).\smallbreak
 \item  Last but not least,   our approach is very robust :
suitable modifications  of the definition of $\cD_p$ should allow  to prove optimal decay estimates in critical
spaces for other  hyperbolic-parabolic systems arising in fluid mechanics models.
\end{enumerate}
\medbreak
We end this section with an overview of our strategy.  The starting point is  to rewrite   System \eqref{R-E1} as  the
linearized compressible Navier-Stokes equations about $(1,0)$, looking at the nonlinearities as source terms.
More concretely, we  consider
\begin{equation}\label{R-E3}
\left\{\begin{array}{l}\d_ta+\div u=f,\\[1ex]
\d_tu-\cA u+\nabla a=g,
\end{array}\right.
\end{equation}
with $f\triangleq-\div(au),\,$
$\cA\triangleq\mu_\infty\Delta+(\lambda_\infty\!+\!\mu_\infty)\nabla\div$ such that $\mu_\infty>0$ and $\lambda_\infty+2\mu_\infty=1,$
$$  g\triangleq-u\cdot\nabla u-I(a)\cA u-k(a)\nabla a+\frac1{1+a}\div\bigl(2\wt\mu(a) D(u)+\wt\lambda(a)\div u\:\Id\bigr),
 $$
where\footnote{In our analysis, the exact value of functions $k,$ $\tilde{\lambda},$ $\tilde{\mu}$ and even $I$ will not  matter :
 we  only need those functions to be smooth enough and to vanish at $0$.}
  $$
  I(a)\triangleq\frac{a}{1+a},\quad\!\! k(a)\triangleq\frac{P'(1+a)}{1+a}-1,\quad\!\!
\wt\mu(a)\triangleq\mu(1+a)-\mu(1)\ \hbox{  and  }\  \wt\lambda(a)\triangleq\lambda(1+a)-\lambda(1).$$

In the case of high Sobolev regularity, the basic method  to prove   \eqref{R-E6a}  is to take
advantage of the corresponding $L^1-L^2$ estimates for the semi-group generated by the left-hand side of \eqref{R-E3},
treating the terms $f$ and $g$ by means of  Duhamel formula, and accepting loss of derivatives as the case may be.
  In the critical regularity framework however,  one cannot afford any loss
  of regularity  for the high frequency part of the solution (and some terms like $u\cdot\nabla a$ induce  a loss of one
derivative as one cannot expect any smoothing for $a,$  solution of a  transport equation).
As regards the well-posedness issue in  the $L^2$ critical  framework (that is Theorem
\ref{thm1.1} with $p=2$), that difficulty has been overcome in \cite{D0}  thanks to  an appropriate  energy method
 after spectral localization of the mixed hyperbolic-parabolic system \eqref{R-E3} \emph{including the convection terms}.
 As pointed out  by Okita in \cite{O}, if assuming in addition that the initial data are in $L^1$ (or rather in
 the larger Besov space $\dot B^0_{1,\infty}$) then the same arguments lead to optimal time-decay estimates
 in the $L^2$ critical framework if $d\geq3.$ 
 \smallbreak
 To prove Theorem \ref{thm2.1} in its full generality, one has to proceed differently: on one hand, using Okita's time decay functional 
 does not allow to cover the two-dimensional case, and on the other hand it is not adapted to the general $L^p$ setting.
%  Overcoming those  difficulties requires two new ingredients. 
 To get round the difficulty arising from the regularity-loss for the high-frequency part of the solution
in the $L^p$ critical framework,  we shall follow the approach that has  been used recently by Haspot \cite{H2} to prove
Theorem \ref{thm1.1}. It is based on the observation that,  at leading order, both the divergence-free part of $u$ and   
the so-called  \emph{effective velocity} $w$ (which is another name for the \emph{viscous effective flux} of D. Hoff in \cite{Hoff})
fulfill some constant coefficient  heat equation,  while $a$ satisfies a damped transport equation. 
Now, to cover all dimensions $d\geq2$ and values of $p$ satisfying \eqref{eq:p}, we also need to include 
 an additional decay information for the high frequencies of the velocity in the definition of the decay functional $\cD_p$
 (last term of \eqref{eq:Dp}). 
 
 Another difficulty if $p>2,$  compared to the $L^2$ case,  is that  one cannot expect
  interaction between high frequencies to provide any $L^1$ information 
on the low frequencies. Indeed, let us look at  the term  $u\cdot\nabla a$ as an example.
Decomposing $a$ and $u$ into their low and high frequency parts, we get
\begin{equation}
(u\cdot\nabla a)^{\ell}=(u^{\ell}\cdot\nabla a^{\ell})^{\ell}+(u^{h}\cdot\nabla a^{\ell})^{\ell}+(u^{\ell}\cdot\nabla a^{h})^{\ell}+(u^{h}\cdot\nabla a^{h})^{\ell}. \label{R-E8}
\end{equation}
As pointed out in  Theorem \ref{thm1.1},  the high-frequency part $(a^h,u^h)$  lies in an $L^p$-type space.
Hence one cannot  bound  the last term of \eqref{R-E8} in
a functional space with integrability index below $p/2.$
A similar difficulty arises for  the second and third terms which, at most,  are in $L^r$ with $1/r=1/p+1/2.$
The strategy will thus be to bound the low frequencies of $u\cdot\nabla a$  in $L^{p/2},$
and  to  resort to $L^{p/2}-L^2$ type decay estimates instead of $L^1-L^2$ estimates.  
This heuristics turns out to  work if   $p\leq d.$ Unfortunately, if  $p>d$ (a case that may occur
 in physical dimension $d=2,3$), the low frequency part of some nonlinear terms need not be   in $L^{p/2}.$
 The remedy is to perform estimates in the negative Besov space $\dot{B}^{-d(\frac2p-\frac12)}_{2,\infty},$
 which corresponds to the following critical embedding:
 $$L^{\frac{p}{2}}(\mathbb{R}^{d})\hookrightarrow \dot{B}^{0}_{\frac p2,\infty}(\mathbb{R}^{d})\hookrightarrow\dot{B}^{-d(\frac2p-\frac12)}_{2,\infty}(\mathbb{R}^{d}),$$
 and matches  the low frequency assumption \eqref{R-E9}. This requires our using some non so-classical 
 product estimates in Besov spaces where  the low frequency cut-off is crucial, see  Proposition \ref{prop2.4b}.
  \medbreak
The rest of the paper unfolds as follows.  Section~\ref{sec:3} is devoted to the
proof of Theorem \ref{thm2.1}.  In Section~\ref{sec:4}, we point out some  consequences of our main theorem :   optimal decay estimates in Lebesgue
spaces in the spirit of \eqref{R-E6a}, and explicit  dependency
 with respect to  the Mach and Reynolds number
of \eqref{R-E1} (recall that, so far, we set  all parameters of the system  to $1$ for simplicity).
Some material concerning Besov spaces, paradifferential calculus, product
and commutator estimates  is recalled in Appendix.

%%%%%%%%%%%%%%%%%%%%%%%%%%%%%%%%%%%%%%%%%%%%%%%%%%%

\section{The proof of time-decay estimates} \setcounter{equation}{0}\label{sec:3}

We here prove  Theorem \ref{thm2.1}, taking  for granted the  global existence result of  Theorem \ref{thm1.1}.
We proceed in  three  steps, according to the three terms of  the time-weighted functional  $\cD_{p}$ defined in \eqref{eq:Dp}.
\smallbreak
 Step 1  combines
  the low frequency decay properties of the semi-group  defined by  the left-hand side of \eqref{R-E3},
 and Duhamel principle to handle the nonlinear terms.
 In that step,  having Condition \eqref{R-E9} is fundamental,
 as it rules  the decay rate  of   the   low-frequency part of $\cD_p$ (and thus indirectly  of  high frequency terms,
 owing to nonlinear interaction). The proof for  $p>d$ turns out to be  more involved as for $p\leq d$
 because  the space $\dot B^{\frac dp-1}_{p,1}$ is no longer
 (locally) embedded in $L^p$ so that  one cannot  resort directly
  to the obvious product law $L^p\times L^p\to L^{\frac p2}$
 to treat nonlinearities. 
\smallbreak
In the second step,  in order to exhibit the decay of the high frequencies part of the solution,  we
 introduce (after  B. Haspot in \cite{H1,H2}) the \emph{effective velocity} $w\triangleq\nabla(-\Delta)^{-1}(a-\div u).$
 This is motivated by the observation that if \eqref{R-E3} is written in terms of $a,$ $w$ and of the divergence free part $\cP u$
 of $u,$ then, up to low order terms, $a$ satisfies a \emph{damped} transport equation, and
 both $w$ and $\cP u$ satisfy a heat equation.
Applying a suitable  energy method after spectral localization  
enables us to avoid the  loss of one derivative coming from the convection terms, 
and to take advantage of  the  nice decay properties provided by the heat and damped transport equations.

In the last step, we establish gain of regularity and decay altogether
for the high frequencies of the velocity.  That step strongly relies on the
maximal regularity estimates for the Lam\'e semi-group which are the same as that for heat semi-group (see the remark
that follows Prop. \ref{prop2.7}), and is fundamental to get our main result in any dimension $d\geq2$ and for 
all indices $p$ satisfying \eqref{eq:p}.
\bigbreak
In what follows, we shall use repeatedly that
for $0<\sigma_1\leq\sigma_2$ with $\sigma_2>1,$ we have
\begin{equation}\label{R-E16}
\int_0^t\langle t-\tau\rangle^{-\sigma_1}\langle \tau\rangle^{-\sigma_2}\,d\tau
\lesssim\langle t\rangle^{-\sigma_1}.
\end{equation}

\subsubsection*{Step 1: Bounds for the low frequencies}
Let  $(E(t))_{t\geq0}$ be the semi-group associated to the left-hand side of \eqref{R-E3}. We  get after
spectral localization\footnote{Throughout, we set $z_k\triangleq\ddk z$ for any tempered distribution $z$ and $k\in\Z.$}
 that   for all $k\in\Z,$
\begin{equation}\label{R-E17}
\left(\begin{array}{c}a_k(t)\\u_k(t)\end{array}\right)
={E(t)}\left(\begin{array}{c}a_{0,k}\\u_{0,k}\end{array}\right)
\!+\!\int_0^t E(t-\tau)\left(\begin{array}{c}f_k(\tau)\\g_k(\tau)\end{array}\right)d\tau
\end{equation}
where $f$ and $g$ have been defined in \eqref{R-E3}.
\medbreak
{From} an explicit computation of  the action of $E(t)$ in Fourier variables (see e.g. \cite{CD}), we
know that for any $k_0\in\Z,$ there exist two positive constants $c_{0}$ and $C$ depending
only on $k_0$ and such that
\begin{eqnarray}\label{R-E18}
|\cF(E(t) U)(\xi)|\leq Ce^{-c_0t|\xi|^2}|\cF U(\xi)|\quad\hbox{for all }\  |\xi|\leq2^{k_0}.
\end{eqnarray}
Therefore, using Parseval equality and the definition of $\ddk,$ we get
 for all $k\leq k_0$,
\begin{align}\label{R-E109}
\|E(t)\ddk U\|_{L^2}\leq  C e^{-\frac{c_0}{4}2^{2k}t}\|\ddk U\|_{L^2}.
\end{align}
Hence, multiplying by $t^{\frac{s_0+s}2}2^{ks}$ and summing up on $k\leq k_0,$
\begin{equation}\label{R-E110}
t^{\frac {s_0+s}2}\sum_{k\leq k_0}2^{ks}\|E(t)\ddk U\|_{L^2}\lesssim
\|U\|_{\dot{B}^{-s_{0}}_{2,\infty}}^\ell \sum_{k\leq k_0}(\sqrt t\,2^k)^{s_{0}+s}\,e^{-\frac{c_0}{4}(\sqrt t\,2^k)^2}.
\end{equation}
Due to the following fact: for any $\sigma>0$ there exists a constant $C_\sigma$ so that
\begin{equation}\label{R-E111}
\sup_{t\geq0}\sum_{k\in\Z}t^{\frac\sigma2}2^{k\sigma}e^{-\frac{c_0}42^{2k}t}\leq C_\sigma,
\end{equation}
we get from \eqref{R-E110} that for $s+s_{0}>0,$
\begin{equation}\label{R-E112}
\sup_{t\geq0}\, t^{\frac {s_0+s}2}\|E(t)U\|_{\dot B^s_{2,1}}^\ell\lesssim \|U\|_{\dot{B}^{-s_{0}}_{2,\infty}}^\ell.
\end{equation}
In addition, it is clear  that  for $s+s_{0}>0,$
\begin{equation}\label{R-E113}
\|E(t)U\|_{\dot B^s_{2,1}}^\ell\lesssim \|U\|_{\dot{B}^{-s_{0}}_{2,\infty}}^\ell\Big(\sum_{k\leq k_{0}}2^{k(s_{0}+s)}\Big) \lesssim \|U\|_{\dot{B}^{-s_{0}}_{2,\infty}}^\ell.
\end{equation}
Therefore, setting $\langle t\rangle\triangleq \sqrt{1+t^2},$ we arrive at
\begin{equation}\label{R-E114}
\sup_{t\geq0}\langle t\rangle^{\frac {s_0+s}2}\|E(t)U\|_{\dot B^s_{2,1}}^\ell\lesssim \|U\|_{\dot{B}^{-s_{0}}_{2,\infty}}^\ell,
\end{equation}
and thus, taking advantage of Duhamel's formula,
\begin{equation}\label{R-E114a}
\biggl\|\int_0^tE(t-\tau)(f(\tau),g(\tau))\,d\tau\biggr\|^\ell_{\dot B^s_{2,1}}
\lesssim\int_0^t\langle t-\tau\rangle^{-\frac{s_0+s}2}\|(f(\tau),g(\tau))\|_{\dot B^{-s_0}_{2,\infty}}^\ell\,d\tau.
\end{equation}
We claim that if $p$ fulfills \eqref{eq:p}, then we have for all $t\geq0,$
\begin{equation}\label{R-E25b}
\int_0^t\langle t-\tau\rangle^{-\frac {s_0+s}2} \|(f, g)(\tau)\|_{\dot B^{-s_0}_{2,\infty}}^\ell\,d\tau\lesssim\langle t\rangle^{-\frac {s_0+s}2}
\bigl(\cD^2_{p}(t)+\cX^2_{p}(t)\bigr),
\end{equation}
 where $\cX_{p}$ and $\cD_p$ have been defined in
\eqref{eq:Xp} and \eqref{eq:Dp}, respectively.
\medbreak
Proving our claim requires different arguments depending on whether $p\leq d$ or $p>d.$
Let us start with the easier case $2\leq p\leq d.$ Then, owing to the embedding $L^{p/2}\hookrightarrow \dot B^{-s_0}_{2,\infty},$
it suffices to establish that
\begin{equation}\label{R-E25}
\int_0^t\langle t-\tau\rangle^{-\frac {s_0+s}2} \|(f, g)(\tau)\|_{L^{p/2}}\,d\tau\lesssim\langle t\rangle^{-\frac {s_0+s}2}
\bigl(\cD^2_{p}(t)+\cX^2_{p}(t)\bigr).
\end{equation}
Now, to bound the term with $f,$ we use the  decomposition
\begin{equation}\label{R-E26}
f=u\cdot\nabla a+ a\, \div u^\ell + a\,\div u^h,
\quad\hbox{with }\ u^\ell\triangleq\sum_{k< k_0}\ddk u\ \hbox{ and }\  u^h\triangleq u-u^\ell.
\end{equation}
 It follows from H\"{o}lder inequality  that
\begin{multline}\label{R-E27}
\int_0^t\langle t-\tau\rangle^{-\frac{s_{0}+s}2}\|(u\cdot\nabla a)(\tau)\|_{L^{p/2}}\,d\tau\\
\leq  \Bigl(\sup_{0\leq\tau\leq t}\langle \tau\rangle^{\frac{s_0}{2}-\frac{d}{2p}+\frac d4}\|u(\tau)\|_{L^p}\Bigr)
\Bigl(\sup_{0\leq\tau\leq t}\langle \tau\rangle^{\frac{s_0}{2}-\frac{d}{2p}+\frac d4+\frac12}\|\nabla a(\tau)\|_{L^p}\Bigr)
\\ \times \int_0^t\langle t-\tau\rangle^{-\frac{s_{0}+s}2}\langle\tau\rangle^{-s_{0}+\frac dp-\frac {d+1}{2}}\,d\tau.
\end{multline}
By Minkowski's inequality, we have
\begin{eqnarray}\label{R-E28}
\|u\|_{L^p}\leq \|u^{\ell}\|_{L^p}+\|u^{h}\|_{L^p},
\end{eqnarray}
and  embedding  (see the Appendix)  together  with the definition of $u^\ell$ and $u^h$ imply that
\begin{equation}\label{R-E30}
\|u^{\ell}\|_{L^p}\lesssim   \|u\|^{\ell}_{\dot{B}^{\frac d2-\frac dp}_{2,1}}
\quad\hbox{and} \quad \|u^{h}\|_{L^p} \lesssim  \|u\|_{\dot{B}^{\frac{d}{p}-1}_{p,1}}^{h}\quad\hbox{if }\
 2\leq p\leq d.
\end{equation}
 Combining \eqref{R-E28}--\eqref{R-E30}  and using the definition of $\alpha$ and  of $\cD_{p}(t)$ thus yields
\begin{eqnarray}\label{R-E31}
\sup_{0\leq\tau\leq t}\langle \tau\rangle^{\frac{s_0}{2}-\frac{d}{2p}+\frac d4}\|u(\tau)\|_{L^p}\lesssim \cD_{p}(t).
\end{eqnarray}
Similarly, we can get
\begin{eqnarray}\label{R-E32}
\sup_{0\leq\tau\leq t}\langle \tau\rangle^{\frac{s_0}{2}-\frac{d}{2p}+\frac d4}\|a(\tau)\|_{L^p}+\sup_{0\leq\tau\leq t}\langle \tau\rangle^{\frac{s_0}{2}-\frac{d}{2p}+\frac d4+\frac12}\|\nabla a(\tau)\|_{L^p}\lesssim \cD_{p}(t).
\end{eqnarray}
Because  $2\leq p\leq d$  and  $s_0=\frac{2d}p-\frac d2$,   we arrive for all  $-s_0<s\leq2$ at
\begin{eqnarray}\label{R-E33}
\frac{s_0}{2}+\frac s2\leq\frac dp-\frac d4+1\leq s_0-\frac dp+\frac {d+1}{2}=\frac dp+\frac12\cdotp
\end{eqnarray}
Since $d/p+1/2>1,$  we get from \eqref{R-E16},
\begin{eqnarray}\label{R-E34}
\int_0^t\langle t-\tau\rangle^{-\frac{s_0+s}2}\langle\tau\rangle^{-s_{0}+\frac dp-\frac {d+1}{2}}\,d\tau \lesssim \langle t\rangle^{-\frac{s_0+s}2}.
\end{eqnarray}
Hence, it follows from  \eqref{R-E27}, \eqref{R-E31}, \eqref{R-E32}  that
\begin{eqnarray}\label{R-E35}
\int_0^t\langle t-\tau\rangle^{-\frac{s_0+s}2}\|(u\cdot\nabla a)(\tau)\|_{L^{p/2}}\,d\tau \lesssim \langle t\rangle^{-\frac{s_0+s}2} \cD_{p}^2(t).
\end{eqnarray}
Bounding the term with $a\,\div u^\ell$ is similar: we get
\begin{eqnarray}\label{R-E36}
\int_0^t\langle t-\tau\rangle^{-\frac{s_0+s}2}\|(a\,\div u^\ell) (\tau)\|_{L^{p/2}}\,d\tau \lesssim
\langle t\rangle^{-\frac{s_0+s}2} \cD_{p}^2(t).
\end{eqnarray}
Regarding the term with $a\,\div u^h,$ we use that if $t\geq2,$
$$\displaylines{
\int_0^t\langle t-\tau\rangle^{-\frac {s_0+s}2}\|(a\,\div u^h)(\tau)\|_{L^{p/2}}\,d\tau\hfill\cr\hfill \leq
\int_0^t\langle t-\tau\rangle^{-\frac{s_{0}+s}2} \|a(\tau)\|_{L^p}\|\div u^h(\tau)\|_{L^p}\,d\tau =
\Big(\int^{1}_{0}+\int^{t}_{1}\Big)(\cdot\cdot\cdot)d\tau\triangleq I_{1}+I_{2}.}$$
Remembering the definitions of $\cX_{p}(t)$ and $\cD_{p}(t)$, we can obtain
\begin{eqnarray}\label{R-E38}
I_{1}&\!\!\lesssim\!\!& \langle t\rangle^{-\frac{s_{0}+s}2} \Big(\sup_{0\leq \tau \leq 1}\|a(\tau)\|_{L^p}\Big)\int^{1}_{0}\|\div u^h(\tau)\|_{L^p}\,d\tau
\nonumber\\ &\!\!\lesssim\!\!& \langle t\rangle^{-\frac{s_{0}+s}2} \cD_{p}(1)\cX_{p}(1)
\end{eqnarray}
and, using the fact that $\langle \tau\rangle \thickapprox \tau$ when $\tau\geq1,$
\begin{eqnarray*}
I_{2}&=&\int_1^t\langle t-\tau\rangle^{-\frac {s_{0}+s}2}\langle \tau\rangle^{-1-\frac{s_0}{2}}
\bigl(\langle\tau\rangle^{\frac {s_0}{2}}\|a(\tau)\|_{L^p}\bigr)\bigl(\tau\|\div u^h(\tau)\|_{L^p}\bigr)\,d\tau
\nonumber\\ &\leq& \sup_{1\leq \tau \leq t}\Big(\langle\tau\rangle^{\frac {s_0}{2}}\|a(\tau)\|_{L^p}\Big)
\sup_{1\leq \tau \leq t}\Big(\tau\|\div u^h(\tau)\|_{L^p}\Big)\int_1^t\langle t-\tau\rangle^{-\frac{s_{0}+s}2}
\langle \tau\rangle^{-1-\frac{s_0}{2}}\,d\tau
\\ &\lesssim& \langle t\rangle^{-\frac{s_{0}+s}2} \cD_{p}^2(t).
\end{eqnarray*}
Therefore, for $t\geq2$, we arrive at
\begin{equation}\label{R-E40}
\int_0^t\langle t-\tau\rangle^{-\frac {s_{0}+s}2}\|(a\,\div u^h)(\tau)\|_{L^{p/2}}\,d\tau \lesssim\langle t\rangle^{-\frac{s_{0}+s}2} \Big(\cD_{p}(t)\cX_{p}(t)+\cD_{p}^2(t)\Big)\cdotp
\end{equation}
The case $t\leq2$ is obvious as $\langle t\rangle\thickapprox1$ and
$\langle t-\tau\rangle\thickapprox1$ for $0\leq\tau\leq t\leq 2,$ and 
\begin{eqnarray}\label{R-E41}
\int_0^t\|a\,\div u^h\|_{L^{p/2}}\,d\tau\leq \|a\|_{L^\infty_t(L^p)}\|\div u^h\|_{L_t^1(L^p)}\lesssim \cD_{p}(t)\cX_{p}(t).
\end{eqnarray}
Next, in order to bound the term of \eqref{R-E25} corresponding to $g,$ we use the decomposition
$g=g^1+g^2+g^3+g^4$ with  $g^1\triangleq -u\cdot\nabla u,$  $g^2\triangleq -k(a)\nabla a,$
 $$\begin{array}{rcl}
 g^3&\triangleq& 2\,\Frac{\tilde\mu(a)}{1+a}\,\div D(u)+\frac{\tilde\lambda(a)}{1+a}\,\nabla\div u-I(a)\cA u\\[2ex]
 \hbox{and}\qquad g^4&\triangleq&2\,\Frac{\tilde\mu'(a)}{1+a}D(u)\cdot\nabla a+\frac{\tilde\lambda'(a)}{1+a}\,\div u\:\nabla a.\end{array}
$$
 The terms with $g^1$ and $g^2$ may be handled as $f$ above:
 $k(a)\nabla a$ and  $u\cdot\nabla u^\ell$ may be treated as $u\cdot\nabla a,$
  and $u\cdot\nabla u^h,$ as $a\,\div u^h.$ To handle  the viscous term $g^3,$ we see that it suffices to bound
\begin{eqnarray}\label{R-E42}
K(a)\nabla^2 u^\ell\quad\hbox{and}\quad K(a)\nabla^2u^h,
\end{eqnarray}
where $K$ stands for some smooth function vanishing at $0.$
\medbreak
To bound  $K(a)\nabla^2u^\ell$, we write that
\begin{eqnarray}\label{R-E43}
 \int_0^t\langle t-\tau\rangle^{-\frac {s_{0}+s}2}\|K(a)\nabla^2u^\ell\|_{L^{p/2}}\,d\tau
 &\!\!\!\lesssim\!\!\!& \int_0^t \langle t-\tau\rangle^{-\frac {s_{0}+s}2}\langle\tau\rangle^{-1-s_{0}}\,d\tau\nonumber \\
 \times\Bigl(\sup_{\tau\in[0,t]} &&\!\!\!\!\!\!\!\!\!\!\langle\tau\rangle^{\frac {s_0}{2}}\|a(\tau)\|_{L^p}\Bigr)
  \Bigl(\sup_{\tau\in[0,t]} \langle\tau\rangle^{\frac {s_0}{2}+1}\|\nabla^2 u^\ell(\tau)\|_{L^p}\Bigr)\nonumber \\&
 \!\!\!\lesssim\!\!\!& \langle t\rangle^{-\frac{s_{0}+s}2} \cD_{p}^2(t),
\end{eqnarray}
where we used \eqref{R-E32}, the fact that $\frac d4-\frac d{2p}\geq0,$ and 
$$
\|\nabla^2 u^\ell(\tau)\|_{L^p}\lesssim \|u\|^\ell_{\dot B^{\frac d2-\frac dp+2}_{2,1}}
\lesssim\|u\|^\ell_{\dot B^2_{2,1}}.
$$

To bound the term involving $K(a)\nabla^2u^h,$ we consider the cases $t\geq2$ and $t\leq2$ separately.  If  $t\geq2$ then we write
$$\begin{aligned}
\int_0^t\langle t-\tau\rangle^{-\frac {s_0+s}2}\|K(a)\nabla^2u^h\|_{L^{p/2}}\,d\tau  \lesssim& \int_0^t\langle t-\tau\rangle^{-\frac {s_0+s}2} \|a(\tau)\|_{L^p}\|\nabla^2u^h(\tau)\|_{L^p}\,d\tau\\ =&
\Big(\int^{1}_{0}+\int^{t}_{1}\Big)(\cdot\cdot\cdot)d\tau\triangleq J_{1}+J_{2}.
\end{aligned}$$
Now, because $d/p-1\geq0,$ we have by embedding,
$$
J_{1}\lesssim \langle t \rangle ^{-\frac {s_0+s}2}\Big(\sup_{0\leq \tau \leq 1}\|a(\tau)\|_{L^p}\Big)\int_0^1\|\nabla^2u^h\|_{L^p}\,d\tau\lesssim \langle t\rangle^{-\frac{s_{0}+s}2} \cD_{p}(1)\cX_{p}(1)$$
and$$
\begin{aligned}
J_{2}&\lesssim \sup_{0\leq \tau \leq t}\Big(\langle\tau\rangle^{\frac {s_0}{2}}\|a(\tau)\|_{L^p}\Big)
\sup_{0\leq \tau \leq t}\Big(\tau\|\nabla^2 u^h(\tau)\|_{L^p}\Big)\int_1^t\langle t-\tau\rangle^{-\frac {s_{0}+s}2}\langle \tau\rangle^{-1-\frac{s_0}{2}}\,d\tau\\ &\lesssim\langle t\rangle^{-\frac{s_{0}+s}2} \cD_{p}^2(t).
\end{aligned}$$
Therefore, we end up with
$$
\int_0^t\langle t-\tau\rangle^{-\frac {s_{0}+s}2}\|K(a)\nabla^2u^h\|_{L^{p/2}}\,d\tau \lesssim \langle t\rangle^{-\frac{s_{0}+s}2} \Big( \cD_{p}(t)\cX_{p}(t)+\cD_{p}^2(t)\Big)\quad\hbox{for all }\ t\geq2.
$$
In the  case  $t\leq2$, we have $\langle t\rangle\thickapprox1$ and $\langle t-\tau\rangle\thickapprox1$ for $0\leq\tau\leq t\leq 2,$ and
\begin{equation}\label{R-E48}
  \int_0^t\langle t-\tau\rangle^{-\frac {s_{0}+s}2}\|K(a)\nabla^2u^h\|_{L^{p/2}}\,d\tau
 \lesssim   \int_0^t \|a\|_{L^p}\|\nabla^2u^h\|_{L^p}\,d\tau
\lesssim \cD_{p}(t)\cX_{p}(t).\end{equation}
To bound $g^4,$ it suffices to consider $\nabla F(a)\otimes\nabla u$ where $F$ is some
smooth function vanishing at $0.$ Once again, it is convenient to split that term into
$$\nabla F(a)\otimes\nabla u=\nabla F(a)\otimes\nabla u^\ell+\nabla F(a)\otimes\nabla u^h.$$
For bounding the first term, one may proceed as for proving \eqref{R-E43} (using \eqref{R-E32}):
$$
 \begin{array}{rcl}
 \Int_0^t\langle t-\tau\rangle^{-\frac {s_0+s}2}\|\nabla F(a)\otimes\nabla u^\ell\|_{L^{p/2}}\,d\tau
 &\!\!\!\lesssim\!\!\!& \Int_0^t \langle t-\tau\rangle^{-\frac{s_{0}+s}2}\langle\tau\rangle^{-1-s_{0}}\,d\tau\hspace{2cm}\nonumber \\
 \times\Bigl(\Sup_{\tau\in[0,t]} \langle\tau\rangle^{\frac {s_0}{2}+\frac12}&&\!\!\!\!\!\!\!\!\!\!\|\nabla a(\tau)\|_{L^p}\Bigr)
  \Bigl(\Sup_{\tau\in[0,t]} \langle\tau\rangle^{s_0+\frac12}\|\nabla u^\ell(\tau)\|_{L^p}\Bigr)\nonumber \\&
 \!\!\!\lesssim\!\!\!& \langle t\rangle^{-\frac{s_0+s}2} \cD_{p}^2(t).
\end{array}
$$

To handle the term $\nabla F(a)\otimes\nabla u^h,$ we consider the cases $t\geq2$ and $t\leq2$ separately.  If  $t\geq2$ then we write
$$\begin{aligned}
\int_0^t\langle t-\tau\rangle^{-\frac {s_{0}+s}2}\|\nabla F(a)\otimes\nabla u^h\|_{L^{p/2}}\,d\tau  \lesssim& \int_0^t\langle t-\tau\rangle^{-\frac {s_{0}+s}2} \|\nabla a(\tau)\|_{L^p}\|\nabla u^h(\tau)\|_{L^p}\,d\tau\\ =&
\Big(\int^{1}_{0}+\int^{t}_{1}\Big)(\cdot\cdot\cdot)d\tau\triangleq K_{1}+K_{2}.
\end{aligned}$$
It is clear that
$$
K_{1}\lesssim \langle t \rangle ^{-\frac{s_{0}+s}2}\Big(\sup_{0\leq \tau \leq 1}\|\nabla a(\tau)\|_{L^p}\Big)\int_0^1\|\nabla u^h\|_{L^p}\,d\tau\lesssim \langle t\rangle^{-\frac{s_{0}+s}2} \cD_{p}(1)\cX_{p}(1)$$
and that
$$K_{2}\lesssim \sup_{1\leq \tau \leq t}\Big(\langle\tau\rangle^{\frac {s_0}{2}+\frac12}\|\nabla a(\tau)\|_{L^p}\Big)
\sup_{1\leq \tau \leq t}\Big(\tau^{\frac12} \|\nabla u^h(\tau)\|_{L^p}\Big)\int_1^t\langle t-\tau\rangle^{-\frac {s_{0}+s}2}\langle \tau\rangle^{-1-\frac{s_0}{2}}\,d\tau.$$
Note that for $\tau\geq1,$ we have 
$$
\tau^{\frac12} \|\nabla u^h(\tau)\|_{L^p}\lesssim \tau \|\nabla u(\tau)\|^h_{\dot B^{\frac dp}_{p,1}}.
$$
Hence one can conclude thanks to \eqref{R-E32} that
$$
K_2\lesssim\langle t\rangle^{-\frac{s_{0}+s}2} \cD_{p}^2(t)\quad\hbox{for all }\ t\geq2.
$$
The (easy) case $t\leq2$ is left to the reader, which completes the proof of \eqref{R-E25}, and thus of \eqref{R-E25b},
for $p\leq d.$
\medbreak
Let us now prove \eqref{R-E25b} in the case $p>d$ (that can occur only if $d=2,3$).
The idea is to  replace the  H\"older inequality
$\|FG\|_{L^{p/2}}\leq\|F\|_{L^p}\|G\|_{L^p}$ with  the following two inequalities:
 \begin{eqnarray}\label{R-E115a}
\|FG\|_{\dot B^{-s_0}_{2,\infty}}
\lesssim \|F\|_{\dot B^{1-\frac dp}_{p,1}}\|G\|_{\dot B^{\frac d2-1}_{2,1}},\\
\label{R-E115b}
\|FG\|_{\dot B^{-\frac dp}_{2,\infty}}^\ell
\lesssim \|F\|_{\dot B^{\frac dp-1}_{p,1}}\|G\|_{\dot B^{1-\frac dp}_{2,1}},
 \end{eqnarray}
which stem from Proposition \ref{prop2.4} (second item) and Besov embedding.
\medbreak
Using Inequality \eqref{R-E115a} turns out to be appropriate for handling the terms:
$$
u\cdot\nabla a^\ell,\quad a\div u^\ell, \quad u\cdot\nabla u^\ell,\quad
k(a)\nabla a^\ell\ \hbox{ and }\quad K(a)\nabla^2u^\ell,
$$
while \eqref{R-E115b} will be used for $\nabla(F(a))\otimes\nabla u^\ell.$
\medbreak
We claim that
\begin{equation}\label{R-E116}
\|(a,u)(\tau)\|_{\dot B^{1-\frac dp}_{p,1}}\lesssim\langle\tau\rangle^{-\frac12}{\cD}_p(\tau).
\end{equation}
and that
\begin{equation}\label{R-E117}
\|a(\tau)\|_{\dot B^{\frac dp}_{p,1}}\lesssim\langle\tau\rangle^{-\frac dp}{\cD}_p(\tau).
\end{equation}

Indeed, we have by embedding and definition of $s_0,$
\begin{equation}\label{R-E118}
\|(a,u)^\ell(\tau)\|_{\dot B^{1-\frac dp}_{p,1}}\lesssim\|(a,u)^\ell(\tau)\|_{\dot B^{1-s_0}_{2,1}}\lesssim\langle\tau\rangle^{-\frac12}{\cD}_p(\tau),
\end{equation}
and, by interpolation, because $p>d,$
\begin{equation}\label{R-E119}
\|u^h\|_{\dot B^{1-\frac dp}_{p,1}}\lesssim\|u^h\|_{\dot B^{\frac dp-1}_{p,1}}^{\frac dp}
\|u^h\|_{\dot B^{\frac dp+1}_{p,1}}^{1-\frac dp}.
\end{equation}
Hence, using the definition of the second and third terms of ${\cD}_p,$
\begin{equation}\label{R-E120}
\|u^h(\tau)\|_{\dot B^{1-\frac dp}_{p,1}}\lesssim\langle\tau\rangle^{-(1+\frac dp(\alpha-1))}\cD_p(\tau).
\end{equation}
As obviously $\alpha\geq1,$ putting \eqref{R-E118} and \eqref{R-E120} together yields \eqref{R-E116} for $u.$
Finally, because $p\leq2d,$
$$
\|a^h(\tau)\|_{\dot B^{1-\frac dp}_{p,1}}\lesssim \|a^h(\tau)\|_{\dot B^{\frac dp}_{p,1}}
\lesssim \langle\tau\rangle^{-\alpha}{\cD}_p(\tau),
 $$
 and  Inequality \eqref{R-E116} is thus also fulfilled by $a^h.$
\smallbreak
For proving \eqref{R-E117}, we notice that by embedding and because $\frac d2\leq 2$ in the case we are interested in,
$$
\|a^\ell(\tau)\|_{\dot B^{\frac dp}_{p,1}}\lesssim \|a^\ell(\tau)\|_{\dot B^{\frac d2}_{2,1}}
\lesssim \langle\tau\rangle^{-\frac dp}\cD_p(\tau),
$$
and, because $\alpha\geq1\geq\frac dp,$
$$
\|a^h(\tau)\|_{\dot B^{\frac dp}_{p,1}}\lesssim
 \langle\tau\rangle^{-\frac dp}\cD_p(\tau).
 $$
  Now, thanks to \eqref{R-E115a} and \eqref{R-E116}, one can thus write
 $$
 \begin{aligned}
 \int_0^t\langle t-\tau\rangle^{-\frac{s_0+s}2}\|(u\cdot\nabla a^\ell)(\tau)\|_{\dot B^{-s_0}_{2,\infty}}^\ell\,d\tau
 &\lesssim \int_0^t \langle t-\tau\rangle^{-\frac{s_0+s}2}\|u\|_{\dot B^{1-\frac dp}_{p,1}}\|\nabla a^\ell\|_{\dot B^{\frac d2-1}_{2,1}}\,d\tau
 \\
  &\lesssim \cD_p^2(t)\int_0^t \langle t-\tau\rangle^{-\frac{s_0+s}2}\langle \tau\rangle^{-(\frac dp+\frac12)}\,d\tau.
  \end{aligned}
$$
For all  $-s_0<s\leq2,$ we have
\begin{equation}\label{R-E120a}
0<\frac{s_0+s}2\leq \frac dp-\frac d4+1\leq \frac dp+\frac12\cdotp
\end{equation}
 As  $d/p+1/2>1$ (because $p<2d$),   Inequality \eqref{R-E16} thus implies that
 \begin{equation}\label{R-E121}
  \int_0^t\langle t-\tau\rangle^{-\frac{s_0+s}2}\|(u\cdot\nabla a^\ell)(\tau)\|_{\dot B^{-s_0}_{2,\infty}}^\ell\,d\tau
 \lesssim \langle t\rangle^{-\frac{s_0+s}2}{\cD}_p^2(t).
 \end{equation}
The terms $a\,\div u^\ell,$ $u\cdot\nabla u^\ell$ and  $k(a)\nabla a^\ell$ are similar. 
Regarding the term $K(a)\nabla^2u^\ell,$ we just have to write that, thanks to \eqref{R-E115a} and Bernstein inequality,
$$\begin{aligned}
\int_{0}^{t}&\langle t-\tau\rangle^{-\frac {s_{0}+s}{2}}\|K(a)\nabla^2 u^\ell(\tau)\|^{\ell}_{\dot{B}^{-s_{0}}_{2,\infty}}\,d\tau\\
&\lesssim\Big(\sup_{\tau\in[0,t]}\langle\tau\rangle^{\frac{1}{2}}\|a(\tau)\|_{\dot{B}^{1-\frac{d}{p}}_{p,1}}\Big)\!
\Big(\sup_{\tau\in[0,t]}\langle\tau\rangle^{\frac{d}{p}}\|\nabla^2 u^\ell(\tau)\|_{\dot{B}^{\frac{d}{2}-1}_{2,1}}\Big)\!
\int_{0}^{t}\langle t-\tau\rangle^{-\frac {s_{0}+s}{2}}\langle\tau\rangle^{-(\frac{d}{p}+\frac{1}{2})}d\tau\\
&\lesssim \langle t\rangle^{-\frac {s_{0}+s}{2}}\Big(\sup_{\tau\in[0,t]}\langle\tau\rangle^{\frac{1}{2}}\|a(\tau)\|_{\dot{B}^{1-\frac{d}{p}}_{p,1}}\Big)\!
\Big(\sup_{\tau\in[0,t]}\langle\tau\rangle^{\frac{d}{p}}\|u^\ell(\tau)\|_{\dot{B}^{\frac{d}{2}}_{2,1}}\Big)\lesssim \langle t\rangle^{-\frac {s_{0}+s}{2}}{\cD}_{p}^2(t).\end{aligned}
$$
Bounding  $\nabla u^\ell\otimes\nabla F(a)$ requires inequality \eqref{R-E115b}
and Proposition \ref{prop2.6}: we have
$$\begin{aligned}
\int_{0}^{t}\langle t-\tau\rangle^{-\frac {s_{0}+s}{2}}\|\nabla u^\ell\cdot\nabla F(a)\|^{\ell}_{\dot{B}^{-\frac dp}_{2,\infty}}\,d\tau
&\lesssim \int_{0}^{t}\langle t-\tau\rangle^{-\frac {s_{0}+s}{2}}\|\nabla u^\ell\|_{\dot B^{1-\frac dp}_{2,1}}
\|\nabla F(a)\|_{\dot B^{\frac dp-1}_{p,1}}\,d\tau\\
&\lesssim  \int_{0}^{t}\langle t-\tau\rangle^{-\frac {s_{0}+s}{2}}\|u^\ell\|_{\dot B^{2-\frac dp}_{2,1}}
\|a\|_{\dot B^{\frac dp}_{p,1}}\,d\tau.
\end{aligned}
$$
Of course, as  $s_0\leq d/p,$ we have
$$
\|\nabla u^\ell\cdot\nabla F(a)\|^{\ell}_{\dot{B}^{-s_0}_{2,\infty}}\lesssim \|\nabla u^\ell\cdot\nabla F(a)\|^{\ell}_{\dot{B}^{-\frac dp}_{2,\infty}}.
$$
Now, because the definition of $\cD_p$ ensures that
$$
\|u^\ell\|_{\dot B^{2-\frac dp}_{2,1}}\leq \langle\tau\rangle^{-(1+\frac d{2p}-\frac d4)}\cD_p(\tau),
$$
we get, using  \eqref{R-E117} and  the fact that   $1+\frac{3d}{2p}-\frac d4\geq 1+\frac{s_0}2,$
$$
\int_{0}^{t}\langle t-\tau\rangle^{-\frac {s_{0}+s}{2}}\|\nabla u^\ell\cdot\nabla F(a)\|^{\ell}_{\dot{B}^{-s_{0}}_{2,\infty}}\,d\tau
\lesssim \langle t\rangle^{-\frac {s_{0}+s}{2}}{\cD}_{p}^2(t).
$$
Bounding the terms corresponding to
$$u\cdot\nabla a^h,\quad a\div u^h, \quad u\cdot\nabla u^h,\quad
k(a)\nabla a^h\ \hbox{ and }\quad K(a)\nabla^2u^h$$
requires our using Inequality \eqref{eq:est1} with $\sigma=1-d/p,$ namely
$$
\|FG^h\|_{\dot B^{-s_0}_{2,\infty}}^\ell\lesssim\bigl(\|F\|_{\dot B^{1-\frac dp}_{p,1}}+\|\dot S_{k_0+N_0}F\|_{L^{p^*}}\bigr)\|G^h\|_{\dot B^{\frac dp-1}_{p,1}}\quad\hbox{with }\ \frac1{p^*}\triangleq\frac 12-\frac1p,
$$
for some universal integer $N_0,$ 
which implies, owing to the embedding $\dot B^{\frac dp}_{2,1}\hookrightarrow L^{p^*}$ and to Bernstein inequality
(note that $p^*\geq p$),
\begin{equation}\label{s0}
\|FG^h\|_{\dot B^{-s_0}_{2,\infty}}^\ell\lesssim\bigl(\|F^\ell\|_{\dot B^{\frac dp}_{2,1}}+\|F\|_{\dot B^{1-\frac dp}_{p,1}}\bigr)\|G^h\|_{\dot B^{\frac dp-1}_{p,1}}.
\end{equation}

As an example, let us show how \eqref{s0} allows to bound the term corresponding to  $a\,\div u^h.$  We start with the inequality
$$\begin{aligned}
\int_{0}^{t}\langle t-\tau\rangle^{-\frac {s_{0}+s}{2}}\|a\,\div u^{h}\|^{\ell}_{\dot{B}^{-s_{0}}_{2,\infty}}\,d\tau &\lesssim\! \int_{0}^{t}\langle t-\tau\rangle^{-\frac {s_{0}+s}{2}}\bigl(\|a^\ell\|_{\dot B^{\frac dp}_{2,1}}\!+\!\|a\|_{\dot{B}^{1-\frac{d}{p}}_{p,1}}\bigr)\|\mathrm{div}u^{h}\|_{\dot{B}^{\frac{d}{p}-1}_{p,1}}d\tau \\&=
\Big(\int^{1}_{0}+\int^{t}_{1}\Big)(\cdot\cdot\cdot)d\tau\triangleq \tilde{I}_{1}+\tilde{I}_{2}.
\end{aligned}
$$
Because $d/p-1<1-d/p\leq d/p$ and $d/p\geq d/2-1,$ we have
$$
\|a\|_{\dot{B}^{1-\frac{d}{p}}_{p,1}}\lesssim \|a\|_{\dot B^{\frac dp}_{p,1}\cap \dot B^{\frac dp-1}_{p,1}}
\lesssim \|a\|_{\dot B^{\frac d2-1}_{2,1}}^\ell+\|a\|_{\dot B^{\frac dp}_{p,1}}^h\quad\hbox{and}\quad
\|a^\ell\|_{\dot B^{\frac dp}_{2,1}}\lesssim\|a\|_{\dot B^{\frac d2-1}_{2,1}}^\ell,
$$
and thus
$$\tilde{I}_{1}\lesssim \langle t\rangle^{-\frac {s_{0}+s}{2}}{\cD}_{p}(1)\cX_{p}(1).$$
Furthermore,  we notice that
\begin{equation}\label{s1}
\|(a,u)^\ell\|_{\dot B^{\frac dp}_{2,1}}\leq\langle\tau\rangle^{-\frac12(s_0+\frac dp)}\cD_p(t).
\end{equation}
Hence, using also \eqref{R-E116}, the fact that
$$
\sup_{\tau\in[1,t]}\tau\|\mathrm{div}u^{h}(\tau)\|_{\dot{B}^{\frac{d}{p}-1}_{p,1}}\lesssim \cD_p(t),
$$
and that for  all $s\leq2,$  we have $\frac{s_0+s}2\leq\min(\frac32,\frac12(s_0+\frac dp+2)),$  we conclude that
$$
\begin{aligned}\tilde{I}_{2}&\lesssim \cD_p^2(t)
\int_{0}^{t}\langle t-\tau\rangle^{-\frac {s_{0}+s}{2}}\bigl(\langle\tau\rangle^{-\frac12(s_0+\frac dp+2)}
+\langle\tau\rangle^{-\frac{3}{2}}\bigr)\,d\tau\\
&\lesssim \langle t\rangle^{-\frac {s_{0}+s}{2}}{\cD}_{p}^2(t),
\end{aligned}
$$
The term $u\cdot\nabla u^h$ being completely similar (thanks to \eqref{R-E116} and \eqref{s1}), we get
\begin{equation}\label{s6}\int_{0}^{t}\langle t-\tau\rangle^{-\frac {s_{0}+s}{2}}
\|(a\,\mathrm{div}u^{h},u\cdot\nabla u^{h})\|^{\ell}_{\dot{B}^{-s_{0}}_{2,\infty}}\,d\tau
\lesssim\langle t\rangle^{-\frac {s_{0}+s}{2}}\Big({\cD}_{p}(t)\cX_{p}(t)+{\cD}_{p}^2(t)\Big)\cdotp
\end{equation}
That \eqref{s6} also holds for  $t\leq2$  is just a consequence of  the definition
of $\cD_p$ and $\cX_p.$
\medbreak
Bounding the term with $u\cdot\nabla a^h$ works the same since $\alpha\geq1$ and
\begin{equation}
\|a^h(t)\|_{\dot B^{\frac dp}_{p,1}}\lesssim\langle\tau\rangle^{-\alpha}\cD_p(t).
\end{equation}
In order to handle the term with $k(a)\nabla a^h,$ we use \eqref{s0}, Proposition \ref{prop2.6} and proceed as follows:
$$
\begin{aligned}
\|k(a)\nabla a^h\|^{\ell}_{\dot{B}^{-s_{0}}_{2,\infty}}&\lesssim \bigl(\|k(a)\|_{\dot B^{1-\frac dp}_{p,1}}+\|\dot S_{k_0+N_0}k(a)\|_{L^{p^*}}\bigr)
\|\nabla a^h\|_{\dot B^{\frac dp-1}_{p,1}}\\
&\lesssim\bigl(\|a\|_{\dot B^{1-\frac dp}_{p,1}}+\|a\|_{L^{p^*}}\bigr)
\|\nabla a^h\|_{\dot B^{\frac dp-1}_{p,1}}.
\end{aligned}
$$
Using the embeddings $\dot B^{\frac dp}_{2,1}\hookrightarrow L^{p^*}$ and
 $\dot B^{s_0}_{p,1}\hookrightarrow L^{p^*},$ and decomposing $a$ into low and high frequencies,
 we discover that
 $$
 \|a\|_{L^{p^*}}\lesssim \|a\|^\ell_{\dot B^{\frac dp}_{2,1}}+\|a\|^h_{\dot B^{\frac dp}_{p,1}}.
 $$
 Hence
 $$
 \|k(a)\nabla a^h\|^{\ell}_{\dot{B}^{-s_{0}}_{2,\infty}}\lesssim
 \bigl(\|a\|^\ell_{\dot B^{\frac dp}_{2,1}}+\|a\|^h_{\dot B^{\frac dp}_{p,1}}\bigr)\|\nabla a^h\|_{\dot B^{\frac dp-1}_{p,1}},
 $$
 and one can thus bound the term corresponding to $k(a)\nabla a^h$ exactly as $u\cdot\nabla a^h.$
 \medbreak
 Likewise, according to \eqref{s0} and proposition \ref{prop2.6} and arguing as above, we get
 $$
 \|K(a)\nabla^2 u^h\|^{\ell}_{\dot{B}^{-s_{0}}_{2,\infty}}\lesssim
 \bigl(\|a\|^\ell_{\dot B^{\frac d2-1}_{2,1}}+\|a\|^h_{\dot B^{\frac dp}_{p,1}}\bigr)\|\nabla^2u^h\|_{\dot B^{\frac dp-1}_{p,1}}.
 $$
 As we have for all $t\geq0,$
 $$
 \int_0^t  \|\nabla^2 u\|^h_{\dot B^{\frac dp-1}_{p,1}}\,d\tau\lesssim\cX_p(t)\quad\hbox{and}\quad
 t\|\nabla^2 u^h(t)\|_{\dot B^{\frac dp-1}_{p,1}}\lesssim  \cD_p(t),
 $$
 one can conclude exactly as for the previous term $k(a)\nabla a^h$ that 
 $$
 \int_{0}^{t}\langle t-\tau\rangle^{-\frac {s_{0}+s}{2}} \|K(a)\nabla^2 u^h\|^{\ell}_{\dot{B}^{-s_{0}}_{2,\infty}}\,d\tau\lesssim
 \langle t\rangle^{-\frac {s_{0}+s}{2}}\Big({\cD}_{p}(t)\cX_{p}(t)+{\cD}_{p}^2(t)\Big)\cdotp
 $$

  Finally, to bound $\nabla F(a)\otimes\nabla u^h,$ we have to resort to \eqref{eq:est2} with $\sigma=1-d/p,$ namely
  $$
  \|\nabla F(a)\otimes\nabla u^h\|^{\ell}_{\dot{B}^{-s_{0}}_{2,\infty}}\lesssim
  \biggl(\|\nabla F(a)\|_{\dot B^{\frac dp-1}_{p,1}}+\sum_{k=k_0}^{k_0+N_0-1}\|\ddk\nabla F(a)\|_{L^{p^*}}\biggr)
  \|\nabla u^h\|_{\dot B^{1-\frac dp}_{p,1}}.
  $$
  As $p^*\geq p,$  Bernstein inequality ensures that $\|\ddk\nabla F(a)\|_{L^{p^*}}\lesssim \|\ddk F(a)\|_{L^p}$ for $k_0\leq k< k_0+N_0.$
  Hence, thanks to Proposition \ref{prop2.6}, we have
    $$
  \|\nabla F(a)\otimes\nabla u^h\|^{\ell}_{\dot{B}^{-s_{0}}_{2,\infty}}\lesssim
  \|a\|_{\dot B^{\frac dp}_{p,1}}     \|\nabla u^h\|_{\dot B^{1-\frac dp}_{p,1}}.
  $$
Therefore, if $t\geq2$ then
 $$\begin{aligned}
\int_{0}^{t}\langle t-\tau\rangle^{-\frac {s_{0}+s}{2}}\|\nabla F(a)\otimes\nabla u^h\|^{\ell}_{\dot{B}^{-s_{0}}_{2,\infty}}\,d\tau &\lesssim\int_{0}^{t}\langle t-\tau\rangle^{-\frac {s_{0}+s}{2}}\|a\|_{\dot{B}^{\frac{d}{p}}_{p,1}}\|\nabla u^h\|_{\dot{B}^{1-\frac{d}{p}}_{p,1}}\,d\tau\\&=
\Big(\int^{1}_{0}+\int^{t}_{1}\Big)(\cdot\cdot\cdot)d\tau\triangleq \tilde{K}_{1}+\tilde{K}_{2}.
\end{aligned}
$$ As $1-d/p\leq d/p,$ it is clear that
$$\tilde{K}_{1}\lesssim \langle t\rangle^{-\frac {s_{0}+s}{2}}{\cD}_{p}(1)\cX_{p}(1)$$
and that, owing to \eqref{R-E117},
$$\begin{aligned}
\tilde{K}_{2}&\lesssim \Big(\sup_{\tau\in[1,t]}\langle\tau\rangle^{\frac{d}{p}}\|a(\tau)\|_{\dot{B}^{\frac{d}{p}}_{p,1}}\Big)
\Big(\sup_{\tau\in[1,t]}\tau\|\nabla u^h(\tau)\|_{\dot{B}^{\frac{d}{p}}_{p,1}}\Big)
\int_{0}^{t}\langle t-\tau\rangle^{-\frac {s_{0}+s}{2}}\langle\tau\rangle^{-1-\frac dp}d\tau\\&\lesssim  \langle t\rangle^{-\frac {s_{0}+s}{2}}{\cD}_{p}^2(t).\end{aligned}$$
Finally, if $t\leq2$ then  we have
$$\int_{0}^{t}\langle t-\tau\rangle^{-\frac {s_{0}+s}{2}}\|\nabla F(a)\otimes\nabla u^h(\tau)\|^{\ell}_{\dot{B}^{-s_{0}}_{2,\infty}}d\tau \lesssim \|a\|_{L^\infty_t(\dot{B}^{\frac{d}{p}}_{p,1})}\|\nabla u^h\|_{L_t^1(\dot{B}^{\frac{d}{p}}_{p,1})}\lesssim {\cD}_{p}(t)\cX_{p}(t),
$$
which completes the proof of \eqref{R-E25} in the case $p>d.$
\medbreak
Combining with \eqref{R-E114} for bounding the term of \eqref{R-E27} pertaining to the data,
we conclude that
\begin{eqnarray}\label{s17}
\langle t\rangle^{\frac {s_{0}+s}2}\|(a,u)(t)\|_{\dot B^s_{2,1}}^\ell \lesssim
{\cD}_{p,0}+{\cD}^2_{p}(t)+\cX^2_{p}(t)\quad\hbox{for all  }\ t\geq0,
\end{eqnarray}
 provided that $-s_{0}<s\leq2.$

%%%%%%%%%%%%%%%%%%%%

\subsubsection*{Step 2: Decay estimates for the high frequencies of $(\nabla a, u)$}
This step is devoted to bounding the second term of $\cD_{p}(t)$. In contrast with the first step, here
one can provide a common proof for all values of $p$ fulfilling \eqref{eq:p}.

Let $\cP\triangleq\Id+\nabla(-\Delta)^{-1}\mathrm{div}$ be the Leray projector onto divergence-free vector fields. It follows from \eqref{R-E3} that  $\cP u$ satisfies the following ordinary heat equation:
$$
\d_t\cP u -\mu_\infty\Delta\cP u=\cP g.
$$
Applying $\ddk$ to the above equation yields for all $k\in\Z,$
$$
\d_t\cP u_k -\mu_\infty\Delta\cP u_k=\cP g_k\quad\hbox{with }\ u_k\triangleq \ddk u\ \hbox{ and }\
g_k\triangleq \ddk g.
$$
Then, multiplying each component of the above equation
by $|(\cP u_k)^i|^{p-2}(\cP u_k)^i$ and  integrating over $\R^d$ gives  for $i=1,\cdots,d,$
$$\frac 1p\frac d{dt}\|\cP u_k^i\|_{L^p}^p-\mu_\infty\int \Delta(\cP u_k)^i|(\cP u_k)^i|^{p-2}(\cP u_k)^i\,dx
=\int|(\cP u_k)^i|^{p-2}(\cP u_k)^ig^i_k\,dx.$$
The key observation is that the second term of the l.h.s., although not spectrally localized,
may be bounded from below as if it were (see Prop. \ref{prop2.3bis}).
After summation  on $i=1,\cdots,d,$ we end up  for some constant $c_p$ with
\begin{equation}\label{R-E52}
\frac1p\frac d{dt}\|\cP u_k\|_{L^p}^p+c_p\mu_\infty2^{2k}\|\cP u_k\|_{L^p}^p
\leq \|\cP g_k\|_{L^p}\|\cP u_k\|_{L^p}^{p-1}.
\end{equation}
At this point, following Haspot's method in   \cite{H1,H2}, we introduce the \emph{effective velocity}
$$w=\nabla(-\Delta)^{-1}(a-\mathrm{div}\,u).$$
It is clear  that $w$ fulfills
\begin{eqnarray}\label{R-E53}
\d_tw-\Delta w=\nabla(-\Delta)^{-1}(f-\div g) +w-(-\Delta)^{-1}\nabla a.
\end{eqnarray}
Hence, arguing exactly as for proving \eqref{R-E52}, we get for $w_k\triangleq\ddk w$:
\begin{multline}\label{R-E54}
\frac1p\frac d{dt}\|w_k\|_{L^p}^p+c_p2^{2k}\|w_k\|_{L^p}^p\\
\leq  \bigl(\|\nabla(-\Delta)^{-1}(f_k-\div g_k)\|_{L^p}
+\|w_k-(-\Delta)^{-1}\nabla a_k\|_{L^p}  \bigr)\|w_k\|_{L^p}^{p-1}.
\end{multline}
In terms of $w,$ the function $a$ satisfies
the following \emph{damped} transport equation:
\begin{equation}\label{R-E55}
 \d_ta+\div(au)+a=-\div w.
 \end{equation}
Then, applying the operator $\d_i\dot{\Delta}_{k}$  to \eqref{R-E55}
and denoting $R_k^i\triangleq[u\cdot\nabla,\d_i\ddk]a$
gives
\begin{equation}\label{R-E56}
 \d_t\d_ia_k+u\cdot\nabla\d_ia_k+\d_ia_k=   -\d_i\ddk(a\div u)-\d_i\div w_k+R_k^i,\quad i=1,\cdots,d.\end{equation}
Multiplying  by $|\d_ia_k|^{p-2}\d_ia_k,$   integrating on $\R^d,$ and performing an integration by
parts in the second term of  \eqref{R-E56}, we get
$$\displaylines{
\frac1p\frac{d}{dt}\|\d_ia_k\|_{L^p}^p+\|\d_ia_k\|_{L^p}^p=\frac1p\int\div u\:|\d_ia_k|^p\,dx
\hfill\cr\hfill+\int\bigl(R_k^i -\d_i\ddk(a\div u)-\d_i\div w_k)|\d_ia_k|^{p-2}\d_ia_k\,dx.}
$$
Summing up on $i=1,\cdots,d,$ and applying H\"older  and Bernstein inequalities
 leads to
\begin{multline}\label{R-E57}
\frac1p\frac{d}{dt}\|\nabla a_k\|_{L^p}^p+\|\nabla a_k\|_{L^p}^p\leq\Bigl(\frac{1}{p}\|\mathrm{div}u\|_{L^\infty}\|\nabla a_k\|_{L^p}+\|\nabla\ddk(a\mathrm{div}u)\|_{L^p}\\+C2^{2k}\|w_k\|_{L^p}+\|R_k\|_{L^p}\Bigr)\|\nabla a_k\|_{L^p}^{p-1}.
\end{multline}
Adding up that inequality (multiplied by $\ep c_p$) to \eqref{R-E52} and \eqref{R-E54} yields
$$\displaylines{
\frac1p\frac{d}{dt}\bigl(\|\cP u_k\|_{L^p}^p+\|w_k\|_{L^p}^p+\ep c_p\|\nabla a_k\|_{L^p}^p\bigr)
+c_p2^{2k}\bigl(\mu_\infty\|\cP u_k\|_{L^p}^p+\|w_k\|_{L^p}^p)+\ep c_p\|\nabla a_k\|_{L^p}^p\hfill\cr\hfill
\leq  \bigl(\|\cP g_k\|_{L^p}+\|\nabla(-\Delta)^{-1}(f_k-\div g_k)\|_{L^p}\bigr)\|(\cP u_k,w_k)\|_{L^p}^{p-1}\hfill\cr\hfill
+\ep c_p\Bigl(\frac{1}{p}\|\mathrm{div}u\|_{L^\infty}\|\nabla a_k\|_{L^p}+\|\nabla\ddk(a\div u)\|_{L^p}+\|R_k\|_{L^p}\Bigr)\|\nabla a_k\|_{L^p}^{p-1}\hfill\cr\hfill
+C\ep c_p2^{2k}\|w_k\|_{L^p}\|\nabla a_k\|_{L^p}^{p-1}+\|w_k\|_{L^p}^{p}
+\|(-\Delta)^{-1}\nabla a_k\|_{L^p}\|w_k\|_{L^p}^{p-1}.}
$$
Taking advantage of Young inequality, we see  that the last line may be absorbed by the l.h.s. if
$\ep$ is taken small enough. It is also the case of the last two terms according to  \eqref{eq:B2}
(as  $(-\Delta)^{-1}$ is a homogeneous Fourier multiplier of degree $-2$),  if $k$ is large enough.
Therefore,  remembering that $f_k=\ddk\div(au)$ and using that
 $\nabla(-\Delta)^{-1}\div$ is a homogeneous  multiplier of degree $0,$
we  conclude that there exist some $k_0\in\Z$ and $c_0,\ep>0$ so that for all $k\geq k_0,$ we have
$$\displaylines{
\frac1p\frac{d}{dt}\bigl(\|\cP u_k\|_{L^p}^p+\|w_k\|_{L^p}^p+\ep c_p\|\nabla a_k\|_{L^p}^p\bigr)
+c_0\bigl(\|\cP u_k\|_{L^p}^p+\|w_k\|_{L^p}^p+\ep c_p\|\nabla a_k\|_{L^p}^p\bigr)
\hfill\cr\hfill
\leq C\bigl(\|g_k\|_{L^p}+\|\ddk(au)\|_{L^p}\bigr)\|(\cP u_k,w_k)\|_{L^p}^{p-1}\hfill\cr\hfill
+\ep c_p\Bigl(\frac{1}{p}\|\mathrm{div}u\|_{L^\infty}\|\nabla a_k\|_{L^p}+\|\nabla\ddk(a\div u)\|_{L^p}+\|R_k\|_{L^p}\Bigr)\|\nabla a_k\|_{L^p}^{p-1}.}
$$
Integrating in time, we arrive (taking smaller  $c_0$ as the case may be)  at
$$
e^{c_0t}\|(\cP u_k,w_k,\nabla a_k)(t)\|_{L^p}
\lesssim \|(\cP u_k,w_k,\nabla a_k)(0)\|_{L^p}+\int_0^te^{c_0\tau}S_k(\tau)\,d\tau
$$
with $S_k\triangleq S_k^1+\cdots+S_k^5$ and
$$\displaylines{
S_k^1\triangleq  \|\ddk(au)\|_{L^p},\quad
S_k^2\triangleq\|g_k\|_{L^p},\cr \quad S_k^3\triangleq\|\nabla\ddk(a\mathrm{div}\,u)\|_{L^p},\quad
S_k^4\triangleq \|R_k\|_{L^p}, \quad
S_k^5\triangleq\|\div u\|_{L^\infty}\|\nabla a_k\|_{L^p}.}$$
It is clear that $(u_k,\nabla a_k)$ satisfies a similar inequality, for  we have
\begin{eqnarray}\label{R-E62}
 u=w-\nabla(-\Delta)^{-1}a+\cP u
\end{eqnarray}
which leads  for $k\geq k_0$  to
\begin{eqnarray}\label{R-E63}
\|u_k-(w_k+\cP u_k)\|_{L^p} \lesssim 2^{-2k_0}\|\nabla a_k\|_{L^p}.
\end{eqnarray}Therefore, there exists a constant $c_{0}>0$ such that  for all $k\geq k_{0}$ and $t\geq0,$ we have
\begin{eqnarray}\label{R-E65}
\|(\nabla a_k,u_k)(t)\|_{L^p}\lesssim e^{-c_{0}t}\|(\nabla a_k(0),u_k(0))\|_{L^p}+\int_{0}^{t} e^{-c_{0}(t-\tau)}S_{k}(\tau)\,d\tau.
\end{eqnarray}
Now, multiplying both sides by $\langle t\rangle^{\alpha}2^{k(\frac dp-1)},$  taking the supremum on $[0,T],$
and summing up over $k\geq k_0$ yields
\begin{multline}\label{R-E66}
\|\langle t\rangle^\alpha(\nabla a,u)\|^h_{\wt L^\infty_T(\dot B^{\frac dp-1}_{p,1})}\lesssim
\|(\nabla a_0,u_0)\|_{\dot B^{\frac dp-1}_{p,1}}^h
\!\\+\sum_{k\geq k_0}\sup_{0\leq t\leq T}\biggl(\langle t\rangle^\alpha\!\int_0^t\!e^{c_0(\tau-t)}2^{k(\frac dp-1)}S_k\,d\tau\biggr)\cdotp
\end{multline}
In order to bound the sum, we first notice that
\begin{equation}\label{R-E67}
\sum_{k\geq k_0}\sup_{0\leq t\leq 2}\biggl(\langle t\rangle^\alpha\!\int_0^t\!e^{c_0(\tau-t)}2^{k(\frac dp-1)}S_k(\tau)\,d\tau\biggr)
\lesssim\int_0^2 \sum_{k\geq k_0}2^{k(\frac dp-1)}S_k(\tau)\,d\tau.
\end{equation}
It follows from Propositions \ref{prop2.4} and \ref{prop2.5} that
\begin{multline}\label{R-E68}
\int_0^2 \sum_{k\geq k_0}2^{k(\frac dp-1)}S_k(\tau)\,d\tau\lesssim\int_0^2 \!\Bigl(\|au\|^{h}_{\dot B^{\frac dp-1}_{p,1}}+\|g\|^{h}_{\dot B^{\frac dp-1}_{p,1}}+
\|\nabla u\|_{\dot B^{\frac dp}_{p,1}}\|a\|_{\dot B^{\frac dp}_{p,1}}\Bigr)d\tau.
\end{multline}
It is clear that the last term of the r.h.s.  may be  bounded by $C\cX^2_{p}(2)$ and that, owing to Prop. \ref{prop2.4},
we have \begin{equation}\label{R-E69}
\|au\|_{L^1_t(\dot B^{\frac dp-1}_{p,1})}^h\lesssim
\|au\|_{L^1_t(\dot B^{\frac dp}_{p,1})}\lesssim \|a\|_{L^2_t(\dot B^{\frac dp}_{p,1})}\|u\|_{L^2_t(\dot B^{\frac dp}_{p,1})}.
\end{equation}
Furthermore, combining Propositions \ref{prop2.4} and \ref{prop2.6} yields (remembering that $p<2d$)
$$\displaylines{
\|g\|_{L^1_t(\dot B^{\frac dp-1}_{p,1})}\lesssim \bigl(
 \|u\|_{L^\infty_t(\dot B^{\frac dp-1}_{p,1})}\|\nabla u\|_{L^1_t(\dot B^{\frac dp}_{p,1})}
+\|a\|_{L^\infty_t(\dot B^{\frac dp}_{p,1})}\|\nabla u\|_{L^1_t(\dot B^{\frac dp}_{p,1})}
 \hfill\cr\hfill+ \|a\|_{L^2_t(\dot B^{\frac dp}_{p,1})}\|\nabla a\|_{L^2_t(\dot B^{\frac dp-1}_{p,1})}\bigr).
}$$
Now, we observe that
\begin{eqnarray}\label{R-E71}
\|a\|_{L^2_t(\dot B^{\frac dp}_{p,1})}\leq \|a\|^{\ell}_{L^2_t(\dot B^{\frac dp}_{p,1})}+\|a\|^{h}_{L^2_t(\dot B^{\frac dp}_{p,1})}.\end{eqnarray}
Combining interpolation, H\"older inequality and embedding (here we use that $p\geq2$),
we may write
\begin{eqnarray}\label{R-E72}
\|a\|^{\ell}_{L^2_t(\dot B^{\frac dp}_{p,1})} &\lesssim& \Big(\|a\|^{\ell}_{L^{1}_t(\dot B^{\frac dp+1}_{p,1})}\Big)^{\frac{1}{2}}\Big(\|a\|^{\ell}_{L^{\infty}_t(\dot B^{\frac dp-1}_{p,1})}\Big)^{\frac{1}{2}}
\nonumber \\ &\lesssim& \|a\|^{\ell}_{L^{1}_t(\dot B^{\frac d2+1}_{2,1})}+\|a\|^{\ell}_{L^{\infty}_t(\dot B^{\frac d2-1}_{2,1})}\lesssim \cX_{p}(t).
\end{eqnarray}
Likewise, we have
$$
\|a\|^{h}_{L^2_t(\dot B^{\frac dp}_{p,1})}\lesssim \Big(\|a\|^{h}_{L^{1}_t(\dot B^{\frac dp}_{p,1})}\Big)^{\frac{1}{2}}\Big(\|a\|^{h}_{L^{\infty}_t(\dot B^{\frac dp}_{p,1})}\Big)^{\frac{1}{2}}\lesssim \cX_{p}(t).
$$
Arguing similarly for bounding $u,$ we get
\begin{equation}\label{R-E73}
\|a\|_{L^2_t(\dot B^{\frac dp}_{p,1})}+ \|u\|_{L^2_t(\dot B^{\frac dp}_{p,1})}\lesssim \cX_{p}(t),
\end{equation}
and one can conclude that the first two terms in the r.h.s. of \eqref{R-E68} may be bounded by $\cX_p^2(2).$
We thus have
\begin{equation}\label{R-E74}
\sum_{k\geq k_0}\sup_{0\leq t\leq 2}\langle t\rangle^\alpha\!\int_0^t\!e^{c_0(\tau-t)}2^{k(\frac dp-1)}S_k(\tau)\,d\tau
\lesssim \cX^2_{p}(2).
\end{equation}
Let us now bound the supremum for  $2\leq t\leq T$ in the last term of \eqref{R-E66}, assuming (with no loss
of generality) that $T\geq2.$
To this end, it is convenient to split the integral on $[0,t]$ into integrals
on  $[0,1]$ and $[1,t].$
The integral on $[0,1]$ is easy to handle: because $e^{c_0(\tau-t)}\leq e^{-c_{0}t/2}$ for $2\leq t \leq T$ and $0\leq \tau \leq1$,
 one can write that
 $$
 \begin{aligned}
\sum_{k\geq k_0}\sup_{2\leq t\leq T}\langle t\rangle^\alpha\!\int_0^1\!e^{c_0(\tau-t)}2^{k(\frac dp-1)}S_k(\tau)\,d\tau
&\leq\sum_{k\geq k_0}\sup_{2\leq t\leq T} \langle t\rangle^\alpha e^{-\frac{c_0}2t} \int_0^1 2^{k(\frac dp-1)}S_k\,d\tau\\
&\lesssim\int_0^1  \sum_{k\geq k_0}  2^{k(\frac dp-1)}S_k\,d\tau.
\end{aligned}
$$
Hence, following the   procedure leading to \eqref{R-E74}, we end up with
\begin{equation}\label{R-E76}
\sum_{k\geq k_0}\sup_{2\leq t\leq T}\biggl(\langle t\rangle^\alpha\!\int_0^1\!e^{c_0(\tau-t)}2^{k(\frac dp-1)}S_k(\tau)\,d\tau\biggr)\lesssim \cX^2_{p}(1).
\end{equation}
In order to bound  the $[1,t]$ part of the integral for $2\leq t\leq T,$ we notice that \eqref{R-E16} guarantees that
\begin{equation}\label{R-E76bis}
\sum_{k\geq k_0}\sup_{2\leq t\leq T}\biggl(\langle t\rangle^\alpha\!\int_1^te^{c_0(\tau-t)}2^{k(\frac dp-1)}S_k(\tau)\,d\tau\biggr)\lesssim
\sum_{k\geq k_0} 2^{k(\frac dp-1)}\sup_{1\leq t\leq T} t^\alpha S_k(t).
\end{equation}
In what follows, we shall   use repeatedly the following inequality
\begin{equation}\label{R-E77}
\|\tau\nabla u\|_{\wt L^\infty_t(\dot B^{\frac dp}_{p,1})}\lesssim \cD_{p}(t),
\end{equation}
which just stems  from the definition of $\cD_{p}(t),$ as regards the high-frequencies of $u$, and from
 Bernstein inequalities for the low frequencies. Indeed:
if $d\geq3$ then $d/2+1>2$ and one can write that
$$
\begin{aligned}
\|\tau\nabla u\|_{\widetilde{L}^\infty_t(\dot B^{\frac dp}_{p,1})}^\ell
&\lesssim \|\tau u\|_{\widetilde{L}^\infty_t(\dot B^{\frac d2+1}_{2,1})}^\ell
\lesssim \|\tau u\|_{{L}^\infty_t(\dot B^{2}_{2,1})}^\ell\\&\lesssim
 \|\langle \tau\rangle^{\frac{s_0}{2}+1} u\|_{L_t^\infty(\dot B^2_{2,1})}^\ell\leq \cD_p(t).
\end{aligned}
$$
In the 2D-case, observing that $p<4$ implies $s_0>0,$  we have for all small enough $\varepsilon>0$:
$$\begin{aligned}
\|\tau\nabla u\|_{\widetilde{L}^\infty_t(\dot B^{\frac 2p}_{p,1})}^\ell
&\lesssim \|\tau u\|_{\widetilde{L}^\infty_t(\dot B^{2}_{2,1})}^\ell
\lesssim \|\tau u\|_{{L}^\infty_t(\dot B^{2-2\varepsilon}_{2,1})}^\ell
\\&\lesssim \|\langle \tau\rangle^{\frac{s_0}{2}+1-\varepsilon} u\|_{L_t^\infty(\dot B^{2-2\varepsilon}_{2,1})}^\ell\leq \cD_p(t).
\end{aligned}
$$
To bound the contribution of  $S_k^1$ and $S_k^2$ in \eqref{R-E66}, we  use the fact that
\begin{equation}\label{R-E79}
\sum_{k\geq k_0} 2^{k(\frac dp-1)}\sup_{1\leq t\leq T} t^\alpha (S_k^1(t)+S_k^2(t))
\lesssim \|t^\alpha (au,g)\|_{\wt L^\infty_T(\dot B^{\frac dp-1}_{p,1})}^h.
\end{equation}
Now, product laws adapted to tilde spaces (see  Proposition \ref{prop2.4}) ensure that
\begin{eqnarray}\label{R-E80}
\|t^\alpha au^h\|_{\wt L^\infty_T(\dot B^{\frac dp-1}_{p,1})}&\!\!\!\lesssim\|a\|_{\wt L^\infty_T(\dot B^{\frac dp}_{p,1})}
\|t^{\alpha} u^h\|_{\wt L^\infty_T(\dot B^{\frac dp-1}_{p,1})}
\lesssim \cX_p(T)\cD_p(T),\\\label{R-E81}
\|t^\alpha a^hu^\ell\|_{\wt L^\infty_T(\dot B^{\frac dp-1}_{p,1})}&\!\!\!\lesssim\|t^\alpha a^h\|_{\wt L^\infty_T(\dot B^{\frac dp}_{p,1})}
\|u^\ell\|_{\wt L^\infty_T(\dot B^{\frac dp-1}_{p,1})}
\lesssim \cD_p(T)\cX_p(T).
\end{eqnarray}
Note that Bernstein inequality \eqref{eq:B1} and embedding imply that
$$\|t^\alpha a^\ell u^\ell\|_{\wt L^\infty_T(\dot B^{\frac dp-1}_{p,1})}^h\lesssim
\|t^{\alpha}a^\ell u^\ell\|_{\wt L^\infty_T(\dot B^{\frac d2}_{2,1})}.$$
Hence, using  Proposition \ref{prop2.4}, we discover that
\begin{equation}\label{R-E82}
\|t^\alpha a^\ell u^\ell\|_{\wt L^\infty_T(\dot B^{\frac dp-1}_{p,1})}\lesssim
\|t^{\alpha/2}a^\ell\|_{\wt L^\infty_T(\dot B^{\frac d2}_{2,1})}
\|t^{\alpha/2}u^\ell\|_{\wt L^\infty_T(\dot B^{\frac d2}_{2,1})}.
\end{equation}
Because  $\alpha\leq s_0+\min(2,\frac{d}2-\ep),$ we deduce that
\begin{eqnarray}\label{R-E83}
&&\|t^{\alpha/2}(a^\ell,u^\ell)\|_{\wt L^\infty_T(\dot B^{\frac d2}_{2,1})}
\lesssim
\|t^{\alpha/2}(a^\ell,u^\ell)\|_{L_T^\infty(\dot B^{\frac d2-\ep}_{2,1})}\leq \cD_p(T)\ \  \mbox{if}\ \  d\leq 4,\\\label{R-E84}
&&\|t^{\alpha/2}(a^\ell,u^\ell)\|_{\wt L^\infty_T(\dot B^{\frac d2}_{2,1})}\lesssim
\|t^{\alpha/2}(a^\ell,u^\ell)\|_{L_T^\infty(\dot B^{2}_{2,1})}\leq \cD_p(T)\ \  \mbox{if}\ \  d\geq 5.
 \end{eqnarray}
Therefore we conclude that
 \begin{equation}\label{R-E85}
\|t^\alpha(au)\|_{\wt L^\infty_T(\dot B^{\frac dp-1}_{p,1})}^h\lesssim \cD_{p}(T)(\cD_p(T)+\cX_p(T)).
\end{equation}
To bound the convection term of $g,$ we just write that
$$
\|t^\alpha (u\cdot\nabla u)\|_{\wt L^\infty_T(\dot B^{\frac dp-1}_{p,1})}^h\lesssim\|t^{\alpha-1} u\|_{\wt L^\infty_T(\dot B^{\frac {d}{p}-1}_{p,1})}
\|t \nabla u\|_{\wt L^\infty_T(\dot B^{\frac dp}_{p,1})}.
$$
On one hand, it  is obvious that $\|t^{\alpha-1} u\|^{h}_{\wt L^\infty_T(\dot B^{\frac {d}{p}-1}_{p,1})}\leq \cD_{p}(t).$
On the other hand, we have the following estimates  for $z=a,u$ and small enough $\ep$:
 \begin{eqnarray}\label{R-E86}
&&\|t^{\alpha-1}z\|^\ell_{\wt L_T^\infty(\dot B^{\frac dp-1}_{p,1})} \lesssim
\|t^{\alpha-1}z\|^\ell_{L_T^\infty(\dot B^{\frac d2-1-2\ep}_{2,1})}\leq \cD_p(T)\ \  \mbox{if}\ \  d\leq 6,\\\label{R-E87}
&&\|t^{\alpha-1}z\|^\ell_{\wt L_T^\infty(\dot B^{\frac dp-1}_{p,1})}  \lesssim
\|t^{\alpha-1}z\|^\ell_{L_T^\infty(\dot B^{2}_{2,1})}\leq \cD_p(T)  \ \  \mbox{if}\ \  d\geq 7,
 \end{eqnarray}
provided  $\alpha-1\leq\frac{s_0}{2}+\frac{d}{4}-\frac{1}{2}-\varepsilon$ if $d\leq 6$ and $\alpha-1\leq \frac{s_0}{2}+1$ if $d\geq 7.$
Hence \begin{equation}\label{R-E88}
\|t^\alpha (u\cdot\nabla u)\|_{\wt L^\infty_T(\dot B^{\frac dp-1}_{p,1})}^h\lesssim \cD^2_{p}(T).
\end{equation}
To bound the term with $k(a)\nabla a,$ we use that  according to Propositions \ref{prop2.4} and \ref{prop2.6},
and to \eqref{R-E83}, \eqref{R-E84}, we have
\begin{eqnarray}\label{R-E89}
&&\|t^\alpha(k(a)\nabla a^h)\|_{\wt L^\infty_T(\dot B^{\frac dp-1}_{p,1})}\lesssim \|a\|_{\wt L^\infty_T(\dot B^{\frac dp}_{p,1})}
\|t^\alpha a\|_{\wt L^\infty_T(\dot B^{\frac dp}_{p,1})}^h\leq \cX_{p}(T)\cD_{p}(T),\\\label{R-E90}
&&\|t^\alpha(k(a)\nabla a^\ell)\|_{\wt L^\infty_T(\dot B^{\frac dp-1}_{p,1})}\lesssim
\|t^{\alpha/2}a\|_{\wt L^\infty_T(\dot B^{\frac dp}_{p,1})}
\|t^{\alpha/2} a\|_{\wt L^\infty_T(\dot B^{\frac d2}_{2,1})}^\ell\lesssim \cD^2_{p}(T).
\end{eqnarray}
To bound the term containing $I(a)\cA u,$   we write that  \begin{equation}\label{R-E91}
\|t^\alpha I(a)\cA u\|_{\wt L^\infty_T(\dot B^{\frac dp-1}_{p,1})}
\lesssim\|t\nabla^2u\|_{\wt L^\infty_T(\dot B^{\frac dp-1}_{p,1})}\bigl(\|t^{\alpha-1}a\|_{\wt L^\infty_T(\dot B^{\frac d2}_{2,1})}^\ell
+\|t^{\alpha-1}a\|_{\wt L^\infty_T(\dot B^{\frac dp}_{p,1})}^h\bigr).
\end{equation}
The first term on the right-side may be bounded by virtue of \eqref{R-E77}, and it is  clear that
the last term is bounded by $\cD_{p}(T).$
As for  the second one, we use   \eqref{R-E86} and \eqref{R-E87}.
\medbreak
The last term of $g$ is of the type $\nabla F(a)\otimes \nabla u$ with $F(0)=0,$ and we have
$$
\|t^\alpha\nabla F(a)\otimes \nabla u\|_{\wt L^\infty_T(\dot B^{\frac dp-1}_{p,1})}
\lesssim \|t^{\alpha-1} a\|_{\wt L^\infty_T(\dot B^{\frac dp}_{p,1})} \|t\nabla u\|_{\wt L^\infty_T(\dot B^{\frac dp}_{p,1})}.
$$
So using \eqref{R-E77}, the definition of $\cD_p(T)$ and \eqref{R-E86}, \eqref{R-E87},  we see that
$$
\|t^\alpha\nabla F(a)\otimes \nabla u\|_{\wt L^\infty_T(\dot B^{\frac dp-1}_{p,1})}
\lesssim \cD_p^2(T).$$
Reverting to \eqref{R-E79}, we end up  with
\begin{equation}\label{R-E92}
\sum_{k\geq k_0}\sup_{1\leq t\leq T}t^\alpha2^{k(\frac dp-1)}(S_k^1+S_k^2)(t)
\lesssim  \cD_{p}(T)\cX_{p}(T)+\cD^2_{p}(T).\end{equation}
The term $S_k^3$ is similar to the last two terms of $g.$
As for  bounding  $S_k^4,$ we notice that a small modification of Proposition \ref{prop2.5}
(just include $t^\alpha$ in the definition of the commutator, follow the proof
treating the time variable as a parameter, and take the supremum on $[0,T]$ at the end) yields:
\begin{equation}\label{R-E92bis}
\sum_{k\in\Z}2^{k(\frac dp-1)}\sup_{0\leq t\leq T} t^\alpha\|R_k(t)\|_{L^p}\lesssim
 \|t\nabla u\|_{\wt L^\infty_T(\dot B^{\frac dp}_{p,1})}\|t^{\alpha-1}
 \nabla a\|_{\wt L^\infty_T(\dot B^{\frac dp-1}_{p,1})}.
 \end{equation}
 Hence using \eqref{R-E77}, \eqref{R-E86} and  \eqref{R-E87}   gives
 $$
\sum_{k\in\Z}2^{k(\frac dp-1)}\sup_{0\leq t\leq T} t^\alpha\|R_k(t)\|_{L^p}\lesssim \cD^2_{p}(T).
$$
The term with $S_k^5$ is clearly bounded by the r.h.s. of \eqref{R-E92bis}.
Putting all the above inequalities together, we conclude that
\begin{eqnarray}\label{R-E94}
\sum_{k\geq k_0}2^{k(\frac dp-1)}\sup_{1\leq t\leq T}t^\alpha S_k(t)\lesssim \cD_{p}(T)\cX_{p}(T)+\cD^2_{p}(T).
\end{eqnarray}
Plugging \eqref{R-E94} in  \eqref{R-E76bis}, and remembering   \eqref{R-E66}, \eqref{R-E74}  and \eqref{R-E76}, we end up with \begin{equation}\label{R-E95}
\|\langle t\rangle^\alpha(\nabla a,u)\|^h_{\wt L^\infty_T(\dot B^{\frac dp-1}_{p,1})}\lesssim
\|(\nabla a_0,u_0)\|^h_{\dot B^{\frac dp-1}_{p,1}}+\cX^2_{p}(T)+\cD^2_{p}(T).
\end{equation}

%%%%%%%%%%%%%%%%%%%%%%%%%%%%%%%%%%%

\subsubsection*{Step 3: Decay estimates with gain of regularity  for the high frequencies of $u$}
In order to bound the last term in $\cD_{p}(t),$ it suffices to notice  that the velocity $u$ satisfies
$$
\d_tu-\cA u=F\triangleq -(1+k(a))\nabla a-u\cdot\nabla u-I(a)\cA u+\frac1{1+a}\:\div\bigl(2\wt\mu(a)D(u)+\wt\lambda(a)\div u\,{\rm Id}\bigr).
$$
Hence
\begin{equation}\label{R-E97}
\d_t(t\cA u)-\cA(t\cA u)=\cA u+t\cA F.
\end{equation}
We thus deduce from Proposition \ref{prop2.7} and the remark that follows,  that
\begin{equation}\label{R-E98}
\|\tau\nabla^2u\|_{\wt L_t^\infty(\dot B^{\frac dp-1}_{p,1})}^h \lesssim \|\cA u\|_{L_t^1(\dot B^{\frac dp-1}_{p,1})}^h +\|\tau\cA F\|_{\wt L^\infty_t(\dot B^{\frac dp-3}_{p,1})}^h,
\end{equation}
whence, using the bounds given by Theorem \ref{thm1.1},
\begin{equation}\label{R-E99}
\|\tau\nabla u\|_{\wt L_t^\infty(\dot B^{\frac dp}_{p,1})}^h \lesssim
\|u\|^{h}_{L_t^1(\dot B^{\frac dp+1}_{p,1})}+\|\tau F\|_{\wt L^\infty_t(\dot B^{\frac dp-1}_{p,1})}^h
\lesssim  \cX_{p}(0)+\|\tau F\|_{\wt L^\infty_t(\dot B^{\frac dp-1}_{p,1})}^h.
\end{equation}
In order to bound the first term of $F,$  we notice that, because $\alpha\geq1,$ we have
\begin{eqnarray}\label{R-E100}
\|\tau\nabla a\|_{\wt L^\infty_t(\dot B^{\frac dp-1}_{p,1})}^h\lesssim
\|\langle\tau\rangle^\alpha a\|_{\wt L^\infty_t(\dot B^{\frac dp}_{p,1})}^h.
\end{eqnarray}
Next, product and composition estimates (see Propositions \ref{prop2.4} and \ref{prop2.6}) adapted to tilde spaces give
\begin{eqnarray}\label{R-E101}
\|\tau\,k(a)\nabla a\|_{\wt L^\infty_t(\dot B^{\frac dp-1}_{p,1})}^h\lesssim \|\tau^{\frac12} a\|_{\wt L^\infty_t(\dot B^{\frac dp}_{p,1})}^2\lesssim \cD^2_{p}(t),
\end{eqnarray}
as well as
\begin{equation}\label{R-E102}
\|\tau \, u\cdot\nabla u\|_{\wt L^\infty_t(\dot B^{\frac dp-1}_{p,1})}^h\lesssim \|u\|_{\wt L^\infty_t(\dot B^{\frac dp-1}_{p,1})}
\|\tau \nabla u\|_{\wt L^\infty_t(\dot B^{\frac dp}_{p,1})}\lesssim \cX_p(t)\cD_{p}(t)\end{equation}
and
\begin{equation}\label{R-E103}
\|\tau I(a)\cA u\|_{\wt L^\infty_t(\dot B^{\frac dp-1}_{p,1})}^h\lesssim \|a\|_{\wt L^\infty_t(\dot B^{\frac dp}_{p,1})}
\|\tau \nabla^2 u\|_{\wt L^\infty_t(\dot B^{\frac dp-1}_{p,1})}\lesssim \cX_p(t)\cD_{p}(t).
\end{equation}
Obviously, the terms $\frac{\wt\mu(a)}{1+a}\:\div D(u)$ and $\frac{\wt\lambda(a)}{1+a}\nabla\div u$
also satisfy \eqref{R-E103}.
Finally, we notice that for any smooth function $K,$ we have
$$
\|\tau K(a)\nabla a\otimes\nabla u\|_{\wt L^\infty_t(\dot B^{\frac dp-1}_{p,1})}^h\lesssim
(1+\|a\|_{\wt L^\infty_T(\dot B^{\frac dp}_{p,1})})\|\nabla a\|_{\wt L^\infty_T(\dot B^{\frac dp-1}_{p,1})}\|\tau\nabla u\|_{\wt L^\infty_T(\dot B^{\frac dp}_{p,1})}.
$$
Hence, reverting to \eqref{R-E99} and remembering \eqref{R-E77}, we get
\begin{eqnarray}\label{R-E104}
\|\tau\nabla u\|_{\wt L_t^\infty(\dot B^{\frac dp}_{p,1})}^h \lesssim  \cX_{p,0} +\cD_{p}(t)\cX_{p}(t)+\cD^2_{p}(t)
+\|\langle\tau\rangle^\alpha a\|_{\wt L^\infty_t(\dot B^{\frac dp}_{p,1})}^h.
\end{eqnarray}
Finally,  bounding the last term on the right-side of  \eqref{R-E104} according
to \eqref{R-E95},  and adding up the obtained  inequality to \eqref{s17} and \eqref{R-E95} yields for all $T\geq0,$
\begin{eqnarray}\label{R-E105}
\cD_{p}(T)\lesssim \cD_{p,0}+\|(a_0,u_0)\|^{\ell}_{\dot B^{\frac d2-1}_{2,1}}+ \|(\nabla a_0, u_0)\|_{\dot B^{\frac dp-1}_{p,1}}^h+ \cX^2_{p}(T)+\cD^2_{p}(T).
\end{eqnarray}
As Theorem \ref{thm1.1} ensures that $\cX_{p}\lesssim \cX_{p,0}\ll1$ and as
 $\|(a_0,u_0)\|^{\ell}_{\dot B^{\frac d2-1}_{2,1}}\lesssim\|(a_0,u_0)\|^{\ell}_{\dot B^{-s_0}_{2,\infty}},$
one can  conclude
that \eqref{R-E10} is fulfilled for all time if $\cD_{p,0}$ and $ \|(\nabla a_0, u_0)\|_{\dot B^{\frac dp-1}_{p,1}}^h$
are small enough. This completes the proof of Theorem \ref{thm2.1}.

%%%%%%%%%%%%%%%%%%%%%%%%%%%%%%%%%%%%%%%%%%%%

\section{More decay estimates}\label{sec:4}

This short section is devoted to pointing out  some corollaries of Theorem \ref{thm2.1}.

To start with,  let us extend its statement to  general values of $\varrho_\infty,$ $c_\infty$ and $\nu_\infty.$
It is based on  the change of unknowns \eqref{eq:change}
and on  the scaling invariance \eqref{eq:scaling} of Besov norms.
For example, introducing the \emph{Mach number} $Ma\triangleq 1/c_\infty$ and the \emph{Reynolds number}
 $Re\triangleq \varrho_\infty/c_\infty,$ and denoting
$$\|z\|_{\dot B^s_{2,1}}^{\ell,\zeta}:= \sum_{2^k\leq\zeta2^{k_0}} 2^{ks}\|\ddk z\|_{L^2}\quad\hbox{for }\ \zeta>0,$$
 we easily find that
$$
\bigl\|\langle\wt\tau\rangle^{\frac{s_0+s}2}(\wt a,\wt u)\bigr\|^{\ell,1}_{L^\infty_{\wt t}(\dot B^s_{2,1})}
=\Bigl\|\Bigl\langle \frac{Re}{Ma^2}\,\tau\Bigr\rangle^{\frac{s_0+s}2}
\Bigl(\frac{\varrho-\varrho_\infty}{\varrho_\infty},Ma\, u\Bigr)\Bigr\|^{\ell,\frac{Re}{Ma}}_{L^\infty_{\frac{Re}{Ma}\wt t}
(\dot B^s_{2,1})},
$$
and similar relations for the other terms of $\cD_p(t).$
\medbreak
This leads to  the following statement:
\begin{thm}\label{thm4.1} Let $d,$ $p,$ $\alpha$  and $s_0$ be as in Theorem \ref{thm2.1}.
There exists a constant $c$ depending only on $p$ and $d$ such that if
$$
\Big\|\frac{\varrho_0-\varrho_\infty}{\varrho_\infty}\Big\|_{\dot{B}^{-s_{0}}_{2,\infty}}^{\ell,\frac{Re}{Ma}}
+Ma\|u_0\|_{\dot{B}^{-s_{0}}_{2,\infty}}^{\ell,\frac{Re}{Ma}}\leq c\biggl(\frac{Ma}{Re}\biggr)^{\frac{2d}p}
\quad\!\!\hbox{and}\!\!\quad
\frac1{\varrho_\infty}\|\nabla\varrho_0\|^{h,\frac{Re}{Ma}}_{\dot B^{\frac dp-1}_{p,1}}+Re\,\|u_0\|^{h,\frac{Re}{Ma}}_{\dot B^{\frac dp-1}_{p,1}}
\leq c_0,
$$
then  System \eqref{R-E1} has  a unique solution $(\varrho,u)$ satisfying the regularity
properties of Theorem \ref{thm1.1}. Furthermore, we have   for all $t\geq0,$
$$
\displaylines{
\sup_{s\in(-s_{0},2]}\Big\|\Big\langle\frac{Re}{Ma^2}\,\tau\rangle^{\frac {s_0+s}2}\Bigl(\frac{\varrho-\varrho_\infty}{\varrho_\infty},Ma\,u\Bigr)\Big\|_{L^\infty_t(\dot B^s_{2,1})}^{\ell,\frac{Re}{Ma}}
+\Big\|\Big\langle\frac{Re}{Ma^2}\tau\Big\rangle^{\alpha}\Bigl(\frac{\nabla\varrho}{\varrho_\infty},Re\,u\Bigr)\|_{\wt L^\infty_t(\dot B^{\frac dp-1}_{p,1})}^{h,\frac{Re}{Ma}}\hfill\cr\hfill
+\|\tau\nabla  u\|_{\wt L^\infty_t(\dot B^{\frac dp}_{p,1})}^{h,\frac{Re}{Ma}}
\lesssim \biggl(\frac{Re}{Ma}\biggr)^{\frac{2d}p}  \Big\|\Bigl(\frac{\varrho_0-\varrho_\infty}{\varrho_\infty},Ma\, u_0\Bigr)\Big\|_{\dot{B}^{-s_{0}}_{2,\infty}}^{\ell,\frac{Re}{Ma}}
+ \Bigl\|\Bigl(\frac{\nabla\varrho_0}{\varrho_\infty}, Re\, u_0\Bigr)\Bigr\|^{h,\frac{Re}{Ma}}_{\dot B^{\frac dp-1}_{p,1}}.
}
$$\end{thm}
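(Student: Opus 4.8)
The whole point is that Theorem~\ref{thm2.1} has already been established under the normalization $\varrho_\infty=c_\infty=\nu_\infty=1$, so the plan is simply to reduce Theorem~\ref{thm4.1} to that case by means of the change of unknowns \eqref{eq:change} and the scaling invariance \eqref{eq:scaling} of homogeneous Besov norms, keeping careful track of the various powers of the Mach and Reynolds numbers.

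First I would pass to the rescaled unknowns $(\wt a,\wt u)$ of \eqref{eq:change}, with spatial dilation parameter $\mu\triangleq \nu_\infty/(\varrho_\infty c_\infty)$ and time dilation parameter $\nu_\infty/(\varrho_\infty c_\infty^2)$. A direct substitution in \eqref{R-E1} shows that the density $\wt\varrho\triangleq\varrho_\infty(1+\wt a)$ together with $\wt u$ solves \eqref{R-E1} with $\varrho_\infty$, $P'(\varrho_\infty)$ and $\nu_\infty$ all equal to $1$ and with suitably renormalized pressure and viscosity laws (whose precise form is irrelevant, since all the analysis only uses that they are smooth and vanish at $0$). The data become $\wt a_0=\varrho_0(\mu\,\cdot)/\varrho_\infty-1$ and $\wt u_0=u_0(\mu\,\cdot)/c_\infty$. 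I would then invoke Theorem~\ref{thm2.1} for $(\wt a,\wt u)$: under smallness of $\wt\cD_{p,0}\triangleq\|(\wt a_0,\wt u_0)\|^\ell_{\dot B^{-s_0}_{2,\infty}}$, of the corresponding $\cX_{p,0}$ (through Theorem~\ref{thm1.1}) and of $\|(\nabla\wt a_0,\wt u_0)\|^h_{\dot B^{d/p-1}_{p,1}}$, one gets \eqref{R-E10} for the functional $\cD_p$ built on $(\wt a,\wt u)$.

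The core of the argument is the translation of each of the three blocks of $\cD_p$, and of $\cD_{p,0}$, $\cX_{p,0}$ and the high-frequency data term, back into the original variables. Two scaling effects combine. On the one hand, a dilation $z\mapsto z(\mu\,\cdot)$ multiplies $\|z\|_{\dot B^\sigma_{q,1}}$ by $\mu^{\sigma-d/q}$ by \eqref{eq:scaling} (in particular it multiplies the $\dot B^{-s_0}_{2,\infty}$ norm by $\mu^{-s_0-d/2}$, and $s_0+d/2=2d/p$), the time rescaling turns the weight $\langle\wt\tau\rangle$ into $\langle\tfrac{Re}{Ma^2}\tau\rangle$, and the prefactors $1/\varrho_\infty$, $Ma$, $Re$ decorating $\varrho-\varrho_\infty$, $u$, $\nabla\varrho$, $u$ in the various blocks come from the normalizations in \eqref{eq:change} together with $\nabla\wt a=\mu\,(\nabla\varrho/\varrho_\infty)(\mu\,\cdot)$. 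On the other hand, the dilation sends the fixed low/high cut-off from frequency $2^{k_0}$ to $\mu^{-1}2^{k_0}=\tfrac{Re}{Ma}\,2^{k_0}$, which is exactly why the flexible norms $\|\cdot\|^{\ell,\zeta}_{\dot B^s_{2,1}}$ (and their high-frequency analogues) with $\zeta=Re/Ma$ are used; here one must observe that all the estimates of Section~\ref{sec:3}, and the smallness thresholds, are stable under an $O(1)$ shift of $k_0$ --- this is precisely why a small overlap between low and high frequencies was built into the definitions and why the constants $c_0,\ep$ of the energy step depend only on a lower bound for $k_0$. Combining these two dictionaries turns the identity displayed just before the statement, together with its analogues for the $\dot B^{d/p-1}_{p,1}$ and $\tau\nabla u$ blocks and for $\cD_{p,0}$, into the claimed equivalences; using also the embedding $\|(\wt a_0,\wt u_0)\|^\ell_{\dot B^{d/2-1}_{2,1}}\lesssim\|(\wt a_0,\wt u_0)\|^\ell_{\dot B^{-s_0}_{2,\infty}}$ (which is how the $\dot B^{d/2-1}_{2,1}$ part of $\cX_{p,0}$ is controlled at the end of the proof of Theorem~\ref{thm2.1}), one recovers the smallness hypotheses of Theorem~\ref{thm4.1}, the factor $(Ma/Re)^{2d/p}$ on the right-hand side of the first condition being exactly the scaling weight picked up by the negative-index norm under the dilation of \eqref{eq:change}.

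The main obstacle is thus entirely in the bookkeeping: one has to match scrupulously the power $(Re/Ma)^{2d/p}$ in the low-frequency smallness condition and in the conclusion, to check that $\tfrac1{\varrho_\infty}\|\nabla\varrho_0\|^{h,Re/Ma}_{\dot B^{d/p-1}_{p,1}}+Re\,\|u_0\|^{h,Re/Ma}_{\dot B^{d/p-1}_{p,1}}\le c_0$ is exactly the rescaled form of the required smallness of $\|(\nabla a_0,u_0)\|^h_{\dot B^{d/p-1}_{p,1}}$, and to arrange the auxiliary cut-off $k_0$ of the normalized problem so that, after undoing the dilation, the low/high split occurs at $\zeta 2^{k_0}$ with $\zeta=Re/Ma$. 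Once these identifications are in place, Theorem~\ref{thm4.1} follows directly from Theorems~\ref{thm1.1} and~\ref{thm2.1}.
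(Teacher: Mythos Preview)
Your proposal is correct and follows exactly the paper's own approach: the paper proves Theorem~\ref{thm4.1} by reducing to the normalized case of Theorem~\ref{thm2.1} via the change of unknowns \eqref{eq:change} and the scaling invariance \eqref{eq:scaling} of Besov norms, tracking how the low/high frequency cut-off becomes $\zeta 2^{k_0}$ with $\zeta=Re/Ma$ and how the $\dot B^{-s_0}_{2,\infty}$ norm picks up the factor $(Re/Ma)^{2d/p}$. Your write-up is in fact more detailed than the paper's, which simply states the key scaling identity for the low-frequency block and remarks that ``similar relations'' hold for the other terms of $\cD_p(t)$.
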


Just to compare our results with those of the prior literature on decay estimates,
let us now state the $L^q-L^r$ type decay rates that we can get from our main theorem.
For notational simplicity, we assume that $\varrho_\infty=1$ and that $Re=Ma=1.$
\begin{cor}\label{cor1.1}  The solution $(\varrho,u)$  constructed in  Theorem \ref{thm2.1} satisfies
$$\displaylines{
\|\Lambda^{s}(\varrho-1)\|_{L^p}%\leq C \|\Lambda^{s}(\varrho-1)\|_{\dot{B}^{0}_{p,1}}
\leq C\bigl(\cD_{p,0}
+\|(\nabla a_0,u_0)\|_{\dot B^{\frac dp-1}_{p,1}}^h\bigr)\langle t\rangle^{-\frac {s_0+s}2}
\  \hbox{ if } \  -s_0<s\leq\min\Bigl(2,\frac dp\Bigr),\cr
\|\Lambda^{s}u\|_{L^p}%\leq C\|\Lambda^{s}u\|_{\dot{B}^{0}_{p,1}}
\leq C\bigl(\cD_{p,0}+\|(\nabla a_0,u_0)\|_{\dot B^{\frac dp-1}_{p,1}}^h\bigr)\langle t\rangle^{-\frac {s_{0}+s}2}
\ \hbox{ if } \  -s_0<s\leq\min\Bigl(2,\frac dp-1\Bigr),}
$$
 where the fractional derivative
 operator $\Lambda^{\ell}$ is defined by $\Lambda^{\ell}f\triangleq\mathcal{F}^{-1}(|\cdot|^{\ell}\mathcal{F}f)$.
\end{cor}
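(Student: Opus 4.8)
The plan is to deduce everything from the time-weighted bound on $\cD_p$ established in Theorem~\ref{thm2.1}. Writing $a\triangleq\varrho-1$ and using the Besov embedding $\dot B^0_{p,1}\hookrightarrow L^p$ together with the fact that $\Lambda^s$ is a homogeneous Fourier multiplier of degree $s$ (so that $\|\Lambda^s f\|_{L^p}\lesssim\|f\|_{\dot B^s_{p,1}}$ for every admissible $s$), I would apply this estimate separately to $a^\ell,$ $a^h$ and, likewise, to $u^\ell,$ $u^h.$ It then suffices to bound $\|a^\ell(t)\|_{\dot B^s_{p,1}}$ and $\|a^h(t)\|_{\dot B^s_{p,1}}$ by $\langle t\rangle^{-\frac{s_0+s}2}\cD_p(t)$ for $-s_0<s\leq\min(2,\frac dp),$ to prove the analogous statement for $u$ with $\frac dp$ replaced by $\frac dp-1,$ and then to invoke \eqref{R-E10}, which gives $\cD_p(t)\lesssim\cD_{p,0}+\|(\nabla a_0,u_0)\|^h_{\dot B^{\frac dp-1}_{p,1}}$ uniformly in $t.$

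For the low-frequency pieces I would use Bernstein's inequality: since $p\geq2,$ on frequencies $\lesssim 2^{k_0}$ one has $\|a^\ell\|_{\dot B^s_{p,1}}\lesssim\|a^\ell\|_{\dot B^{\sigma}_{2,1}}$ with $\sigma\triangleq\min(s+\frac d2-\frac dp,2).$ The capping of the $L^2$-based index at $2$ is deliberate: the first term of $\cD_p$ controls $\|(a,u)^\ell\|_{\dot B^{\sigma}_{2,1}}$ precisely for $\sigma\in(-s_0,2],$ and one checks, using $s>-s_0$ and $\frac d2\geq\frac dp,$ that $\sigma$ does lie in this interval. Hence $\|a^\ell(t)\|_{\dot B^{\sigma}_{2,1}}\lesssim\langle t\rangle^{-\frac{s_0+\sigma}2}\cD_p(t),$ and since $\sigma\geq s$ (again by $p\geq2$ and $s\leq2$) this is even faster than the claimed $\langle t\rangle^{-\frac{s_0+s}2}.$ The estimate for $u^\ell$ is word-for-word the same, so the low frequencies are never the bottleneck.

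For the high-frequency pieces I would exploit that on $\{k\geq k_0-1\}$ a larger Besov index dominates: for $s\leq\frac dp$ one gets $\|a^h\|_{\dot B^s_{p,1}}\lesssim\|a^h\|_{\dot B^{\frac dp}_{p,1}}\thickapprox\|\nabla a\|^h_{\dot B^{\frac dp-1}_{p,1}},$ so the second term of $\cD_p$ yields $\|a^h(t)\|_{\dot B^s_{p,1}}\lesssim\langle t\rangle^{-\alpha}\cD_p(t),$ and likewise $\|u^h(t)\|_{\dot B^s_{p,1}}\lesssim\langle t\rangle^{-\alpha}\cD_p(t)$ for $s\leq\frac dp-1.$ It remains to verify $\alpha\geq\frac{s_0+s}2$ on the stated range, which is a short arithmetic check from $s_0=d(\frac2p-\frac12)$ and $\alpha=\frac{s_0}2+\min(2,\frac d4+\frac12-\varepsilon),$ using only $s\leq\min(2,\frac dp)$ (resp. $s\leq\min(2,\frac dp-1)$) and $p\geq2.$ Adding the low- and high-frequency contributions and substituting \eqref{R-E10} completes the argument. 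There is no serious obstacle here — the proof is essentially bookkeeping — but the one point to handle with care is the low-frequency Bernstein step: one should not attempt to reach the index $s+\frac d2-\frac dp$ in $\dot B^{\,\cdot}_{2,1}$ when it exceeds $2$ and thus leaves the range covered by $\cD_p;$ capping at $2$ is harmless precisely because $p\geq2$ supplies surplus regularity, hence surplus decay.
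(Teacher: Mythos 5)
Your proof is correct and follows essentially the same route as the paper: split into low and high frequencies, transfer the low-frequency $\dot B^s_{p,1}$ norm to the $\dot B^\cdot_{2,1}$ norms controlled by the first term of $\cD_p$ via Bernstein, dominate the high-frequency part by $\|a^h\|_{\dot B^{d/p}_{p,1}}\thickapprox\|\nabla a\|^h_{\dot B^{d/p-1}_{p,1}}$ (resp. $\|u^h\|_{\dot B^{d/p-1}_{p,1}}$), check $\alpha\geq\frac{s_0+s}2$ on the stated range, and conclude with \eqref{R-E10}. The only cosmetic difference is that the paper lands directly on the index $s\in(-s_0,2]$ in $\dot B^s_{2,1}$ rather than on your capped index $\min(s+\frac d2-\frac dp,2)$, but both are valid.
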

\begin{proof}
Recall that for  functions with compactly supported Fourier transform,
 one has the embedding $\dot{B}^{s}_{2,1}\hookrightarrow\dot{B}^{s-d(1/2-1/p)}_{p,1} \hookrightarrow \dot{B}^{s}_{p,1}$
for $p\geq2.$
Hence, we may write
$$
\sup_{t\in[0,T]} \langle t\rangle^{\frac {s_{0}+s}2}\|\Lambda^s a\|_{\dot B^0_{p,1}}\lesssim
 \|\langle t\rangle^{\frac {s_{0}+s}2}a\|_{L^\infty_T(\dot B^s_{2,1})}^\ell
 +  \|\langle t\rangle^{\frac {s_{0}+s}2}a\|_{L^\infty_T(\dot B^s_{p,1})}^h.
$$
If follows from Inequality \eqref{R-E10} and the definition of $\cD_p$ and $\alpha$ that
$$
  \|\langle t\rangle^{\frac {s_{0}+s}2}a\|_{L^\infty_T(\dot B^s_{2,1})}^\ell\lesssim \cD_{p,0}
+\|(\nabla a_0,u_0)\|_{\dot B^{\frac dp-1}_{p,1}}^h
\quad\hbox{if }\ -s_0<s\leq 2
$$
and that, because we have $\alpha\geq\frac{s_0+s}2$ for all $s\leq\min(2,d/p),$
$$
  \|\langle t\rangle^{\frac {s_{0}+s}2}a\|_{L^\infty_T(\dot B^s_{p,1})}^h
  \lesssim \cD_{p,0}
+\|(\nabla a_0,u_0)\|_{\dot B^{\frac dp-1}_{p,1}}^h\quad\hbox{if }\ \  s\leq d/p.$$
This yields the desired result for $a.$
Bounding the velocity $u$ works almost the same, except that we need the stronger condition $s\leq d/p-1$
for the high frequencies.
This completes the proof of Corollary \ref{cor1.1}.
\end{proof}

\begin{rem}
Taking  $p=2$ (hence $s_0=d/2$) and $s=0$ in Corollary \ref{cor1.1} leads back to  the standard optimal   $L^{1}$-$L^{2}$ decay rate of $(a,u).$ Note however that our  estimates also hold in the  general $L^p$ critical framework. Additionally,
the regularity  index $s$ can take both negative and nonnegative values, rather than only nonnegative integers, which   improves the classical decay results  in high Sobolev regularity, such as \cite{MN2} or \cite{P}.
\end{rem}

One can get   more $L^{q}$-$L^{r}$ decay estimates, as a consequence
of the following  Gagliardo-Nirenberg type inequalities
which parallel the work of Sohinger and Strain \cite{SS} (see also  \cite{BCD}, Chap. 2, and  \cite{XK2}):
\begin{prop}\label{prop2.10}
The following interpolation inequality  holds true:
\begin{eqnarray*}
\|\Lambda^{\ell}f\|_{L^{r}}\lesssim \|\Lambda^{m}f\|^{1-\theta}_{L^{q}}\|\Lambda^{k}f\|^{\theta}_{L^{q}},
\end{eqnarray*}
whenever  $0\leq\theta\leq1$,  $1\leq q\leq r\leq\infty$ and $$\ell+d\Big(\frac{1}{q}-\frac{1}{r}\Big)=m(1-\theta)+k\theta.$$
\end{prop}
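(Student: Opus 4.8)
The plan is to derive this from the homogeneous Littlewood--Paley decomposition by a standard frequency-splitting and optimization argument, in the spirit of \cite{SS} and \cite{BCD}, Chap.~2. First I would reduce to the essential case: up to exchanging $m$ and $k$ and replacing $\theta$ by $1-\theta$ (a symmetry that leaves both the hypothesis and the conclusion unchanged) one may assume $m\leq k$; then, writing $\mu\triangleq\ell+d(\frac1q-\frac1r),$ the relation $\ell+d(\frac1q-\frac1r)=m(1-\theta)+k\theta$ becomes $\mu=(1-\theta)m+\theta k,$ so that $m\leq\mu\leq k.$ I would dispose of the endpoints $m=k$ and $\theta\in\{0,1\}$ separately, as they reduce either to the trivial case $q=r$ or to a classical Sobolev embedding of homogeneous potential spaces; the main argument then concerns $m<k$ and $0<\theta<1.$

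Next I would decompose $\Lambda^\ell f=\sum_{j\in\Z}\ddj\Lambda^\ell f$ and, using the Bernstein inequality (valid since $r\geq q$) together with the localization of $\ddj f$ in an annulus $\{|\xi|\approx 2^j\},$ bound $\|\ddj\Lambda^\ell f\|_{L^r}\lesssim 2^{j\mu}\|\ddj f\|_{L^q},$ hence $\|\Lambda^\ell f\|_{L^r}\lesssim\sum_{j\in\Z}2^{j\mu}\|\ddj f\|_{L^q}.$ Then, from the scaling and localization properties of the dyadic blocks, $\|\ddj f\|_{L^q}\approx 2^{-js}\|\ddj\Lambda^s f\|_{L^q}$ for every $s,$ so the uniform $L^q$-boundedness of $\ddj$ yields the two key bounds $\|\ddj f\|_{L^q}\lesssim 2^{-jm}\|\Lambda^m f\|_{L^q}$ and $\|\ddj f\|_{L^q}\lesssim 2^{-jk}\|\Lambda^k f\|_{L^q}.$ Splitting the sum at an arbitrary integer $N$ and using the first bound for $j\leq N$ (the series $\sum_{j\leq N}2^{j(\mu-m)}$ converging because $\mu>m$) and the second for $j>N$ (the series $\sum_{j>N}2^{j(\mu-k)}$ converging because $\mu<k$), I would get
$$\|\Lambda^\ell f\|_{L^r}\lesssim 2^{N(\mu-m)}\|\Lambda^m f\|_{L^q}+2^{N(\mu-k)}\|\Lambda^k f\|_{L^q}.$$

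Finally I would optimize in $N$: discarding the trivial cases $\Lambda^m f\equiv0$ or $\Lambda^k f\equiv0,$ one takes for $N$ the integer nearest to $\frac1{k-m}\log_2\!\bigl(\|\Lambda^k f\|_{L^q}/\|\Lambda^m f\|_{L^q}\bigr),$ which balances the two terms up to a fixed constant; since $\frac{\mu-m}{k-m}=\theta,$ both contributions then have size $\|\Lambda^m f\|_{L^q}^{1-\theta}\|\Lambda^k f\|_{L^q}^{\theta},$ which is the claim. I do not expect a serious obstacle here: the whole subtlety lies in the bookkeeping of the hypotheses --- Bernstein's inequality forces $r\geq q,$ and the two geometric series converge exactly when $m<\mu<k,$ i.e. when $m<k$ and $0<\theta<1,$ which is why the endpoint cases ($\theta\in\{0,1\}$ or $m=k$) must be singled out at the start and handled by elementary means.
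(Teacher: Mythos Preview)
Your proof is correct and is exactly the standard frequency-splitting and optimization argument that the paper has in mind when it cites \cite{SS}, \cite{BCD} Chap.~2, and \cite{XK2}; the paper itself gives no proof of this proposition, only these references. One minor caveat: your appeal to ``classical Sobolev embedding'' for the endpoints $\theta\in\{0,1\}$ (or $m=k$) with $q<r$ actually fails when $q=1$ or $r=\infty$ (the critical Riesz potential embeddings break down there), so strictly speaking the statement as written is slightly imprecise at those borderline values; this is a defect of the proposition rather than of your argument, and it does not affect the paper's only application in Corollary~\ref{cor1.2}, which takes $q=2$ and $0<\theta<1,$ where your proof is complete.
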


\begin{cor}\label{cor1.2}
Let the assumptions of Theorem \ref{thm2.1} be fulfilled with $p=2$. Then the corresponding solution $(\varrho,u)$  satisfies
\begin{eqnarray}
\|\Lambda^{\ell}(\varrho-1,u)\|_{L^r}\leq C\bigl(\cD_{2,0}
+\|(\nabla a_0,u_0)\|_{\dot B^{\frac d2-1}_{2,1}}^h\bigr)\bigl \langle t\rangle^{-\frac d2(1-\frac1r)-\frac\ell2}, \label{R-E13}
\end{eqnarray}
for all $2\leq r\leq\infty$ and $\ell\in\R$ satisfying
$-\frac d2<\ell+d\big(\frac12-\frac1r\big) < \min\big(2,\frac d2-1\big)\cdotp$
\end{cor}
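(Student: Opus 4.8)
The plan is to obtain \eqref{R-E13} from the $L^2$ decay estimates of Corollary \ref{cor1.1} — specialized to $p=2$, so that $s_0=d/2$ — together with the Gagliardo–Nirenberg interpolation inequality of Proposition \ref{prop2.10}. Set $\sigma\triangleq\ell+d\bigl(\frac12-\frac1r\bigr)$, so that the hypothesis of the corollary reads $-\frac d2<\sigma<\min\bigl(2,\frac d2-1\bigr)$. For $f$ equal to $a\triangleq\varrho-1$ or to any component of $u$, Corollary \ref{cor1.1} provides, for every $s$ with $-\frac d2<s\le\min\bigl(2,\frac d2-1\bigr)$,
$$
\|\Lambda^{s}f(t)\|_{L^2}\le C\bigl(\cD_{2,0}+\|(\nabla a_0,u_0)\|^h_{\dot B^{\frac d2-1}_{2,1}}\bigr)\langle t\rangle^{-\frac{s_0+s}2},
$$
where we use the range $s\le\min(2,\frac d2-1)$ — the more restrictive of the two appearing in Corollary \ref{cor1.1} — uniformly for both unknowns.

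First I would pick two exponents $m,k$ with $-\frac d2<m<\sigma<k\le\min\bigl(2,\frac d2-1\bigr)$; this is possible precisely because $\sigma$ lies in the \emph{open} interval $\bigl(-\frac d2,\min(2,\frac d2-1)\bigr)$, so there is room on both sides. Applying Proposition \ref{prop2.10} with $q=2$, the given $r$, and $\theta\in(0,1)$ determined by $\sigma=(1-\theta)m+\theta k$ yields
$$
\|\Lambda^{\ell}f(t)\|_{L^r}\lesssim\|\Lambda^{m}f(t)\|_{L^2}^{1-\theta}\,\|\Lambda^{k}f(t)\|_{L^2}^{\theta}.
$$
Inserting the $L^2$ bound above, the prefactor multiplies out to $C\bigl(\cD_{2,0}+\|(\nabla a_0,u_0)\|^h_{\dot B^{\frac d2-1}_{2,1}}\bigr)$, while the time weights combine as
$$
\langle t\rangle^{-\frac{(1-\theta)(s_0+m)}2}\langle t\rangle^{-\frac{\theta(s_0+k)}2}=\langle t\rangle^{-\frac{s_0+\sigma}2}=\langle t\rangle^{-\frac d2(1-\frac1r)-\frac{\ell}2},
$$
since $s_0+\sigma=\frac d2+\ell+d\bigl(\frac12-\frac1r\bigr)=d\bigl(1-\frac1r\bigr)+\ell$. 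Summing over the $d$ components of $u$ and combining with the estimate for $a$ then gives \eqref{R-E13}.

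The only delicate point is the endpoint $r=\infty$: there $\sigma=\ell+\frac d2$, and one cannot invoke the borderline embedding $\dot H^{\sigma}(\R^d)\hookrightarrow\dot W^{\ell,\infty}(\R^d)$ (equivalently $\dot H^{d/2}\hookrightarrow L^\infty$), which fails. It is therefore essential to use the genuine interpolation inequality of Proposition \ref{prop2.10} with a strict gap $m<\sigma<k$, which — as observed above — is available because the hypothesis forces $\sigma$ to be strictly interior to the admissible interval. Beyond this, the argument is just bookkeeping of exponents; in particular no Littlewood–Paley decomposition is needed at this stage, since Corollary \ref{cor1.1} already furnishes the full $L^2$ norm of $\Lambda^{s}(a,u)$.
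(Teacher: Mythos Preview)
Your proof is correct and follows essentially the same approach as the paper's: apply Corollary~\ref{cor1.1} with $p=2$ at two admissible regularity levels and interpolate via Proposition~\ref{prop2.10} with $q=2$. The paper makes the specific choice $m=\min(2,\frac d2-1)$ and $k=-\frac d2+\ep$ (roles of $m$ and $k$ swapped relative to yours), but your freer choice of two exponents bracketing~$\sigma$ works just as well, and your remark on why the $r=\infty$ endpoint still goes through is a welcome clarification.
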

\begin{proof}
It follows from Corollary \ref{cor1.1}  with $p=2,$ and Proposition \ref{prop2.10} with $q=2,$ $m=\min(2,\frac d2-1)$
and $k=-\frac d2+\ep$ with  $\ep$ small enough. 
Indeed, if we define  $\theta$ by the relation 
$$
k\theta+m(1-\theta)=\ell+d\Bigl(\frac12-\frac1r\Bigr),
$$
then one can take  $\ep$ so small as $\theta$ to be  in $(0,1).$ Therefore we have
\begin{eqnarray}\label{R-E106}
\|\Lambda^{\ell}(a,u)\|_{L^{r}} &\lesssim &\|\Lambda^{m}(a,u)\|_{L^2}^{1-\theta}\|\Lambda^{k}(a,u)\|^{\theta}_{L^2}
\nonumber\\& \lesssim & \bigl(\cD_{2,0}
+\|(\nabla a_0,u_0)\|_{\dot B^{\frac d2-1}_{2,1}}^h\bigr)\Big\{\langle t\rangle^{-\frac d4-\frac m2}\Big\}^{1-\theta}
\Big\{\langle t\rangle^{-\frac d4-\frac k2}\Big\}^{\theta}
\nonumber\\&=& \bigl(\cD_{2,0}
+\|(\nabla a_0,u_0)\|_{\dot B^{\frac d2-1}_{2,1}}^h\bigr)\langle t\rangle^{-\frac {d}{4}-\frac{m}{2}(1-\theta)-\frac{k}{2}\theta},
\end{eqnarray}
 which completes the proof of the corollary.
\end{proof}

%%%%%%%%%%%%%%%%%%%%%%%%%%%%%%%%%%%

\begin{appendix}
\section{Littlewood-Paley decomposition and Besov spaces}\setcounter{equation}{0}

We here recall   basic properties
 of Besov spaces and paradifferential calculus that have been used repeatedly in the paper
 (more details may be found  in e.g. Chap. 2 and 3 of \cite{BCD}).
  We also prove  some  slightly less classical product laws, and  the commutator estimate   \eqref{R-E92bis}.
\medbreak
As mentioned in the introduction, homogeneous  Besov spaces possess
 scaling invariance properties. In the $\R^d$ case, they read for any $\sigma\in\R$ and $(p,r)\in[1,+\infty]^2$:
\begin{equation}\label{eq:scaling}
C^{-1}\lambda^{\sigma-\frac dp} \|f\|_{\dot B^\sigma_{p,r}}\leq
\|f(\lambda\cdot)\|_{\dot B^\sigma_{p,r}}\leq C\lambda^{\sigma-\frac dp}  \|f\|_{\dot B^\sigma_{p,r}},\qquad\lambda>0,
\end{equation}
where the constant $C$ depends only on $\sigma,$ $p$ and on the dimension $d.$
\medbreak
The following embedding properties have been used several times:
\begin{itemize}
  \item For any $p\in[1,\infty]$ we have the  continuous embedding
$$\dot B^0_{p,1}\hookrightarrow L^p\hookrightarrow \dot B^0_{p,\infty}.$$
\item If  $\sigma\in\R,$ $1\leq p_1\leq p_2\leq\infty$ and $1\leq r_1\leq r_2\leq\infty,$
  then $\dot B^{\sigma}_{p_1,r_1}\hookrightarrow
  \dot B^{\sigma-d(\frac1{p_1}-\frac1{p_2})}_{p_2,r_2}.$
  \item The space  $\dot B^{\frac dp}_{p,1}$ is continuously embedded in   the set  of
bounded  continuous functions (going to $0$ at infinity if    $p<\infty$).
  \end{itemize}
Let us also mention the following  interpolation inequality
 that is   satisfied whenever
   $1\leq p,r_1,r_2,r\leq\infty,$ $\sigma_1\not=\sigma_2$ and $\theta\in(0,1)$:
  $$
  \|f\|_{\dot B^{\theta\sigma_2+(1-\theta)\sigma_1}_{p,r}}\lesssim\|f\|_{\dot B^{\sigma_1}_{p,r_1}}^{1-\theta}
  \|f\|_{\dot B^{\sigma_2}_{p,r_2}}^\theta.
  $$
The following  product estimates in Besov spaces    play a fundamental  role in our analysis of  the bilinear
terms of \eqref{R-E3}.
\begin{prop}\label{prop2.4}
Let $\sigma>0$ and $1\leq p,r\leq\infty$. Then $\dot{B}^{\sigma}_{p,r}\cap L^{\infty}$ is an algebra and
$$
\|fg\|_{\dot{B}^{\sigma}_{p,r}}\lesssim \|f\|_{L^{\infty}}\|g\|_{\dot{B}^{\sigma}_{p,r}}+\|g\|_{L^{\infty}}\|f\|_{\dot{B}^{\sigma}_{p,r}}.
$$
Let the real numbers $\sigma_{1},$ $\sigma_{2},$ $p_1$  and $p_2$ be such that
$$
\sigma_1+\sigma_2>0,\quad \sigma_1\leq\frac d{p_1},\quad\sigma_2\leq\frac d{p_2},\quad
\sigma_1\geq\sigma_2,\quad\frac1{p_1}+\frac1{p_2}\leq1.
$$
Then we have
$$\|fg\|_{\dot{B}^{\sigma_{2}}_{q,1}}\lesssim \|f\|_{\dot{B}^{\sigma_{1}}_{p_1,1}}\|g\|_{\dot{B}^{\sigma_{2}}_{p_2,1}}\quad\hbox{with}\quad
\frac1{q}=\frac1{p_1}+\frac1{p_2}-\frac{\sigma_1}d\cdotp$$
Finally, for exponents $\sigma>0$ and $1\leq p_1,p_2,q\leq\infty$ satisfying
$$
\frac{d}{p_1}+\frac{d}{p_2}-d\leq \sigma \leq\min\biggl(\frac d{p_1},\frac d{p_2}\biggr)\quad\hbox{and}\quad \frac 1q=\frac 1{p_1}+\frac 1{p_2}-\frac\sigma d,
$$
we have
$$\|fg\|_{\dot{B}^{-\sigma}_{q,\infty}}\lesssim
\|f\|_{\dot{B}^{\sigma}_{p_1,1}}\|g\|_{\dot{B}^{-\sigma}_{p_2,\infty}}.
$$
\end{prop}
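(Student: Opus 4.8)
\emph{Approach.} Everything reduces to Bony's homogeneous paraproduct decomposition
$$
fg = T_fg + T_gf + R(f,g),\qquad
T_fg\triangleq\sum_{j\in\Z}\dot S_{j-1}f\,\ddj g,\qquad
R(f,g)\triangleq\sum_{j\in\Z}\ddj f\,\bigl(\dot\Delta_{j-1}+\ddj+\dot\Delta_{j+1}\bigr)g,
$$
combined with two standard ingredients: the spectral localisations (the $j$-th term of $T_fg$ has Fourier support in an annulus $\{|\xi|\sim 2^j\}$, that of $R(f,g)$ in a ball $\{|\xi|\lesssim 2^j\}$), and the low-frequency bound $\|\dot S_{j-1}f\|_{L^q}\lesssim 2^{j(d/p-d/q-\sigma)}\|f\|_{\dot B^{\sigma}_{p,1}}$, valid whenever $q\ge p$ and $d/p-d/q\ge\sigma$ (the exponent of $2^j$ being then $\ge0$, and equal to $0$ precisely when $\frac1q=\frac1p-\frac\sigma d$, which yields a bound uniform in $j$; the case $q=\infty$, $\sigma=0$ is just $\|\dot S_{j-1}f\|_{L^\infty}\lesssim\|f\|_{L^\infty}$). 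This follows at once from Bernstein's inequality and the monotonicity/summation of $\sum_{j'\le j-2}2^{j'(d/p-d/q-\sigma)}$.

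\emph{Algebra property.} For $T_fg$ one writes $\ddk(T_fg)=\sum_{|j-k|\le N_0}\ddk(\dot S_{j-1}f\,\ddj g)$, uses $\|\dot S_{j-1}f\,\ddj g\|_{L^p}\le\|f\|_{L^\infty}\|\ddj g\|_{L^p}$, multiplies by $2^{k\sigma}$ and takes the $\ell^r(\Z)$ norm; since only finitely many $j$ contribute to each $k$, no sign condition on $\sigma$ is needed, and one gets $\|T_fg\|_{\dot B^{\sigma}_{p,r}}\lesssim\|f\|_{L^\infty}\|g\|_{\dot B^{\sigma}_{p,r}}$, and symmetrically for $T_gf$. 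For $R(f,g)$ one writes $\ddk R(f,g)=\sum_{j\ge k-N_0}\ddk(\cdots)$ with $\|\ddj f\,(\dot\Delta_{j-1}+\ddj+\dot\Delta_{j+1})g\|_{L^p}\le\|\ddj f\|_{L^p}\|g\|_{L^\infty}$; after factoring out $2^{(k-j)\sigma}$ the sum over $j\ge k-N_0$ converges \emph{precisely because $\sigma>0$}, giving $\|R(f,g)\|_{\dot B^{\sigma}_{p,r}}\lesssim\|g\|_{L^\infty}\|f\|_{\dot B^{\sigma}_{p,r}}$. Adding the three contributions yields the stated inequality.

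\emph{The two product laws with a Lebesgue gain.} The mechanism is identical; one only tunes the intermediate Lebesgue exponents so that each Bernstein loss is absorbed by a hypothesis. For the second inequality: bound $T_fg$ by placing $\dot S_{j-1}f$ in $L^{q_1}$ with $\frac1{q_1}=\frac1{p_1}-\frac{\sigma_1}d\ge0$ (this is where $\sigma_1\le d/p_1$ is used, and it gives a bound uniform in $j$) so that $\dot S_{j-1}f\,\ddj g\in L^{q}$ directly; bound $T_gf$ by placing $\dot S_{j-1}g$ in an $L^{q_2}$ chosen according to the sign of $\sigma_2$ (uses $\sigma_2\le d/p_2$), then applying Bernstein on the annulus to reach $L^{q}$ at the admissible cost $2^{j(\sigma_1-\sigma_2)}$, which is licit exactly because $\sigma_1\ge\sigma_2$; bound $R(f,g)$ by first landing it in $\dot B^{\sigma_1+\sigma_2}_{m,1}$ with $\frac1m=\frac1{p_1}+\frac1{p_2}\le1$ — the classical remainder estimate, valid precisely because $\sigma_1+\sigma_2>0$ — and then using the embedding $\dot B^{\sigma_1+\sigma_2}_{m,1}\hookrightarrow\dot B^{\sigma_2}_{q,1}$, whose regularity shift $d(\frac1m-\frac1q)$ equals exactly $\sigma_1$. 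The third inequality, with target $\dot B^{-\sigma}_{q,\infty}$ and $\sigma\ge0$, is treated the same way: $T_fg$ and $T_gf$ are estimated via $\sigma\le d/p_1$ and $\sigma\le d/p_2$ respectively, while the remainder only needs $\|R(f,g)\|_{\dot B^0_{m,\infty}}\lesssim\|f\|_{\dot B^{\sigma}_{p_1,1}}\|g\|_{\dot B^{-\sigma}_{p_2,\infty}}$ — a borderline ($\sigma_1+\sigma_2=0$) estimate that still holds because the $\ell^1$ summability of $f$ offsets the mere $\ell^\infty$ bound on $g$ — followed by $\dot B^0_{m,\infty}\hookrightarrow\dot B^{-\sigma}_{q,\infty}$, the admissibility ($m\le q$) of which is exactly the condition $\frac d{p_1}+\frac d{p_2}-d\le\sigma$.

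\emph{Main obstacle.} There is no conceptual difficulty: the proof is a bookkeeping exercise, and the real content is that the five hypotheses of the statement are precisely what make each Bernstein inequality run in the admissible direction and the powers of $2^j$ telescope correctly. The one point demanding genuine care is the remainder in the third inequality, which sits on the borderline $\sigma_1+\sigma_2=0$: its treatment works only because one factor is measured with third index $1$, so that $\sum_j 2^{j\sigma}\|\ddj f\|_{L^{p_1}}$ is a true $\ell^1$-norm while $g$ supplies just $\sup_j 2^{-j\sigma}\|\ddj g\|_{L^{p_2}}<\infty$; this is the subtlety that distinguishes it from the (easier) case $\sigma_1+\sigma_2>0$.
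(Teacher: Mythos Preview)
Your proof is correct and follows essentially the same route as the paper: Bony's decomposition $fg=T_fg+T_gf+R(f,g)$, paraproduct continuity combined with the embedding $\dot B^{\sigma_i}_{p_i,1}\hookrightarrow L^{q_i}$ (or the negative-index variant when $\sigma_2<0$), and the remainder estimate $R:\dot B^{\sigma_1}_{p_1,1}\times\dot B^{\sigma_2}_{p_2,*}\to\dot B^{\sigma_1+\sigma_2}_{m,*}$ followed by embedding into the target space. The paper makes the $\sigma_2\ge0$ versus $\sigma_2<0$ split for $T_gf$ more explicit than you do, and defers the algebra property to a reference rather than writing it out, but the substance is identical, including your observation that the borderline remainder in the third item works thanks to the $\ell^1$ summability carried by $\dot B^{\sigma}_{p_1,1}$.
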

\begin{proof}
The first inequality is classical (see  e.g. \cite{BCD}, Chap. 2).
For proving the second item, we need the following so-called Bony decomposition  for the product of two tempered
distributions $f$ and $g$:
\begin{equation}\label{eq:bony}
fg=T_fg+R(f,g)+T_{g}f,
\end{equation}
where  the \emph{paraproduct} between  $f$ and $g$ is defined by
$$T_fg:=\sum_j \dot S_{j-1} f\ddj g\quad\hbox{with }\ \dot S_{j-1}\triangleq\chi(2^{-(j-1)}D),$$
and  the \emph{remainder} $R(f,g)$ is given by the series:
$$
R(f,g):=\sum_j \ddj f\, (\dot\Delta_{j-1}g+\ddj g+\dot\Delta_{j+1}g\bigr).
$$
In the case $\sigma_2\geq0$ then we use  the embeddings $\dot B^{\sigma_1}_{p_1,1}\hookrightarrow L^{q_1}$ with
$\frac1{q_1}=\frac1{p_1}-\frac{\sigma_1}d$
and  $\dot B^{\sigma_2}_{p_2,1}\hookrightarrow L^{q_2}$ with
$\frac1{q_2}=\frac1{p_2}-\frac{\sigma_2}d$ and the fact that:
\begin{itemize}
\item $T$ maps $L^{q_1}\times \dot B^{\sigma_2}_{p_2,1}$ to $\dot B^{\sigma_2}_{q,1}$ with $\frac 1q=\frac1{q_1}+\frac{1}{p_2}
=\frac1{p_1}+\frac1{p_2}-\frac{\sigma_1}d$;
\item $T$ maps $L^{q_2}\times \dot B^{\sigma_1}_{p_1,1}$ to $\dot B^{\sigma_1}_{q_3,1}$ with $\frac 1{q_3}=\frac1{q_2}+\frac{1}{p_1}
=\frac1{p_1}+\frac1{p_2}-\frac{\sigma_2}d\cdotp$
\end{itemize}
As $\sigma_2\leq\sigma_1,$ we have $q_3\leq q,$ and thus
  $\dot B^{\sigma_1}_{q_3,1}\hookrightarrow \dot B^{\sigma_2}_{q,1}.$
Therefore the two paraproduct terms of \eqref{eq:bony} fulfill the desired inequality.

To bound the remainder term $R(f,g),$ we use the continuity result
$\dot B^{\sigma_1}_{p_1,1}\times\dot B^{\sigma_2}_{p_2,1}\to \dot B^{\sigma_1+\sigma_2}_{p,1}$
with $\frac1p=\frac1{p_1}+\frac1{p_2},$ and the embedding
$\dot B^{\sigma_1+\sigma_2}_{p,1}\hookrightarrow\dot B^{\sigma_2}_{q,1}.$

In the case $\sigma_2<0$ we use the fact that $T$ maps
$\dot B^{\sigma_2}_{p_2,1}\times\dot B^{\sigma_1}_{p_1,1}$ to $\dot B^{\sigma_1+\sigma_2}_{p,1}$
and, again, the embedding $\dot B^{\sigma_1+\sigma_2}_{p,1}\hookrightarrow\dot B^{\sigma_2}_{q,1}.$
\medbreak
Let us finally prove the last item.
To this end, we use the fact that both
$R$ and $T$ map $\dot B^{-\sigma}_{p_2,\infty}\times \dot B^{\sigma}_{p_1,1}$ to $\dot B^0_{p,\infty},$ with
$\frac1p=\frac1{p_1}+\frac 1{p_2}\cdotp$ As $\dot B^0_{p,\infty}$ continuously embeds in $\dot{B}^{-\sigma}_{q,\infty},$
the last two terms of \eqref{eq:bony} satisfy the desired inequality.
Regarding the first term in \eqref{eq:bony}, it suffices to notice that, as $0<\sigma\leq d/p_1,$ we have
 $\dot B^{\sigma}_{p_1,1}\hookrightarrow L^{q_1}$ with $\frac d{q_1}=\frac d{p_1}-\sigma,$
 and that $T$ maps $L^{q_1}\times \dot B^{-\sigma}_{p_2,\infty}$ to $\dot B^{-\sigma}_{q,\infty}.$
\end{proof}

To handle the case $p>d$ in the proof of Theorem \ref{thm2.1}, just resorting to the above proposition
does not allow to get suitable bounds for the low frequency part of some nonlinear terms.
We had to take advantage of the following result.
\begin{prop}\label{prop2.4b} Let $k_0\in\Z,$ and denote $z^\ell\triangleq\dot S_{k_0}z,$  $z^h\triangleq z-z^\ell$ and, for any $s\in\R,$
$$
\|z\|_{\dot B^s_{2,\infty}}^\ell\triangleq\sup_{k\leq k_0}2^{ks} \|\ddk z\|_{L^2}.
$$
There exists a universal integer $N_0$ such that  for any $2\leq p\leq 4$ and $\sigma>0,$ we have 
\begin{eqnarray}\label{eq:est1}
&&\|f g^h\|_{\dot B^{-s_0}_{2,\infty}}^\ell\leq C \bigl(\|f\|_{\dot B^\sigma_{p,1}}+\|\dot S_{k_0+N_0}f\|_{L^{p^*}}\bigr)\|g^h\|_{\dot B^{-\sigma}_{p,\infty}}\\\label{eq:est2}
&&\|f^h g\|_{\dot B^{-s_0}_{2,\infty}}^\ell
\leq C \bigl(\|f^h\|_{\dot B^\sigma_{p,1}}+\|(\dot S_{k_0+N_0}-\dot S_{k_0})f\|_{L^{p^*}}\bigr)\|g\|_{\dot B^{-\sigma}_{p,\infty}}
\end{eqnarray}
with  $s_0\triangleq \frac{2d}p-\frac d2$ and $\frac1{p^*}\triangleq\frac12-\frac1p,$
and $C$ depending only on $k_0,$ $d$ and $\sigma.$
\end{prop}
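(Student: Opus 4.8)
The plan is to derive both inequalities from Bony's decomposition \eqref{eq:bony} of the product, combined with careful bookkeeping of Fourier supports in order to isolate the interactions that feed the low-frequency part of the product. The only analytic inputs are Bernstein's inequality, the elementary summation $\sum_{j'\le j}2^{\sigma j'}\lesssim 2^{\sigma j}$ (this is where $\sigma>0$ enters), and the endpoint H\"older inequality $\|GH\|_{L^2}\le\|G\|_{L^{p^*}}\|H\|_{L^p}$, which is exactly the content of $\frac1{p^*}+\frac1p=\frac12$. The integer $N_0$ will be fixed once and for all as a universal number, large enough to absorb the bounded cluster of ``overlap'' blocks created by the paraproduct operator. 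Note that $s_0=d(\frac2p-\frac12)\ge0$ since $p\le4$, so we have at our disposal the embedding $\dot B^0_{p/2,\infty}\hookrightarrow\dot B^{-s_0}_{2,\infty}$, valid because $p/2\le2$.

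For \eqref{eq:est1}, write $fg^h=T_fg^h+R(f,g^h)+T_{g^h}f$. In $R(f,g^h)$ and in $T_{g^h}f$ the factor $g^h$ enters only through blocks $\ddj g^h$ with $j\ge k_0-1$; these are genuine high--high interactions, and the elementary paraproduct and remainder estimates (in the spirit of the proof of Proposition \ref{prop2.4}), using $\sigma>0$ to sum the relevant series, give $\|R(f,g^h)\|_{\dot B^0_{p/2,\infty}}+\|T_{g^h}f\|_{\dot B^0_{p/2,\infty}}\lesssim\|f\|_{\dot B^\sigma_{p,1}}\|g^h\|_{\dot B^{-\sigma}_{p,\infty}}$, whence the claim after the embedding recalled above. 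The paraproduct $T_fg^h=\sum_j\dot S_{j-1}f\,\ddj g^h$ is the delicate one: since $\ddj g^h$ is spectrally supported in $\{|\xi|\sim 2^j\}$ with $j\ge k_0-1$, each summand is supported in $\{|\xi|\sim 2^j\}$, so its $\ddk$-component with $k\le k_0$ vanishes unless $j$ ranges over a bounded set of indices clustered around $k_0$; for those $j$ one bounds $\dot S_{j-1}f$ in $L^{p^*}$ by $\|\dot S_{k_0+N_0}f\|_{L^{p^*}}$ and $\ddj g^h$ in $L^p$ by $2^{\sigma j}\|g^h\|_{\dot B^{-\sigma}_{p,\infty}}\lesssim\|g^h\|_{\dot B^{-\sigma}_{p,\infty}}$, and the endpoint H\"older inequality closes the estimate, the finitely many factors $2^{\sigma k_0}$, $2^{-s_0k_0}$ being absorbed into $C=C(k_0,d,\sigma)$.

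For \eqref{eq:est2}, decompose $f^hg=T_{f^h}g+R(f^h,g)+T_gf^h$. The remainder $R(f^h,g)=\sum_j\ddj f^h(\dot\Delta_{j-1}g+\ddj g+\dot\Delta_{j+1}g)$ again only involves $\ddj f^h$ with $j\ge k_0-1$; summing as before one gets $\|R(f^h,g)\|_{\dot B^0_{p/2,\infty}}\lesssim\|f^h\|_{\dot B^\sigma_{p,1}}\|g\|_{\dot B^{-\sigma}_{p,\infty}}$ and concludes with the embedding. In $T_gf^h$ the low-frequency output involves only finitely many indices $j$ near $k_0$, and for those the block $\ddj f^h$ is spectrally supported in $[2^{k_0},2^{k_0+N_0}]$, hence is bounded in $L^{p^*}$ by $\|(\dot S_{k_0+N_0}-\dot S_{k_0})f\|_{L^{p^*}}$, while $\dot S_{j-1}g$ is bounded in $L^p$ by $2^{\sigma j}\|g\|_{\dot B^{-\sigma}_{p,\infty}}\lesssim\|g\|_{\dot B^{-\sigma}_{p,\infty}}$ (here $\sigma>0$ is used to sum $\sum_{j'\le j-2}\|\dot\Delta_{j'}g\|_{L^p}$). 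The same reasoning, with $\dot S_{j-1}f^h$ in place of $\ddj f^h$ and $\ddj g$ in place of $\dot S_{j-1}g$, handles $T_{f^h}g$, and H\"older's inequality finishes the proof.

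The point that requires care is the frequency bookkeeping: one has to check that the ``low $\times$ overlap'' pieces of the paraproducts really involve only a universal, $p$- and $\sigma$-independent number of dyadic blocks located in $[2^{k_0},2^{k_0+N_0}]$, so that $N_0$ may be chosen once and for all; the high--high pieces, by contrast, are controlled by the available product laws together with the elementary embedding $\dot B^0_{p/2,\infty}\hookrightarrow\dot B^{-s_0}_{2,\infty}$ and present no real difficulty.
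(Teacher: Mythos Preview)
Your proof is correct and follows the same approach as the paper: Bony's decomposition, standard paraproduct/remainder continuity in $\dot B^0_{p/2,\infty}$ for the high--high pieces (followed by the embedding into $\dot B^{-s_0}_{2,\infty}$), and a finite-overlap argument near $k=k_0$ for the delicate paraproduct term, closed via H\"older $L^{p^*}\times L^p\to L^2$. The only cosmetic difference is that for \eqref{eq:est2} the paper treats $T_gf^h$ by the standard continuity result (since $-\sigma<0$) and reserves the overlap argument for $T_{f^h}g$, whereas you run the overlap argument on both paraproduct terms; either way works and yields the same bound.
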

\begin{proof}
To prove the first inequality, we start with Bony's decomposition:
$$
fg^h= T_{g^h}f+R(g^h,f)+T_{f}g^h.
$$
As $-\sigma<0,$ the first two terms are in $\dot B^0_{p/2,\infty}$ and thus
in $\dot B^{-s_0}_{2,\infty}$ by embedding. Moreover,
$$
\|T_{g^h}f+R(g^h,f)\|_{\dot B^{-s_0}_{2,\infty}}\lesssim \|f\|_{\dot B^\sigma_{p,1}}\|g^h\|_{\dot B^{-\sigma}_{p,\infty}}.
$$
To handle the last term, we notice that the definition of the spectral  truncation operators $\ddk$ and $\dot S_k$ implies that
$\ddk g^h\equiv0$ if $k<k_0-1.$ As in addition $\dot\Delta_{k'}(\dot S_{k-1}f\,\ddk g^h)\equiv0$ if $|k-k'|>4,$
we deduce that
$$\begin{aligned}
\|T_fg^h\|^\ell_{\dot B^{-s_0}_{2,\infty}} &= \sup_{k\leq k_0}2^{-ks_0} \biggl\|\ddk\biggl(\sum_{k'\geq k_0-1} \dot S_{k'-1}f\,\dot\Delta_{k'}g^h\biggr)\biggr\|_{L^2}\\
&\leq C 2^{-k_0s_0} \sup_{k_0-1\leq k'\leq k_0+4} \|\dot S_{k'-1} f\|_{L^{p^*}} \|\dot\Delta_{k'}g^h\|_{L^p}.\end{aligned}
$$
This gives \eqref{eq:est1}.
\medbreak
Proving the second inequality is similar. On one hand, as above, we have
$$
\|T_{g}f^h+R(g,f^h)\|_{\dot B^{-s_0}_{2,\infty}}\lesssim \|f^h\|_{\dot B^\sigma_{p,1}}\|g\|_{\dot B^{-\sigma}_{p,\infty}}.
$$
On the other hand, owing to the definition of $f^h$ and of $\dot S_{k'-1},$ 
$$
T_{f^h}g=\sum_{k'\geq k_0+1} \dot S_{k'-1}f^h\,\dot\Delta_{k'} g
$$
and thus  $\ddk T_{f^h}g\equiv0$ for $k<k_0-4.$
\medbreak
Therefore we have
$$\|T_{f^h}g\|_{\dot B^{-s_0}_{2,\infty}}^\ell\lesssim \|(\dot S_{k_0+4}-\dot S_{k_0})f\|_{L^{p^*}}\sum_{|k-k_0|\leq4}
\|\ddk g\|_{L^p},
$$
whence \eqref{eq:est2}.
\end{proof}

System \eqref{R-E3} also involves compositions of functions
(through $I(a),$ $\wt \lambda(a),$ $\wt\mu(a)$ and $G'(a)$) that
are  bounded thanks to the following classical result:
\begin{prop}\label{prop2.6}
Let $F:\R\rightarrow\R$ be  smooth
with $F(0)=0.$
For  all  $1\leq p,r\leq\infty$ and  $\sigma>0$ we have
$F(f)\in \dot B^\sigma_{p,r}\cap L^\infty$  for  $f\in \dot B^\sigma_{p,r}\cap L^\infty,$  and
\begin{eqnarray*}
\|F(f)\|_{\dot B^\sigma_{p,r}}\leq C\|f\|_{\dot B^\sigma_{p,r}}
\end{eqnarray*}
with $C$ depending only on $\|f\|_{L^\infty},$ $F'$ (and higher derivatives),  $\sigma,$ $p$ and $d.$
\end{prop}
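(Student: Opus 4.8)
The plan is to combine a telescoping decomposition of $F\circ f$ (\`a la Meyer) with Bernstein's inequality and Young's convolution inequality for sequences. The $L^\infty$ bound is immediate: since $F$ is smooth with $F(0)=0,$ one has $|F(f)|\le\|F'\|_{L^\infty([-M,M])}|f|$ with $M\triangleq\|f\|_{L^\infty},$ so all the work lies in the Besov estimate. First I would set up the decomposition. Because $f$ satisfies \eqref{eq:Sh} and $F(0)=0,$ the partial sums $\sum_{j=-J'}^{J}m_j\,\ddj f=F(\dot S_{J+1}f)-F(\dot S_{-J'}f)$ converge in $\cS'$ as $J,J'\to+\infty$ — the low end because $\|\dot S_{-J'}f\|_{L^\infty}\to0$ forces $F(\dot S_{-J'}f)\to0$ in $L^\infty,$ the high end because $\sigma>0$ makes $f-\dot S_{J+1}f\to0$ in $L^p$ — so that
$$
F(f)=\sum_{j\in\Z}m_j\,\ddj f,\qquad
m_j\triangleq\int_0^1F'\bigl(\dot S_jf+\tau\,\ddj f\bigr)\,d\tau,
$$
using that $\dot S_{j+1}f-\dot S_jf=\ddj f.$

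Next I would record uniform bounds on the multipliers $m_j.$ Since $\|\dot S_jf\|_{L^\infty}+\|\ddj f\|_{L^\infty}\lesssim M,$ one gets $\|m_j\|_{L^\infty}\le C$ with $C=C(M,F').$ Moreover $\dot S_jf$ and $\ddj f$ are spectrally supported in a ball of radius $\lesssim 2^j,$ so Bernstein's inequality yields $\|\nabla^N\dot S_jf\|_{L^\infty}+\|\nabla^N\ddj f\|_{L^\infty}\lesssim 2^{jN}M$ for every $N\in\N,$ and the Fa\`a di Bruno formula then gives $\|\nabla^Nm_j\|_{L^\infty}\le C_N\,2^{jN},$ with $C_N$ depending only on $M$ and on $F',\dots,F^{(N+1)}.$ Here the full smoothness of $F$ (not merely $C^1$) is used.

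The heart of the proof is the estimate for $\ddk F(f)=\sum_j\ddk(m_j\,\ddj f),$ which I would obtain by splitting the sum at $j=k-N_0,$ with $N_0$ a fixed integer depending only on the supports of $\chi$ and $\varphi.$ For $j\ge k-N_0$ the crude bound $\|\ddk(m_j\,\ddj f)\|_{L^p}\le\|m_j\|_{L^\infty}\|\ddj f\|_{L^p}\lesssim\|\ddj f\|_{L^p}$ suffices. For $j<k-N_0$ one observes that $\dot S_{k-1}m_j\cdot\ddj f$ has Fourier transform supported in $\{|\xi|<3\cdot2^{k-2}\},$ hence away from the support of $\varphi(2^{-k}\cdot),$ so that $\ddk(m_j\,\ddj f)=\ddk\bigl((\Id-\dot S_{k-1})m_j\cdot\ddj f\bigr);$ combining this with $\|(\Id-\dot S_{k-1})m_j\|_{L^\infty}\lesssim 2^{-Nk}\|\nabla^Nm_j\|_{L^\infty}\lesssim C_N\,2^{N(j-k)}$ gives $\|\ddk(m_j\,\ddj f)\|_{L^p}\lesssim C_N\,2^{N(j-k)}\|\ddj f\|_{L^p}.$

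Finally, writing $c_j\triangleq 2^{j\sigma}\|\ddj f\|_{L^p}$ so that $\|(c_j)_j\|_{\ell^r(\Z)}=\|f\|_{\dot B^\sigma_{p,r}},$ multiplying the two bounds above by $2^{k\sigma}$ shows that $\bigl(2^{k\sigma}\|\ddk F(f)\|_{L^p}\bigr)_k$ is dominated by the convolution of $(c_j)_j$ with the sequence $2^{m\sigma}\mathbf 1_{\{m\ge-N_0\}}$ (summable because $\sigma>0$), plus its convolution with $C_N\,2^{m(\sigma-N)}\mathbf 1_{\{m>N_0\}}$ (summable once $N>\sigma$); Young's inequality for sequence convolution then yields $\|F(f)\|_{\dot B^\sigma_{p,r}}\le C\|f\|_{\dot B^\sigma_{p,r}}$ with $C$ depending only on $M,$ $\sigma,$ $d$ and finitely many derivatives of $F.$ The only real obstacle is the low-frequency regime $j<k-N_0$ when $\sigma\ge1$: a single gain of regularity on $m_j$ is then no longer summable, and one must genuinely exploit $N>\sigma$ derivatives of $m_j$ — which is exactly why the hypothesis asks $F$ to be smooth. (For $0<\sigma<1$ there is the well-known shortcut using the characterization of $\dot B^\sigma_{p,r}$ by first-order differences, since $\|F(f)(\cdot+y)-F(f)\|_{L^p}\le\|F'\|_{L^\infty([-M,M])}\|f(\cdot+y)-f\|_{L^p};$ but this does not extend to $\sigma\ge1.$)
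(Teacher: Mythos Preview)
The paper does not prove Proposition~\ref{prop2.6}; it is stated in the Appendix as a ``classical result'' with an implicit reference to \cite{BCD}, Chap.~2. Your argument is precisely the standard proof (Meyer's first linearization method), and it is correct: the telescoping identity $F(f)=\sum_j m_j\,\ddj f$, the Bernstein--Fa\`a~di~Bruno bound $\|\nabla^N m_j\|_{L^\infty}\lesssim 2^{jN}$, the cancellation $\ddk(\dot S_{k-1}m_j\cdot\ddj f)=0$ for $j\ll k$, and the final Young-for-$\ell^r$ step are all sound.

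One small slip: in the last paragraph, with the convention $m=k-j$ implicit in your second kernel $C_N\,2^{m(\sigma-N)}\mathbf 1_{\{m>N_0\}}$, the near-diagonal piece $j\ge k-N_0$ corresponds to $m\le N_0$, so the first kernel should read $2^{m\sigma}\mathbf 1_{\{m\le N_0\}}$ rather than $\mathbf 1_{\{m\ge -N_0\}}$. This is the kernel that is genuinely in $\ell^1$ when $\sigma>0$, as you correctly annotate; the version you wrote is not summable. The rest of the argument is unaffected.
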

Let us now  recall the following classical \emph{Bernstein inequality}:
\begin{equation}\label{eq:B1}
\|D^kf\|_{L^{b}}
\leq C^{1+k} \lambda^{k+d(\frac{1}{a}-\frac{1}{b})}\|f\|_{L^{a}}
\end{equation}
that holds  for all function $f$ such that
$\mathrm{Supp}\,\mathcal{F}f\subset \{\xi\in \mathbb{R}^{d}: |\xi|\leq R\lambda\}$ for some $R>0$
and $\lambda>0,$  if  $k\in\N$ and  $1\leq a\leq b\leq\infty$.
\medbreak
More generally, if assume  $f$ to satisfy  $\mathrm{Supp}\,\mathcal{F}f\subset \{\xi\in \mathbb{R}^{d}:
R_{1}\lambda\leq|\xi|\leq R_{2}\lambda\}$ for some $0<R_1<R_2$  and $\lambda>0,$
then for any smooth  homogeneous of degree $m$
function $A$ on $\R^d\setminus\{0\}$ and   $1\leq a\leq\infty,$ we have
(see e.g. Lemma 2.2 in \cite{BCD}):
 \begin{equation}\label{eq:B2}
\|A(D)f\|_{L^{a}}\lesssim\lambda^{m}\|f\|_{L^{a}}.
\end{equation}
An obvious  consequence of \eqref{eq:B1} and \eqref{eq:B2} is that
$\|D^kf\|_{\dot{B}^s_{p, r}}\thickapprox \|f\|_{\dot{B}^{s+k}_{p, r}}$ for all $k\in\N.$
\medbreak
We  also need the following nonlinear generalization of \eqref{eq:B2} (see Lemma 8 in \cite{D1}):
\begin{prop}\label{prop2.3bis}
If $\mathrm{Supp}\,\mathcal{F}f\subset \{\xi\in \mathbb{R}^{d}:
R_{1}\lambda\leq|\xi|\leq R_{2}\lambda\}$ then there exists $c$ depending only on $d,$ $R_1$  and $R_2$
so that for all $1<p<\infty,$
$$c\lambda^2\biggl(\frac{p-1}p\biggr)\int_{\R^d}|f|^p\,dx\leq (p-1)\int_{\R^d}|\nabla f|^2|f|^{p-2}\,dx
=-\int_{\R^d}\Delta f\: |f|^{p-2}f\,dx.
$$
\end{prop}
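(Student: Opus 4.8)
First I would dispose of the stated equality $-\int_{\R^d}\Delta f\,|f|^{p-2}f\,dx=(p-1)\int_{\R^d}|\nabla f|^2|f|^{p-2}\,dx$, which is a plain integration by parts: since $\Supp\cF f$ is compact, $f$ is smooth, real-analytic (so $\{f=0\}$ is Lebesgue-negligible unless $f\equiv0$, a trivial case) and, together with its derivatives, lies in every $L^q$ with $q\ge p$ and vanishes at infinity; hence $\nabla(|f|^{p-2}f)=(p-1)|f|^{p-2}\nabla f$ a.e. and the boundary terms disappear. So the whole content of the proposition is the lower bound for $(p-1)\int_{\R^d}|\nabla f|^2|f|^{p-2}\,dx$.

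The heart of the argument is to convert the spectral localization of $f$ into an exact formula expressing $f$ through $\nabla f$. I would fix $\phi\in C_c^\infty(\R^d\setminus\{0\})$ with $\phi\equiv1$ on $\{R_1\le|\xi|\le R_2\}$, so that $f=\phi(\lambda^{-1}D)f$, and write $1=\sum_j\xi_j^2/|\xi|^2$ on $\Supp\phi(\lambda^{-1}\cdot)$ to obtain
\[
\lambda f=\sum_{j=1}^d\psi_j(\lambda^{-1}D)\,\d_j f=\sum_{j=1}^d\d_j\bigl(\psi_j(\lambda^{-1}D)f\bigr),\qquad \psi_j(\zeta)\triangleq-i\,\zeta_j|\zeta|^{-2}\phi(\zeta)\in C_c^\infty(\R^d\setminus\{0\}),
\]
the second form using that $\psi_j(\lambda^{-1}D)$ commutes with $\d_j$. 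Testing this against $|f|^{p-2}f$, integrating by parts and using $\d_j(|f|^{p-2}f)=(p-1)|f|^{p-2}\d_j f$ gives $\lambda\int|f|^p\,dx=-(p-1)\sum_j\int(\psi_j(\lambda^{-1}D)f)\,|f|^{p-2}\,\d_j f\,dx$, and Cauchy--Schwarz (splitting $|f|^{p-2}=|f|^{(p-2)/2}|f|^{(p-2)/2}$, first in $x$, then in $j$) yields
\[
\lambda\int_{\R^d}|f|^p\,dx\le(p-1)\,Y^{1/2}\Bigl(\int_{\R^d}|\nabla f|^2|f|^{p-2}\,dx\Bigr)^{1/2},\qquad Y\triangleq\sum_{j=1}^d\int_{\R^d}|\psi_j(\lambda^{-1}D)f|^2\,|f|^{p-2}\,dx.
\]
It remains to bound $Y$ by a constant times $\int|f|^p\,dx$: writing $\psi_j(\lambda^{-1}D)g=(\lambda^d K_j(\lambda\cdot))\star g$ with $K_j=\cF^{-1}\psi_j\in\cS(\R^d)$, one has the pointwise bound $|\psi_j(\lambda^{-1}D)f|\le\Phi_\lambda\star|f|$ with $\Phi\triangleq\sum_j|K_j|\in L^1$, $\Phi\ge0$, $\Phi_\lambda=\lambda^d\Phi(\lambda\cdot)$; Cauchy--Schwarz on the convolution gives $\sum_j|\psi_j(\lambda^{-1}D)f|^2\le C\,\Phi_\lambda\star|f|^2$, and transferring the convolution together with Hölder with exponents $(p/2,\,p/(p-2))$ (this is where $p\ge2$ enters; the range $1<p<2$, not needed in the body of the paper, is a routine duality variant) leads to $Y\le C'\|f\|_{L^p}^p$. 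Feeding this back into the previous display, dividing by $(\int|f|^p)^{1/2}$ and squaring, I get $\lambda^2\int|f|^p\,dx\le C''(p-1)^2\int|\nabla f|^2|f|^{p-2}\,dx$, which is the assertion with $c=p/(C''(p-1)^2)$.

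The step I expect to be delicate is the $p$-uniformity of the constant asserted in the statement. The crude bound $Y\lesssim\int|f|^p$ used above is lossy by a factor $p$ — the sharp inequality is $Y\lesssim p^{-1}\int|f|^p$ — because $\psi_j(\lambda^{-1}D)f$, which is essentially $\lambda\,\d_j(-\Delta)^{-1}f$, nearly vanishes exactly where $|f|$, hence the weight $|f|^{p-2}$, is largest. Quantifying this cancellation (equivalently, proving $\|\nabla(\sgn(f)|f|^{p/2})\|_{L^2}^2\gtrsim p\lambda^2\|f\|_{L^p}^p$ with a $p$-independent implicit constant) is the real work behind the weights $\tfrac{p-1}{p}$ and $p-1$ in the statement; for the applications in the paper, however, $p$ is a fixed exponent subject to \eqref{eq:p}, so the $p$-dependent constant obtained above is already enough.
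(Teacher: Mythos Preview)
The paper does not prove this proposition; it simply cites it as Lemma~8 of \cite{D1}. Your argument is correct and self-contained for $p\ge2$: the identity $\lambda f=\sum_j\d_j(\psi_j(\lambda^{-1}D)f)$, the integration by parts against $|f|^{p-2}f$, the Cauchy--Schwarz split, and the bound $Y\lesssim\|f\|_{L^p}^p$ via Young's inequality and H\"older with exponents $(p/2,\,p/(p-2))$ are all valid. As you honestly note, this produces a constant $c$ behaving like $(p-1)^{-1}$ for large $p$ rather than the $p$-independent constant asserted, and you are right that for the paper's purposes---$p$ fixed in $[2,\min(4,2d/(d-2))]$---this is entirely sufficient.

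Two small caveats. First, calling the range $1<p<2$ a ``routine duality variant'' is optimistic: your H\"older step collapses there (the exponent $p/(p-2)$ is negative) and the weight $|f|^{p-2}$ becomes singular near the zeros of $f$, so bounding $Y$ needs a genuinely different argument; since the paper never uses $p<2$ this does not affect anything. Second, the sharp $p$-uniform factor $(p-1)/p$---equivalently, as you correctly reformulate it, $\|\nabla(|f|^{p/2-1}f)\|_{L^2}^2\gtrsim p\,\lambda^2\|f\|_{L^p}^p$ with an implicit constant independent of $p$---does require the additional cancellation you point to and is the actual content of the cited lemma; if you want the full statement rather than only what the paper needs, consult \cite{D1} directly.
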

A time-dependent version of  the following  commutator estimate has been used  in the second step of the proof of Theorem \ref{thm2.1}.
\begin{prop}\label{prop2.5}
Let  $1\leq p, p_{1}\leq\infty$ and
\begin{equation}\label{eq:cond}-\min\biggl(\frac{d}{p_{1}},\frac{d}{p'}\biggr)<\sigma\leq1+\min\biggl(\frac dp,\frac{d}{p_{1}}\biggr)\cdotp\end{equation}
There exists a
constant $C>0$ depending only on $\sigma$ such that for all $j\in\Z$ and $\ell\in\{1,\cdots,d\},$ we have
\begin{equation}\label{eq:com}\|[v\cdot\nabla,\d_\ell\dot{\Delta}_{j}]a\|_{L^{p}}\leq
Cc_{j}2^{-j(\sigma-1)}\|\nabla v\|_{\dot{B}^{\frac{d}{p_{1}}}_{p_{1},1}}\|\nabla a\|_{\dot{B}^{\sigma-1}_{p,1}},
\end{equation}
 where the commutator
$[\cdot,\cdot]$ is defined by $[f,g]=fg-gf$ and $(c_{j})_{j\in\Z}$ denotes
a sequence such that $\|(c_{j})\|_{ {\ell^{1}}}\leq 1$.
\end{prop}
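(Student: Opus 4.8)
The plan is to establish \eqref{eq:com} by means of Bony's decomposition \eqref{eq:bony}, applied in a way that preserves the commutator cancellation. Recalling that $[v\cdot\nabla,\d_\ell\ddj]a=(v\cdot\nabla)(\d_\ell\ddj a)-\d_\ell\ddj(v\cdot\nabla a)$, write $v\cdot\nabla=\sum_{m=1}^d v^m\d_m$; decomposing both $v^m\,\d_m\d_\ell\ddj a$ and $\d_\ell\ddj(v^m\d_m a)$ by \eqref{eq:bony} and subtracting (using that $\d_\ell$ and $\ddj$ commute, so $\d_m\d_\ell\ddj a=\d_\ell\ddj(\d_m a)$), one obtains $[v\cdot\nabla,\d_\ell\ddj]a=\sum_{m=1}^d\bigl(R_j^{1,m}+R_j^{2,m}+R_j^{3,m}+R_j^{4,m}+R_j^{5,m}\bigr)$, where $R_j^{1,m}=[T_{v^m},\d_\ell\ddj]\d_m a$ is the only genuine commutator, $R_j^{2,m}=T_{\d_m\d_\ell\ddj a}v^m$, $R_j^{3,m}=-\d_\ell\ddj T_{\d_m a}v^m$, $R_j^{4,m}=R(v^m,\d_\ell\ddj\d_m a)$ and $R_j^{5,m}=-\d_\ell\ddj R(v^m,\d_m a)$. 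Keeping $R_j^{1,m}$ grouped is crucial, since $\dot B^{d/p_1}_{p_1,1}$ controls $\nabla v$ but not $v$ itself.

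The heart of the proof is $R_j^{1,m}$. Since $T_{v^m}g=\sum_{j'}\dot S_{j'-1}v^m\,\dot\Delta_{j'}g$ and $\dot\Delta_{j''}(\dot S_{j'-1}v^m\,\dot\Delta_{j'}g)\equiv0$ for $|j''-j'|>4$, one reduces to $R_j^{1,m}=\sum_{|j'-j|\le4}[\dot S_{j'-1}v^m,\d_\ell\ddj]\dot\Delta_{j'}\d_m a$. Writing $\d_\ell\ddj$ as convolution against the kernel $2^{j(d+1)}(\d_\ell h)(2^j\cdot)$ and inserting into that convolution the first-order Taylor identity $\dot S_{j'-1}v^m(x)-\dot S_{j'-1}v^m(y)=(x-y)\cdot\int_0^1\nabla\dot S_{j'-1}v^m(y+t(x-y))\,dt$, the factor $|x-y|$ absorbs exactly one power of $2^j$ (since $\|\,|\cdot|\,\d_\ell h\|_{L^1}<\infty$), and Young's inequality gives $\|[\dot S_{j'-1}v^m,\d_\ell\ddj]\dot\Delta_{j'}\d_m a\|_{L^p}\lesssim\|\nabla\dot S_{j'-1}v^m\|_{L^\infty}\|\dot\Delta_{j'}\nabla a\|_{L^p}$; note that only $\nabla\dot S_{j'-1}v^m$, never $\dot S_{j'-1}v^m$ itself, occurs. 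Since $\dot B^{d/p_1}_{p_1,1}\hookrightarrow\dot B^0_{\infty,1}\hookrightarrow L^\infty$ bounds $\|\nabla\dot S_{j'-1}v^m\|_{L^\infty}$ by $\|\nabla v\|_{\dot B^{d/p_1}_{p_1,1}}$ uniformly in $j'$, and by definition $\|\dot\Delta_{j'}\nabla a\|_{L^p}=c_{j'}2^{-j'(\sigma-1)}\|\nabla a\|_{\dot B^{\sigma-1}_{p,1}}$ with $(c_{j'})\in\ell^1$ of norm $\le1$, summing over $|j'-j|\le4$ yields the required estimate for $R_j^{1,m}$, for every $\sigma\in\R$. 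This is where the commutator structure is decisive: estimating $T_{v^m}(\d_m\d_\ell\ddj a)$ and $\d_\ell\ddj T_{v^m}(\d_m a)$ separately would demand $v^m\in L^\infty$ and would lose the gain of one derivative encoded in the power $2^{-j(\sigma-1)}$.

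The remaining four pieces are handled by routine Littlewood--Paley bookkeeping. In each of $R_j^{2,m},\dots,R_j^{5,m}$ the velocity appears only through a single block $\dot\Delta_{j'}v^m$, which by \eqref{eq:B2} equals $2^{-j'}$ times a frequency-localized part of $\nabla v$, so that after Bernstein's inequalities \eqref{eq:B1}--\eqref{eq:B2} only $\|\nabla v\|_{\dot B^{d/p_1}_{p_1,1}}$ enters, the $a$-factor being controlled by $\|\nabla a\|_{\dot B^{\sigma-1}_{p,1}}$ (adjusting Lebesgue exponents by Bernstein when pairing against $L^{p_1}$); the extra powers of $2^j$ carried by the second-order derivatives of $a$, or by the outer $\d_\ell\ddj$, are balanced by the $2^{-j'}$ coming from $\nabla v$ with $j'$ comparable to $j$. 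Spectral localization forces the inner index to obey $|j'-j|\le N_0$ for $R_j^{3,m},R_j^{4,m}$ and $j'\ge j-N_0$ for $R_j^{2,m},R_j^{5,m}$. The two-sided condition \eqref{eq:cond} on $\sigma$ enters exactly here: the upper bound $\sigma\le1+\min(d/p,d/p_1)$ allows a choice of Lebesgue exponent making the series $\sum_{k\le j'}$ produced by the low-frequency truncation $\dot S_{j'-1}\d_m a$ in $R_j^{3,m}$ converge and be dominated by its top term, while the lower bound $\sigma>-\min(d/p_1,d/p')$ makes the tail series $\sum_{j'\ge j-N_0}$ in $R_j^{5,m}$ converge. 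The summable sequence required in \eqref{eq:com} is then read off directly from the relevant single dyadic block, except for $R_j^{5,m}$, for which $\ell^1$-summability is recovered by an elementary discrete convolution estimate. Summing the five contributions and then over $m$ gives \eqref{eq:com}. The one genuinely delicate step is the treatment of $R_j^{1,m}$; the rest is bookkeeping, the only mild subtlety being the convolution argument for the remainder term.
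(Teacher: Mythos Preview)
Your proposal is correct and follows essentially the same approach as the paper's own proof: the identical five-term Bony decomposition \eqref{eq:Rj}, the mean-value/Taylor argument for the paraproduct commutator $[T_{v^m},\d_\ell\ddj]\d_m a$ yielding the crucial gain of $2^{-j}$ and the appearance of $\nabla v$ rather than $v$, and standard paraproduct/remainder continuity for the remaining four pieces under Condition \eqref{eq:cond}. Your discussion is in fact slightly more explicit than the paper's about which half of \eqref{eq:cond} is used for which term; the only cosmetic discrepancy is that the paper also invokes a (trivial) series convolution for $R_j^{2,m}$, whereas you reserve that remark for $R_j^{5,m}$.
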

\begin{proof} We just  sketch the proof as it  is very similar to that of Lemma 2.100 in \cite{BCD}.
Decomposing the two terms of $[v\cdot\nabla,\d_\ell\dot{\Delta}_{j}]a$ according to \eqref{eq:bony},
we see  that, with the summation convention over repeated indices:
\begin{multline}\label{eq:Rj}
[v\cdot\nabla,\d_\ell\dot{\Delta}_{j}]a= [T_{v^{k}},\d_\ell\dot\Delta_{j}]\partial_{k}a
+T_{\partial_{k}\d_\ell\dot\Delta_{j}a}v^{k}
\\-\d_\ell\dot\Delta_{j}T_{\partial_{k}a}v^{k}
+R(v^{k},\partial_k\d_\ell\dot\Delta_{j}a)
-\d_\ell\dot\Delta_{j}R(v^{k},\d_ka).
\end{multline}
To bound the  first term, we notice that owing to the properties of spectral localization of the Littlewood-Paley decomposition, we have
  $$ [T_{v^{k}},\d_\ell\dot\Delta_{j}]\partial_{k}a=2^{j}\sum_{|j-j'|\leq
4}[\dot S_{j'-1}v^{k},\wt\Delta_{j}^\ell]\partial_{k}\dot\Delta_{j'}a\quad\hbox{with}\quad
\wt\Delta_{j}^\ell\triangleq i(\xi^\ell\varphi)(2^{-j}D).
$$
Now, setting $h^\ell:=\cF^{-1}(i\xi^\ell\varphi),$ we get
$$
[\dot S_{j'-1}v^{k},\wt\Delta_{j}^\ell]\partial_{k}\dot\Delta_{j'}a(x) =
2^{jd}\!\int_{\R^d}\! h^\ell(2^j(x-y))
\bigl(\dot S_{j'-1}v^{k}(x)-\dot S_{j'-1}v^{k}(y)\bigr)\d_k\dot\Delta_{j'} a(y)\,dy.
$$
Hence using   the mean value formula and Bernstein inequalities yields
$$
\| [T_{v^{k}},\d_\ell\dot\Delta_{j}]\partial_{k}a\|_{L^p}\lesssim   \|{\nabla v}\|_{L^\infty}
\sum_{|j'-j|\leq4}\|{\dot\Delta_{j'}\nabla a}\|_{L^p}.$$
As $\dot B^{\frac d{p_1}}_{p_1,1}\hookrightarrow L^\infty,$ we get \eqref{eq:com} for that term.
\medbreak
Bounding the  third and last term in \eqref{eq:Rj}  follows from
standard results of continuity for the remainder and paraproduct operators.
Here we need  Condition \eqref{eq:cond}.
To estimate the second term of \eqref{eq:Rj},  let us write that
$$
T_{\partial_{k}\d_\ell\dot\Delta_{j}a}v^{k}=\sum_{j'\geq j-3}\dot S_{j'-1}\partial_k\d_\ell\ddj a\,\dot\Delta_{j'}v^k.
$$
Using Bernstein inequality, this yields
$$
\|T_{\partial_{k}\d_\ell\dot\Delta_{j}a}v^{k}\|_{L^p} \lesssim \sum_{j'\geq j-3}
2^{j-j'}\|\nabla\ddj a\|_{L^p}\,\|\nabla\dot\Delta_{j'}v\|_{L^\infty},
$$
and  convolution inequality for series and embedding thus ensures \eqref{eq:com}.
\medbreak
Finally, we have (because $\dot\Delta_{j'}\ddj=0$ if $|j'-j|>1$),
$$
R(v^{k},\partial_k\d_\ell\dot\Delta_{j}a)=\sum_{|j'-j|\leq1}  (\dot\Delta_{j'-1}\!+\!\dot\Delta_{j'}\!+\!\dot\Delta_{j'+1})v^k\:
\dot\Delta_{j'}\ddj\d_\ell\d_ka.
$$
Hence, by virtue of Bernstein inequality,
$$
\|R(v^{k},\partial_k\d_\ell\dot\Delta_{j}a)\|_{L^p} \lesssim  \sum_{|j'-j|\leq1} \|\dot\Delta_{j} \d_ka\|_{L^p} \|\nabla v^k\|_{L^\infty},
$$
which completes the proof of \eqref{eq:com}.
\end{proof}
\smallbreak
When localizing PDE's by means of  Littlewood-Paley decomposition,  one  naturally
ends up with  bounds for each dyadic block in spaces of type $L^{\rho}_{T}(L^{p})\triangleq L^\rho(0,T;L^p(\R^d)).$
To get an information on Besov norms, we then have to  perform a summation on $\ell^{r}(\mathbb{Z})$.
However, this does not quite yield a  bound for the norm in $L^{\rho}_{T}(\dot{B}^{\sigma}_{p,r})$, as the time integration has been performed \emph{before} the $\ell^{r}$ summation. This leads to the definition of
the following norms first introduced by J.-Y. Chemin in \cite{Chemin}  (see also \cite{ChL} for the particular case of Sobolev spaces)
for $0\leq T\leq+\infty,$ $\sigma\in\mathbb{R}$ and  $1\leq r,p,\rho\leq\infty$:
$$\|f\|_{\widetilde{L}^{\rho}_{T}(\dot{B}^{\sigma}_{p,r})}\triangleq\Big\|\bigl(2^{j\sigma}\|\dot{\Delta}_{j}f\|_{L^{\rho}_{T}(L^{p})}\bigr)\Big\|_{\ell^r(\Z)}.$$
For notational simplicity, index $T$ is omitted if $T=+\infty.$
\smallbreak
We also used the following functional space:
\begin{equation}\label{eq:tilde}\widetilde{\mathcal{C}}_{b}(\R_+;\dot{B}^{\sigma}_{p,r})\triangleq\bigl\{f\in \mathcal{C}(\R_+;\dot{B}^{\sigma}_{p,r})\ s.t.\
\|f\|_{\widetilde{L}^{\infty}(\dot{B}^{\sigma}_{p,r})}<\infty\bigr\}\cdotp
\end{equation}
The above norms  may be compared with those of the
more standard  Lebesgue-Besov spaces $L^{\rho}_{T}(\dot{B}^{\sigma}_{p,r})$ via
 Minkowski's inequality:
\begin{equation}\label{eq:mink}
\|f\|_{\widetilde{L}^{\rho}_{T}(\dot{B}^{\sigma}_{p,r})}\leq\|f\|_{L^{\rho}_{T}(\dot{B}^{\sigma}_{p,r})}\,\,\,
\mbox{if }\,\, r\geq\rho,\ \ \ \
\|f\|_{\widetilde{L}^{\rho}_{T}(\dot{B}^{\sigma}_{p,r})}\geq\|f\|_{L^{\rho}_{T}(\dot{B}^{\sigma}_{p,r})}\,\,\,
\mbox{if }\,\, r\leq\rho.
\end{equation}
The general principle is that all the properties of continuity for the product, commutators and composition which are true for Besov norms  extend to  the above norms:  the time exponent $\rho$ just behaves according to H\"{o}lder inequality.
\medbreak
Using the norms defined in \eqref{eq:tilde} leads to optimal regularity estimates for the heat equation,
as is recalled in the proposition below.
\begin{prop}\label{prop2.7}
Let $\sigma\in \mathbb{R}$, $(p,r)\in [1,\infty]^2$ and $1\leq \rho_{2}\leq\rho_{1}\leq\infty$.  Let $u$  satisfy
$$\left\{\begin{array}{lll}\d_tu-\mu\Delta u=f,\\
u_{|t=0}=u_0.\end{array}
\right.$$
Then  for all $T>0$ the following a priori estimate is fulfilled:
\begin{equation}\label{R-E14}\mu^{\frac1{\rho_1}}\|u\|_{\tilde L_{T}^{\rho_1}(\dot B^{\sigma+\frac2{\rho_1}}_{p,r})}\lesssim
\|u_0\|_{\dot B^\sigma_{p,r}}+\mu^{\frac{1}{\rho_2}-1}\|f\|_{\tilde L^{\rho_2}_{T}(\dot B^{\sigma-2+\frac2{\rho_2}}_{p,r})}.
\end{equation}
\end{prop}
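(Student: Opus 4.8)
The plan is to localize the equation in frequency, solve each dyadic block explicitly by Duhamel's formula, and then recombine — crucially performing the time integration \emph{block by block}, before the $\ell^r$ summation, which is precisely what the Chemin--Lerner norm is designed for. Applying $\ddj$ to the equation gives $\d_t\ddj u-\mu\Delta\ddj u=\ddj f$ with $\ddj u_{|t=0}=\ddj u_0$, hence
\begin{equation*}
\ddj u(t)=e^{\mu t\Delta}\ddj u_0+\int_0^t e^{\mu(t-\tau)\Delta}\ddj f(\tau)\,d\tau.
\end{equation*}

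The first ingredient is the standard smoothing estimate for the heat semigroup on spectrally localized functions: since $\ddj v$ has Fourier support in the annulus $\{3\cdot2^{j}/4\le|\xi|\le 8\cdot2^{j}/3\}$, a Mikhlin-type multiplier argument in the spirit of \eqref{eq:B2} (see also \cite{BCD}) provides a constant $c=c(d)>0$ such that $\|e^{\mu t\Delta}\ddj v\|_{L^p}\le C e^{-c\mu 2^{2j}t}\|\ddj v\|_{L^p}$ for all $t\ge0$. Plugging this into Duhamel, and using that $\ddj f$ is itself frequency-localized, I would obtain for every $j\in\Z$ and $t\in[0,T]$
\begin{equation*}
\|\ddj u(t)\|_{L^p}\le C e^{-c\mu 2^{2j}t}\|\ddj u_0\|_{L^p}+C\int_0^t e^{-c\mu 2^{2j}(t-\tau)}\|\ddj f(\tau)\|_{L^p}\,d\tau.
\end{equation*}

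Next I would take the $L^{\rho_1}(0,T)$ norm in time. For the data term this is immediate, since $\|e^{-c\mu 2^{2j}\cdot}\|_{L^{\rho_1}(0,T)}\lesssim \mu^{-1/\rho_1}2^{-2j/\rho_1}$. For the forcing term I would read the integral as the time convolution of $t\mapsto e^{-c\mu 2^{2j}t}\mathbf 1_{\{t\ge0\}}$ with $\tau\mapsto\|\ddj f(\tau)\|_{L^p}\mathbf 1_{[0,T]}$ and apply Young's inequality in time with exponents satisfying $1+\tfrac1{\rho_1}=\tfrac1q+\tfrac1{\rho_2}$ (admissible since $\rho_2\le\rho_1$), together with $\|e^{-c\mu 2^{2j}\cdot}\|_{L^q(0,\infty)}\lesssim(\mu2^{2j})^{-1/q}$, to get
\begin{equation*}
\|\ddj u\|_{L^{\rho_1}_T(L^p)}\lesssim \mu^{-1/\rho_1}2^{-2j/\rho_1}\|\ddj u_0\|_{L^p}+\mu^{-1/q}2^{-2j/q}\|\ddj f\|_{L^{\rho_2}_T(L^p)}.
\end{equation*}
Multiplying by $\mu^{1/\rho_1}2^{j(\sigma+2/\rho_1)}$ and using $\tfrac1q=1+\tfrac1{\rho_1}-\tfrac1{\rho_2}$ one checks that the data term contributes $2^{j\sigma}\|\ddj u_0\|_{L^p}$ and the forcing term contributes $\mu^{1/\rho_2-1}2^{j(\sigma-2+2/\rho_2)}\|\ddj f\|_{L^{\rho_2}_T(L^p)}$; taking the $\ell^r(\Z)$ norm then yields exactly \eqref{R-E14}.

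I do not expect any serious obstacle here: the heat-kernel estimate is classical and the rest is bookkeeping. The only point that requires care is structural rather than technical — the time integration must be carried out blockwise so that Young's inequality can exploit the exponential gain $e^{-c\mu 2^{2j}t}$ on the $j$-th block, and then the powers of $\mu$ and $2^j$ must be tracked so that they reorganize precisely into the norm $\|f\|_{\wt L^{\rho_2}_T(\dot B^{\sigma-2+2/\rho_2}_{p,r})}$ with prefactor $\mu^{1/\rho_2-1}$. The endpoint cases $\rho_1=\infty$ and $\rho_2=1$ (the ones relevant to the applications in the proof of Theorem \ref{thm2.1}) are covered without modification.
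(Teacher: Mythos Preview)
Your argument is correct and is exactly the standard proof of this maximal regularity estimate (frequency localization, heat-semigroup decay on each dyadic block, Duhamel, Young's inequality in time, then $\ell^r$ summation). The paper itself does not supply a proof of this proposition: it is stated in the appendix as a recalled result, with the implicit reference being \cite{BCD}, where precisely the argument you outline is carried out.
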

\begin{rem} The solutions to the following \emph{Lam\'e system}
\begin{equation}\label{eq:lame}
\left\{\begin{array}{lll}\d_tu-\mu\Delta u-(\lambda+\mu)\nabla\div u=f,\\
u_{|t=0}=u_0,\end{array}
\right.\end{equation}
where $\lambda$ and $\mu$ are constant coefficients such that $\mu>0$ and $\lambda+2\mu>0,$
also fulfill  \eqref{R-E14} (up to the dependence w.r.t.  the viscosity).
Indeed: both $\cP u$ and $\cQ u$ satisfy  a heat equation,
as may be easily seen by applying $\cP$ and $\cQ$ to \eqref{eq:lame}.
\end{rem}

\end{appendix}

\end{document}